\documentclass[12pt]{amsart}
\usepackage{dsfont}
\usepackage{mathrsfs} 
\usepackage{amssymb, changes}
\usepackage{amsmath}
\usepackage{amsfonts}
\usepackage{bbm}
\usepackage{amsthm}
\usepackage{amsbsy}
\usepackage{amsgen}
\usepackage{amscd}
\usepackage{amsopn}
\usepackage{amstext}
\usepackage{amsxtra}
\usepackage[english]{babel}
\usepackage[most]{tcolorbox}
\usepackage{xcolor}
\usepackage{times}
\usepackage[foot]{amsaddr}
\usepackage{hyperref}\hypersetup{
    colorlinks=true,
    linkcolor=purple,
    filecolor=blue,      
    urlcolor=cyan,
    }
   
\usepackage[margin=45pt]{geometry}

\tcolorboxenvironment{theorem}{
  enhanced,
  breakable,
  colback=gray!3,
  colframe=black!50!black,
  boxrule=0.8pt,
  arc=2mm,
  left=6pt,
  right=6pt,
  top=6pt,
  bottom=6pt,
  before skip=10pt,
  after skip=10pt
}

\tcolorboxenvironment{lemma}{
  enhanced,
  breakable,
  colback=gray!3,
  colframe=black!40!black,
  boxrule=0.8pt,
  arc=2mm,
  left=6pt,
  right=6pt,
  top=6pt,
  bottom=6pt,
  before skip=10pt,
  after skip=10pt
}

\tcolorboxenvironment{proposition}{
  enhanced,
  breakable,
  colback=gray!3,
  colframe=black!50!black,
  boxrule=0.8pt,
  arc=2mm,
  left=6pt,
  right=6pt,
  top=6pt,
  bottom=6pt,
  before skip=10pt,
  after skip=10pt
}

\newtheorem{theorem}{Theorem}[section]

\newtheorem{lemma}[theorem]{Lemma}

\newtheorem{proposition}[theorem]{Proposition}

\newtheorem{definition}{Definition}

\newcommand{\E}{\mathbb{E}}
\renewcommand{\P}{\mathbb{P}}

\def\Tcal{\mathcal T}

\newcommand{\X}{\mathbf{X }}

\newcommand{\R}{\mathbb{R}}
\newcommand{\Z}{\mathbb{Z}}
\newcommand{\N}{\mathbb{N}}
\def\eps{\varepsilon}
\newcommand{\rmd}{\,\mathrm{d}}

\def\0{\mathbf{0}}

\title{``True" self-avoiding walks on general trees}
\author{Tuan-Minh Nguyen}
\address[T.-M. N.]{School of Mathematical Sciences, Monash University, 9 Rainforest Walk, Clayton 3800, Victoria, Australia}
\email{tuanminh.nguyen@monash.edu}

\begin{document}
\begin{abstract}
We study the asymptotic behavior of ``true" self-avoiding random walks on general infinite locally finite trees. In this model, the walk starts at the root and, at each step, from its current vertex chooses a neighboring edge to traverse with probability proportional to the current weight of that edge, where the weight of each edge after being traversed $n$ times is given by $w(n)=\exp(-\beta n)$. We show that the process exhibits a sharp phase transition between recurrence and transience. The critical value is determined by the branching-ruin number of the tree, which coincides with the Hausdorff dimension of the boundary of the tree under a suitable metric. We prove that the walk is almost surely transient when the branching-ruin number is greater than $1/2$, and recurrent when it is less than $1/2$. This resolves an open question posed by Kosygina.
\end{abstract}

\keywords{branching-ruin number, phase transition, true self-avoiding walks, random walks on trees, polynomial trees} 
\subjclass{60K35, 60K37, 82D30}
\maketitle

\tableofcontents

\section{Introduction}

Let $\mathcal{T} = (V, E)$ be an infinite, locally finite tree with root $\rho \in V$.  
Let $\Z_+$ be the set of non-negative integers and $\N=\Z_{+}\setminus\{0\}.$ Fix $w: \mathbb{Z}_+ \to (0,\infty)$, which is called the weight function.  
Let $\X=(X_n)_{n \ge 0}$ be a discrete-time nearest-neighbor random walk taking values on $V$. The process starts at $X_0 = \rho$. For each undirected edge $\{x,y\} \in E$ and each time $n \ge 0$, let
\[
\mathsf L_n(x,y) = \sum_{k=0}^{n-1} \mathbf{1}_{\big\{\{X_k,X_{k+1}\}=\{x,y\}\big\}}.
\]
be the number of times the walk has crossed the edge $\{x,y\}$ (in either direction) up to time $n$.  For each $n$, let $\mathcal{F}_n$ be the $\sigma$-algebra generated by $X_0, X_1, \dots, X_n$. For two
adjacent vertices $x$ and $y$, we write $x\sim y$. Given $\mathcal{F}_n$ and that $X_n = x$, the conditional distribution of the next step is given by
\begin{align}\label{eq.trans}
   \mathbb{P}(X_{n+1} = y \mid \mathcal{F}_n) = \mathbf{1}_{\{y\sim x\}}\frac{w\bigl(\mathsf L_n(x,y)\bigr)}{\sum_{z \sim x} w\bigl(\mathsf L_n(x,z)\bigr)}. 
\end{align}
When $w(n) = \exp(-\beta n)$ for some $\beta > 0$, the process is called the \textbf{``true" self-avoiding walk} (TSAW). We say that the process $\mathbf X$ is
\begin{itemize}
    \item \textbf{recurrent} if every vertex of $\mathcal T$ is visited infinitely often;
    \item \textbf{transient} if every vertex of $\mathcal T$ is visited only finitely often.
\end{itemize}

The objective of the paper is to determine recurrence and transience for the
TSAW on trees of polynomial growth. The relevant geometric quantity is the
\textbf{branching-ruin number}, introduced in \cite{CKS2020}. For each edge $e=\{v^{-1},v\}\in E$, let $|e|=|v|$ be the distance from $v$ to $\rho$, i.e., the number of edges in the shortest path connecting $v$ and $\rho$. A \textbf{cutset} in $\mathcal{T}$ is a minimal set $\pi$ of edges that separates the root from infinity. That is, for any infinite self-avoiding path $(v_i)_{i\ge 0}$ with $v_0=\rho$, there exists a unique $i$ such that $\{v_i, v_{i+1}\} \in \pi$.
Let $\Pi$ be the set of all cutsets in $\mathcal{T}$.
 The branching-ruin number of $\Tcal$ is given by
\begin{equation}\label{brr2}
\text{br}_r(\mathcal{T}) = \sup\Big\{\gamma > 0: \inf_{\pi\in \Pi}\sum_{e\in\pi}|e|^{-\gamma} > 0\Big\}.
\end{equation}
This is the polynomial analogue of the branching number introduced by Lyons
\cite{Lyons1989} to measure trees with exponential growth.
Let $\partial \mathcal{T}$ stand for the \textbf{boundary} of $\mathcal{T}$, which is the set of all infinite self-avoiding paths starting from the root. We define the metric between any two infinite paths $\xi, \eta \in \partial \mathcal{T}$ whose last common edge is $e$ by
\begin{equation}\label{def.d}
    d(\xi, \eta) = 1/|e|.
\end{equation}
and $d(\xi,\eta)=1$ if they have no common edge. 
Note that ${\rm br}_r(\mathcal{T})$ is equal to the \textbf{Hausdorff dimension} of $\partial \mathcal{T}$ with respect to the metric $d$ (see Section 3.3 in \cite{CKS2020}).

A typical example of a tree with polynomial growth is the $\mathbb{Z}^d$-like tree $T_d$, defined as follows. A vertex has $d$ children if $|v|=2^k$ for some $k\in\mathbb{Z}_+$, and exactly one child otherwise. Then $\text{br}_r(T_d)=\log_2(d).$ 

In this paper, we establish a criterion of recurrence and transience for the TSAW on any infinite local finite tree $\mathcal{T}$ with respect to the branching-ruin number $\text{br}_r(\mathcal{T})$. The main result of this paper resolves a conjecture proposed by Kosygina \cite{ABHR2025} that TSAWs on trees with polynomial growth exhibit a phase transition between recurrence and transience.  

\begin{theorem}\label{thm:main}
The ``true" self-avoiding walk on $\Tcal$ is recurrent if ${\rm br}_r(\Tcal)<1/2$ and transient if ${\rm br}_r(\Tcal)>1/2$.
\end{theorem}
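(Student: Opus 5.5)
We follow the strategy of Collevecchio, Kious and Sidoravicius for once-reinforced walks on trees \cite{CKS2020}, replacing their reinforcement by the TSAW weights. The first ingredient is an extension, i.e. a Rubin-type construction. To each directed edge we attach independent exponential clocks in such a way that the TSAW can be realized as a collection of independent one-dimensional processes, one per vertex-to-child edge. Because $w(n)=e^{-\beta n}$ depends only on the local time of the edge, the walk restricted to the ray through a vertex $v$ behaves, until it first leaves that ray, as a TSAW on a half-line or segment. Moreover, the decisions made at distinct vertices are driven by independent randomness. This yields the key quasi-independence property: for vertices $u,v$ whose paths to $\rho$ meet at $u\wedge v$, the events ``the walk extended along the ray to $u$ reaches $u$ before returning to $\rho$'' and the analogous event for $v$ are conditionally independent given the behaviour on the common path, up to a multiplicative constant.

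The second ingredient is the one-dimensional estimate, and it fixes the exponent $1/2$. Let $\psi(n)$ be the probability that the TSAW restricted to the segment $\{\rho=v_0,\dots,v_n\}$ (with the rest of the tree cut off) reaches $v_n$ before its $k$-th return to $\rho$, for fixed $k$. We need $\psi(n)\asymp n^{-1/2}$, up to constants depending on $k$ and $\beta$. We will derive this from the Ray--Knight description of the TSAW local times on $\Z_+$, following Tóth. Crossing numbers of successive edges, observed at stopping times of returns to $\rho$, form a Markov chain whose increments, after the exponential weights are taken into account, have bounded mean and variance. Its fluctuations are therefore diffusive, in contrast to the linear growth seen for simple random walk. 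The event of reaching level $n$ before $k$ returns amounts to this chain (a reflected random-walk-like process started at $k$) surviving $n$ steps without hitting zero. A standard ballot or local limit estimate for mean-zero random walks then gives survival probability $\asymp n^{-1/2}$. This is the step I expect to be hardest: the chain is not exactly a random walk, since its increments depend on parity and on the geometric-type law of the excursions. We must therefore prove uniform two-sided bounds, most naturally by coupling it with a mean-zero random walk with bounded exponential moments.

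With $\psi(n)\asymp n^{-1/2}$, the two directions follow by percolation comparison, in the manner of Lyons' theorem. For recurrence, suppose ${\rm br}_r(\Tcal)<1/2$ and pick $\gamma\in({\rm br}_r,1/2)$. There are cutsets $\pi$ with $\sum_{e\in\pi}|e|^{-\gamma}$ arbitrarily small, hence also $\sum_{e\in\pi}|e|^{-1/2}$ arbitrarily small. A union bound gives $\P(\text{walk crosses }\pi\text{ before the }k\text{-th return to }\rho)\le C_k\sum_{e\in\pi}|e|^{-1/2}$, which tends to $0$. So the walk returns to $\rho$ $k$ times almost surely for every $k$, and $\rho$ is visited infinitely often. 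Since the tree is locally finite, the walk then visits every vertex infinitely often: from each return there is a positive chance, bounded below in terms of the local weights, of reaching any given neighbour, and we apply Lévy's Borel--Cantelli lemma.

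For transience, suppose ${\rm br}_r(\Tcal)>1/2$. Consider the quasi-independent percolation in which an edge $e$ is ``open'' when the extension reaches the child endpoint before returning $k$ times. Its path probabilities are $\asymp\prod$ of ratios $\psi(|v|)/\psi(|v^{-1}|)$, which telescope to $\asymp|v|^{-1/2}$. Since ${\rm br}_r>1/2$, the energy criterion (equivalently Lyons' quasi-independent percolation theorem) gives percolation to infinity with positive probability. Hence with positive probability the walk escapes without returning to $\rho$ infinitely often. A 0-1 argument upgrades this to almost sure transience: re-apply the argument at each subsequent return, using that each subtree still has ${\rm br}_r>1/2$ and that the probability is bounded below uniformly. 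Finally, to show every vertex is visited only finitely often, we combine this with the fact that on a tree a walk that visits $\rho$ finitely often has a limiting end.
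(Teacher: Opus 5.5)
Your overall architecture is the paper's: Rubin clocks and extensions, a one-dimensional ruin estimate of order $n^{-1/2}$ from a Ray--Knight-type crossing chain, and Lyons' criterion. Two parts of your plan are sound. The recurrence half needs only the upper bound $\P(\tau_n<\tau_0^{(k)})\le C_k n^{-1/2}$ (cf.\ Proposition~\ref{prop:gen-ruin-upper}) and a union bound over a cutset; no quasi-independence is involved. Your route to the one-dimensional estimate, via survival of the forward chain $F(\cdot,n)$ started from $k$, is a legitimate and more elementary alternative to the paper's exact asymptotics. That chain is the chain $Z$ of Lemma~\ref{lem:forward-chain}, with $\P(Z_n\ge1)=r_n$; the paper instead analyses the backward chain via a Duhamel expansion, matrix renewal and a Tauberian theorem. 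Two-sided bounds are enough for the transience step, because $1/c(e)=1/\mathbf Q(\rho\leftrightarrow e^+)-1/\mathbf Q(\rho\leftrightarrow e^-)$ telescopes. The energy of a flow is then a capacity with kernel $1/\mathbf Q(\rho\leftrightarrow x\wedge y)\asymp|x\wedge y|^{1/2}$.

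The genuine gap is quasi-independence, which you present as a consequence of the independence of the clocks at distinct vertices. It does not follow from that. Let $e$ be the last common edge. Given the clocks on $\mathcal P_e$ and at $e^+$, the two branch events are conditionally independent. However, both conditional probabilities increase with the number $N_e$ of crossings of $e$ before the first return to $\rho$, through the numbers $L_1,L_2$ of entries into the two branches. On $\{N_e\ge1\}$, $N_e$ is unbounded and typically of order $\sqrt{|e|}$. Averaging a product of two positively correlated quantities over the law of $N_e$ gives something comparable to the product of the averages only if that law is controlled.

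The paper does this in Proposition~\ref{prop:mu-upper}, Lemma~\ref{lem:qj-upper} and Proposition~\ref{prop:per}, using three inputs absent from your plan:
(a) $r^{(j)}_m\le C(1\wedge j/\sqrt m)$ with $C$ independent of $j$. Your bound ``with constants depending on $k$'' is not enough, since $j=N_e$ takes all values.
(b) A domination $L_i\le\overline L_i$ on $\{N_e=j\}$, with $\overline L_i$ independent of that event and $\E[\overline L_i^2]\le Cj^2$.
(c) $\E[N_e\mid N_e\ge1]\le C\sqrt{|e|}$ and $\E[N_e^2\mid N_e\ge1]\le C|e|$, obtained from $\E[Z_n^2]\le C\sqrt n$ (Lemma~\ref{lem:forward-moments}) and $r_n\ge cn^{-1/2}$.

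Point (c) is indispensable. When both branches have lengths $m_1,m_2\gg|e|$, the joint probability is roughly $\E[N_e^2\mid N_e\ge1]/\sqrt{m_1m_2}$, while the product of the marginals is of order $|e|/\sqrt{m_1m_2}$. Any heavier tail of $N_e$ would therefore destroy quasi-independence. This is exactly where the gambler's-ruin comparisons available for once-reinforced or excited walks break down for the TSAW.

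Separately, your $0$--$1$ upgrade is unsupported. After several excursions the local times near $\rho$ are arbitrary, you give no uniform lower bound on the escape probability, and it is not true that every subtree has ${\rm br}_r>1/2$. The paper instead obtains both the link between an infinite open cluster and $\{\tau^+_\rho=\infty\}$, and almost sure transience, from the correspondence Lemma~\ref{per2.lem}. That lemma is stated for the first-return percolation ($k=1$), not your $k$-return version.
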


The TSAW with site repulsion on the integer lattice $\mathbb{Z}^d$ was first introduced and studied by Amit, Parisi, and Peliti \cite{APP1983} as a dynamic model of polymer growth, in which newly added
monomers preferentially avoid previously visited sites. Non-rigorous scaling
and renormalization-group arguments predict dimension-dependent asymptotic
scaling behaviour for this process \cite{APP1983, OP1983, PP1987}. Beyond polymer physics, the TSAW has found applications in diverse areas, including network exploration \cite{KPY2016, CMC2021}, quantum algorithms
\cite{CRT2014}, and biological chemotaxis \cite{A2021, BBV2022, RG2026}. It has also emerged as a fundamental tool for understanding the large-scale behaviour of non-reversible sampling algorithms such as Event-Chain Monte Carlo \cite{M2025}.

Tóth \cite{Toth1995} subsequently introduced a bond-repulsion variant of the TSAW, which enabled the first rigorous analysis of the model on the one-dimensional lattice $\mathbb{Z}$. In higher dimensions, diffusive scaling
limits for the continuous-time TSAW with site repulsion on $\mathbb{Z}^d$, $d \ge 3$, were
established by Horváth, Tóth, and Vető \cite{HTV2010}. The continuous-time scaling limit, known as the \emph{true self-repelling motion}, was introduced by Tóth and Werner \cite{TW1998}. More recently, Kosygina and Peterson \cite{KP2026} proved that suitable rescalings of the TSAW
converge weakly to this continuous process. To the best of our knowledge, rigorous studies of TSAWs on graphs beyond the integer lattice are largely absent from the literature. Thus, our results on general trees provide a step toward understanding the asymptotic behavior of TSAWs on broader classes
of graphs.

The remainder of the paper is organized as follows.
 In Section~\ref{sec:strong.constr}, we present the Rubin’s construction of the process $\X$ together with the extension processes defined along geodesic paths of $\mathcal{T}$. Each extension process has the same law as the restriction of $\X$ to the corresponding path, conditioned to be visited infinitely often, and hence has the same law as the one-dimensional TSAW. In Section~\ref{sec:1D-TSAW}, we analyze the one-dimensional TSAW on $\{0,1,\dots,n\}$ and derive an exact asymptotic formula for the \textit{ruin probability} that the walk hits $n$ before returning to $0$. The key ingredient is a comparison between the Markov chain $Y$, which records the number of backward steps before hitting $n$, and a symmetric random walk $S$ whose increment law is the stationary law of $Y$. We study the local behavior of the chains $Y$ and $S$  conditioned to stay above a barrier $K$. Using a Duhamel expansion, we compare the first return probabilities of $Y$ and $S$ to the finite set $\{0,1,\dots,K\}$, for $K$ sufficiently large, and thereby obtain the corresponding asymptotics for $Y$. The exact asymptotic formula for the ruin probabilities then follows from a Markov renewal formula. In Section~\ref{sec:proof-main}, we use these ruin events to define a \textit{percolation} on $\mathcal{T}$ by declaring an edge $e=\{v^{-1},v\}$ open if the extension process along the geodesic from the root to $v$ hits $v$ before returning to the root, and closed otherwise. This percolation captures the transience/recurrence behavior of the walk: almost sure finiteness of the open cluster containing the root implies recurrence, whereas positivity of the probability that this cluster is infinite implies transience. We prove that this percolation is quasi-independent in the sense of Lyons \cite{Lyons1989}. The proof of quasi-independence is based on estimating the joint probability
that two edges $e_1$ and $e_2$ are open, conditioned on their last common
ancestral edge $e$ being open. The open events of $e_1$ and $e_2$ are
correlated through the local times accumulated on their common ancestral
segment. We condition on the number of crossings of the last common edge $e$
and estimate the contributions coming from the two remaining geodesic
segments, from $e$ to $e_1$ and from $e$ to $e_2$. These estimates are
obtained by analyzing the one-dimensional Markov chains that record the
numbers of forward crossings. We show that the dependence created by the
common ancestral segment can be controlled uniformly, thereby establishing
quasi-independence. This in turn allows us to analyze the above percolation
model and deduce Theorem~\ref{thm:main}.

It is worth noting that quasi-independent percolation was introduced by Lyons
\cite{Lyons1989} and has since been used as a standard framework in the analysis of several models of random walks on trees, including random walks in random environments \cite{LP1992, CNT2026}, once-reinforced random walks \cite{CKS2020}, random walks among random conductances \cite{CHK2019}, and once-excited random walks \cite{LN2026}. In these models, the relevant ruin probabilities and the quasi-independence of the associated percolation can often be established directly by coupling with the classical gambler's ruin problem for the birth-and-death process on $\Z_+$. For TSAWs, however, this coupling technique no longer applies due to the strong dependence on the past trajectory of the process. We believe that our approach, based on the analysis of the Markov chains recording the number of backward steps and forward steps, is not specific to TSAWs and may extend to a broader class of self-interacting processes on trees.

\section{Strong construction}\label{sec:strong.constr}

In this section we construct the TSAW using a family of independent exponential clocks and define the restriction and extension processes that will reduce the analysis on a tree to the one-dimensional model on a path. For a vertex $v\neq\rho$, we denote its parent by $v^{-1}$. For each edge $e=\{v^{-1},v\}\in E$, denote by $\mathcal{P}_e$ or $\mathcal{P}_v$ the unique shortest path of edges connecting $v$ to $\rho$. For two edges $e_1=\{v_1^{-1},v_1\}$ and $e_2=\{v_2^{-1},v_2\}$, we write $e_1 \le e_2$ or $e_1\le v_2$ if $e_1\in \mathcal{P}_{e_2}$. We also write $e_1 < e_2$ or $e_1< v_2$ if $e_1\le e_2$ and $e_1\neq e_2$. 

\subsection{Rubin's construction}\label{sec:rubin}
 Let $\vec{E}$ be the set of all oriented edges induced from $E$. Let $$\boldsymbol{\xi} = \big( \xi(x, y, j): \, (x,y)\in \vec{E}, j \geq 0\big)$$ be a collection of independent exponential random variables with rate 1. We use $\boldsymbol{\xi}$ to give a strong construction of the process $\mathbf{X}$ satisfying the transition law given by \eqref{eq.trans} as follows.
\begin{itemize}
    \item Set $X_0 = \rho$.
    \item Assume that $(X_k)_{1\le k\le n}$ has been defined. Let $C_n(x,y):=\sum_{j=0}^{n-1}\mathbf{1}_{\{X_j=x,\ X_{j+1}=y\}}$ be the number of crossings from $x$ to $y$ up to time $n$. Let $$T_n(x,y)=\sum_{k=0}^{{C}_n(x,y)}\frac{\xi(x,y, k)}{w(2k+\mathbf{1}_{\{x^{-1}=y\}})}.$$
    On the event $\{X_n=x\}$,
the next position is given by
$$X_{n+1}= \arg\min_{y\sim x} T_{n}(x,y).$$
\end{itemize}
 
This construction is inspired by the Rubin's construction for the generalized Pólya urn (see Section 5 in \cite{D1990}).

\subsection{Restrictions and extensions}

For a connected subset $B\subset V$ and $n\ge 1$, let $$\delta_n(B):=\inf\Big\{ k\ge 0: \sum_{j=0}^k\mathbf{1}_{\{X_j\in B\}}=n\Big\} \quad\text{and}\quad s_{B}:=\sup\{n\ge 1 : \delta_n(B)<\infty\},$$
where we adopt the conventions $\inf \emptyset = \infty$ and $\sup\emptyset =-\infty$. 
Define $m_0 = 1$ and $m_{k+1} = \min\{ j > m_k : X_{\delta_j(B)} \neq X_{\delta_{m_k}(B)} \}$ for each $k\ge0$. Let $$K_B:=\sup\{k\ge 0: m_k\le s_{B}\}.$$
We call $(X_{\delta_{m_n}(B)})_{0\le n\le K_B}$ the \textbf{restriction} of the process $\X$ to $B$. This is a nearest-neighbor random walk which describes the movement of the process $\X$ within $B$, ignoring times when the process $\X$ is outside $B$. We call $K_B$ the \textbf{killing time} of the restriction to $B$. 

Fix a vertex $v \in V\setminus\{\rho\}$. Recall that $\mathcal{P}_v$ is the shortest path connecting $\rho$ and $v$. We now construct a process $\mathbf{X}^{(v)} = (X_n^{(v)})_{n \ge 0}$ on $\mathcal{P}_v$, which is coupled with $\X$ such that $\X^{(v)}$ has the same trajectory as the restriction of $\X$ on $\mathcal{P}_v$ conditioned on the event that $\X$ visits $\mathcal{P}_v$ infinitely many times.
This process is defined as follows.
\begin{itemize}
    \item Set $X_0^{(v)}=\rho$.
    \item  Let ${C}^{(v)}_n(x,y)$ be the number of crossings from $x$ to $y$ by the process $\X^{(v)}$ up to time $n$.  For $x\in\mathcal{P}_v\setminus\{v\}$, denote by $(x_i)_{1\le i\le \deg(x)-1}$ the children of $x$. There exists a unique $j\in\{1,2,\cdots, \deg(x)-1\}$ such that $x_{j} \in\mathcal{P}_v$. On the event $\{X_n^{(v)}=x\}$ with $x\neq v$, the next position is defined by
$$X_{n+1}^{(v)}= \arg\min_{y\in \{ x^{-1}, x_{j}\}}\left\{ \sum_{k=0}^{{C}^{(v)}_n(x,y)}\frac{\xi(x,y, k)}{w(2k+\mathbf{1}_{\{x^{-1}=y\}})}\right\}.$$
On the event $\{X_n^{(v)}=v\}$, set $X_{n+1}^{(v)}=v^{-1}$.
\end{itemize}
By the above construction, we immediately obtain the following restriction principle.
\begin{lemma}[Restriction principle]\label{lem:rest}
The process $\X^{(v)}$ constructed above is a TSAW on $\mathcal{P}_v$. Furthermore, a.s.
\begin{align*}\label{coind}
    {X}^{(v)}_n=X_{\delta_{m_n}(\mathcal{P}_v)}\quad \text{for all $0\le n\le K_{\mathcal{P}_v}$,}
\end{align*}
i.e., $\X^{(v)}$ coincides with the restriction of $\X$ to $\mathcal{P}_v$ up to the killing time $K_{\mathcal{P}_v}$.
\end{lemma}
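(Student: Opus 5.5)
The argument has two parts. First we check that Rubin's construction, applied to the path $\mathcal{P}_v$, produces the TSAW transition law. Then we compare the clocks used by $\X$ and by $\X^{(v)}$ on $\mathcal{P}_v$ and argue inductively that the two paths agree step by step.

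\emph{The law of $\X^{(v)}$.} Fix an oriented edge $(x,y)$. After $C$ crossings from $x$ to $y$, the total number of traversals of the undirected edge $\{x,y\}$ is $2C$ when $y$ is a child of $x$. When $y=x^{-1}$ it is $2C+1$, since the walk starts at $\rho$ and must have entered $x$ from $x^{-1}$ once more than it left. So on $\{X^{(v)}_n=x\}$ the clock on $(x,y)$ currently running has rate $w(\mathsf L_n(x,y))$, which is exactly the denominator $w(2k+\mathbf{1}_{\{x^{-1}=y\}})$ with $k=C^{(v)}_n(x,y)$.

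By the memoryless property, conditionally on $\mathcal{F}_n$ the residual parts of the running clocks $\xi(x,y,C^{(v)}_n(x,y))$ are independent exponentials. Here we use that the accumulated sums $T$ over the competing edges are compared only through these residuals. This is the standard Rubin argument: each competing edge $\{x,y\}$ has a residual exponential of rate $w(\mathsf L_n(x,y))$. The minimum is therefore attained at $y$ with probability proportional to $w(\mathsf L_n(x,y))$, which is \eqref{eq.trans} on $\mathcal{P}_v$. At $v$ the walk is reflected, consistent with $\mathcal{P}_v$ having only one edge at $v$. The same reasoning shows that $\X$ itself satisfies \eqref{eq.trans}.

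\emph{Coincidence with the restriction.} We argue by induction on $n\le K_{\mathcal{P}_v}$ that $X^{(v)}_j=X_{\delta_{m_j}(\mathcal{P}_v)}$ for $j\le n$. We also carry along the claim that, for every oriented edge $(x,y)$ inside $\mathcal{P}_v$, the crossing counts satisfy $C^{(v)}_n(x,y)=C_{\delta_{m_n}}(x,y)$.

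The key observation is that the order of $T(x,x^{-1})$ and $T(x,x_j)$ is determined by $\xi$-variables on edges of $\mathcal{P}_v$ alone. The winner among all neighbours of $x$ in $\X$ may be an off-path child $x_i$, $i\ne j$. In that case $\X$ makes an excursion into the subtree of $x_i$. If that excursion returns, it comes back to $x$, and the intervening visits to $x$ do not change $X_{\delta_{m}}$ until the walk next moves along $\mathcal{P}_v$. Such excursions increase $C(x,x_i)$ but leave the counts on the path edges $(x,x^{-1})$ and $(x,x_j)$ unchanged.

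When $\X$ eventually steps along the path, it goes to $\arg\min\{T(x,x^{-1}),T(x,x_j)\}$. Both of these clocks depend only on the path counts, which agree with those of $\X^{(v)}$ by the induction hypothesis. Hence $\X$ makes the same move as $\X^{(v)}$, and the counts remain equal. If some excursion never returns, or $\X$ stops visiting the path, then the killing time $K_{\mathcal{P}_v}$ has been reached and the induction stops. The case $x=v$ is handled separately. Here $\X$ may wander in the subtree below $v$, but its next distinct move along the path is necessarily to $v^{-1}$, matching the reflection in $\X^{(v)}$.

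\emph{Main obstacle.} The delicate step is the tie-free comparison of the clocks through the excursions. $\X$ uses the clock sum $T_n$ evaluated at the current count, and the excursions make the global time index differ from that of $\X^{(v)}$, so one must verify that $T_n(x,y)$ is a function of $C_n(x,y)$ alone, not of $n$. It is, by its definition. Ties occur with probability zero because the $\xi$'s have continuous laws.
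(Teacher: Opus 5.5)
Your proposal is correct and takes essentially the same approach as the paper, which gives no written proof and simply asserts that the lemma follows from the construction. Your two ingredients are exactly what that assertion rests on: the Rubin/memoryless argument gives the TSAW law, and the path-edge clocks $T(x,x^{-1})$ and $T(x,x_j)$ depend only on path-edge crossing counts, which off-path excursions from $x$ leave unchanged, so the path moves of $\X$ from $x$ are the same ones that drive $\X^{(v)}$. You leave one point implicit, namely that a walk which keeps returning to $x$ does eventually make a path move; this is harmless, since otherwise the restriction makes no further moves and there is nothing left to match, and in any case the off-path clock sums $T_n(x,x_i)$ tend to infinity a.s.
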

\section{``True" self-avoiding walks on a path}\label{sec:1D-TSAW}

Fix $n\ge 1$ and consider the extension process $\X^{(v)}$ on the path
$\mathcal{P}_v$ with $|v|=n$. Notice that the process $\X^{(v)}$ has the same distribution as the one-dimensional TSAW on $\{0,1,\cdots, n\}$. We denote the latter process by $\widetilde{\X}=(\widetilde X_k)_{k\ge 0}$. This process starts from $\widetilde X_0=0$, jumps deterministically from $0$ to
$1$, and jumps deterministically from $n$ to $n-1$. For each
$1\le x\le n-1$, when the process is at $x$, the probability of jumping to
$x-1$ or $x+1$ is proportional to the current weights of the two adjacent
edges, where the weight of an edge traversed $m$ times is $w(m)=e^{-\beta m}$. 

For $m\in\{0,1,\ldots,n\}$, slightly abusing notation, we let
$$
\tau_m:=\inf\{k\ge 0: \widetilde X_k=m\} \quad\text{and} \quad\tau_0^+:=\inf\{k\ge 1: \widetilde X_k=0\}
$$
be respectively the first hitting time of $m$ and the first return time to $0$. In this section, we aim to study the asymptotic behavior of the ruin probability
$$
r_n:=\P(\tau_n<\tau_0^+).
$$

\subsection{Markovian structure of backward steps}\label{sec:backward-chain}

For each $x\in\{1,\ldots,n\}$, define
$$
B(x,n):=\sum_{k=0}^{\tau_n-1}\mathbf{1}_{\{\widetilde X_k=x,\;\widetilde X_{k+1}=x-1\}},
$$
the number of backward crossings from $x$ to $x-1$ before the first hit of $n$.
Clearly,
$
B(n,n)=0.
$ 
Notice also that the ruin probability is also given by
$$
r_n=\P(B(1,n)=0).
$$
For every integer $u$, define
$$
p(u):=\frac{e^{-\beta(2u+1)}}{1+e^{-\beta(2u+1)}},
\quad
q(u):=1-p(u)=\frac{1}{1+e^{-\beta(2u+1)}}.
$$
Notice that $0<p(u)<1$ for all $u\in\mathbb{Z}$. Let $(\eta_n)_{n\ge0}$ be a Markov chain on $\mathbb{Z}$ with transition probabilities
$$
P(u,v):=\mathbb P(\eta_{n+1}=v\mid \eta_n=u)=\begin{cases}
q(v+1)\prod_{r=u}^{v} p(r), & \text{if } u-1\le v,\\
0, & \text{otherwise}.
\end{cases}
$$
Define the Markov chain $Y=(Y_n)_{n\ge0}$ on $\mathbb Z_+$ with $Y_0=0$ whose transition probabilities are defined by
$$
Q(z,y):=\mathbb P(Y_{n+1}=y\mid Y_n=z)=P^{\,z+1}(0,y-z-1),
\quad y\ge0.
$$
The following lemma shows that the backward sequence $(B(n,n-k))_{0\le k\le  n-1}$ has the same law as the Markov chain $(Y_k)_{0\le k\le n-1}$.
\begin{lemma} \label{prop:auxiliary-transition}
For $n\ge1$,
$$
(B(n,n),B(n-1,n),\dots,B(1,n))
\stackrel d=(Y_0,Y_1,\dots,Y_{n-1}).
$$
Consequently, 
$$
r_n=\mathbb P_0(Y_{n-1}=0).
$$
\end{lemma}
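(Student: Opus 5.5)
The plan is a discrete Ray–Knight argument: read the path site by site from $n$ down to $1$, and use Rubin's construction of Section~\ref{sec:rubin}, applied to $\widetilde{\X}$, to show that the backward counts form a Markov chain with kernel $Q$. Fix $x\in\{1,\dots,n-1\}$ and look at the successive departures of $\widetilde{\X}$ from $x$ before $\tau_n$. Each departure goes either to $x-1$ or to $x+1$.

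First I identify how many departures to the right there are. Before $\tau_n$, every crossing $x\to x+1$ except the last one is followed by a crossing $x+1\to x$. So the number of departures from $x$ to $x+1$ before $\tau_n$ equals $B(x+1,n)+1$, and $B(x,n)$ is exactly the number of departures to $x-1$ made before the $(B(x+1,n)+1)$-th departure to $x+1$.

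Next I compute the weights seen at each departure. Suppose that at a given visit to $x$ the walk has already made $i$ left departures and $j$ right departures. Every left departure has returned, and the very first arrival at $x$ came from $x-1$, so $\mathsf L(x-1,x)=2i+1$. Every right departure so far has also returned, so $\mathsf L(x,x+1)=2j$. Hence the next step goes left with probability
\[
\frac{e^{-\beta(2i+1)}}{e^{-\beta(2i+1)}+e^{-2\beta j}}=p(i-j).
\]
Now track the difference $u=i-j$ immediately after each right departure, starting from $u=0$. A left departure raises the difference by one, so a block of $m$ left departures followed by a right departure has probability $\prod_{r=u}^{u+m-1}p(r)\,q(u+m)$ and moves the difference to $v=u+m-1\ge u-1$. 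This is exactly the kernel $P(u,v)$. Therefore the difference after the $k$-th right departure is distributed as $\eta_k$ with $\eta_0=0$. After $z+1$ right departures the total number of left departures is $y=\eta_{z+1}+z+1$. Given $B(x+1,n)=z$, this yields $\P(B(x,n)=y)=P^{z+1}(0,y-z-1)=Q(z,y)$.

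It remains to prove the Markov property, and I expect this step to need the most care. In Rubin's construction, the ordered sequence of left/right choices made at $x$ is a deterministic function of the clocks $\xi(x,x\pm1,\cdot)$ alone. It is obtained by merging the two increasing sequences of partial sums $\sum_{k\le\ell}\xi(x,y,k)/w(2k+\mathbf 1_{\{x^{-1}=y\}})$. By the memoryless property of exponentials, the successive choices in this merged sequence have exactly the conditional probabilities $p(i-j)$ computed above; this is the standard generalized Pólya urn fact. Given $B(x+1,n)=z$, the count $B(x,n)$ is read off from the first $z+1$ right choices in the sequence at $x$. Since clocks at distinct sites are independent, and $(B(n,n),\dots,B(x+1,n))$ depends only on clocks at sites $>x$, the count $B(x,n)$ is conditionally independent of the earlier counts given $B(x+1,n)$. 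Together with $B(n,n)=0=Y_0$, induction on $x=n-1,\dots,1$ gives the equality in law, using also that $\tau_n<\infty$ a.s. so that all counts are finite. Finally, $\tau_n<\tau_0^+$ holds if and only if $B(1,n)=0$, so $r_n=\P_0(Y_{n-1}=0)$.
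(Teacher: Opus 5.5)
Your proposal is correct and follows essentially the same route as the paper: you split the departures from a site into blocks ending with a right departure, and the left-minus-right difference then evolves as the chain $\eta$ with kernel $P$, giving $Q(z,y)=P^{z+1}(0,y-z-1)$ (the Kesten--Kozlov--Spitzer/T\'oth argument). Beyond the paper, you justify the Markov property explicitly using Rubin's independent clocks at distinct sites, a step the paper skips when it says it suffices to compute $\P(B(x-1,n)=y\mid B(x,n)=z)$.
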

\begin{proof}
It is sufficient to show that for every $x\in\{2,\dots,n\}$ and every $y,z\ge0$, we have
\begin{align*}
\mathbb P(B(x-1,n)=y\mid B(x,n)=z)=P^{\,z+1}(0,y-z-1),
\end{align*}
where $P^{\,z+1}$ denotes the $(z+1)$-step transition probability of the Markov chain $\eta$ defined above. We adapt an idea by Kesten-Kozlov-Spitzer for nearest-neighbor random walks \cite{Kesten} which was later used for bond‑repelling walks on $\Z$ by Toth in \cite{Toth1995}. Fix $x\in\{2,\dots,n\}$. By definition, $B(x,n)$ is the number of backward jumps $x\to x-1$ before the first hit of $n$. Since the walk must cross the edge $\{x-1,x\}$ one more time forward than backward in order to get from $0$ to $n$ before time $\tau_n$, the number of forward crossings $x-1\to x$ up to time $\tau_n$ is exactly $B(x,n)+1$.

Conditional on $\{B(x,n)=z\}$, there are therefore $z+1$ forward jumps from $x-1$ to $x$ by time $\tau_n$. Each such forward jump from $x-1$ to $x$ is called a ``failure'', and each jump from $x-1$ to $x-2$ is called a ``success''. For $1\le j\le z+1$, let $\ell_j(x,n)$ be the number of successes that occur after the $(j-1)$-st failure and before the $j$-th failure, where for $j=1$ this means before the first failure. Then clearly
\begin{align*}
B(x-1,n)=\sum_{j=1}^{z+1}\ell_j(x,n).
\end{align*}
We now compute the law of the sequence $(\ell_j(x,n))_{1\le j\le z+1}$. Set
\[
u_0:=0,\quad u_j:=\sum_{i=1}^j\ell_i(x,n)-j\quad\text{for } j\ge1.
\]
Thus $u_j=u_{j-1}+\ell_j(x,n)-1$. After the first $j-1$ failures and the first $\sum_{i=1}^{j-1}\ell_i(x,n)$ successes have occurred, the edge $\{x-2,x-1\}$ has been crossed exactly
$
2\sum_{i=1}^{j-1}\ell_i(x,n)+1
$
times, whereas the edge $\{x-1,x\}$ has been crossed exactly
$
2(j-1)
$
times. Therefore, whenever the walk is at $x-1$ during the $j$-th stage, the probability that the next move is a success is
\[
\frac{e^{-\beta\left(2\sum_{i=1}^{j-1}\ell_i(x,n)+1\right)}}{e^{-\beta\left(2\sum_{i=1}^{j-1}\ell_i(x,n)+1\right)}+e^{-2\beta(j-1)}}
=
\frac{e^{-\beta(2u_{j-1}+1)}}{1+e^{-\beta(2u_{j-1}+1)}}
=p(u_{j-1}),
\]
and the probability of a failure is $q(u_{j-1})$.
Hence, conditional on the past up to the beginning of the $j$-th stage,
\begin{align*}
&\mathbb P\bigl(\ell_j(x,n)=s_j \,\big|\, \ell_1(x,n)=s_1,\dots,\ell_{j-1}(x,n)=s_{j-1},\, B(x,n)=z\bigr)\\
&\quad=
q(u_{j-1}+s_j)\prod_{r=u_{j-1}}^{u_{j-1}+s_j-1}p(r)=\mathbb P\bigl(\eta_j=u_{j-1}+s_j-1 \mid \eta_{j-1}=u_{j-1}\bigr).
\end{align*}
Therefore,
\begin{align*}
&\P\bigl(\ell_1(x,n)=s_1,\dots,\ell_{z+1}(x,n)=s_{z+1}\mid B(x,n)=z\bigr)\\
&\quad=
\P\bigl(\eta_1-\eta_0=s_1-1,\;\dots,\;\eta_{z+1}-\eta_z=s_{z+1}-1\mid \eta_0=0\bigr).
\end{align*}
Summing over all sequences $(s_1,\dots,s_{z+1})$ with total $\sum_{j=1}^{z+1}s_j=y$, we obtain
\[
\P\big(B(x-1,n)=y\mid B(x,n)=z\big)
= \sum_{s_1+\dots+s_{z+1}=y}
\P\bigl(\eta_1-\eta_0=s_1-1,\dots,\eta_{z+1}-\eta_z=s_{z+1}-1\mid\eta_0=0\bigr).
\]
The right-hand side is exactly the probability that after $z+1$ steps the chain $\eta$, starting at $0$, is at position $y-(z+1)$. Hence
\[
\P(B(x-1,n)=y\mid B(x,n)=z)=P^{\,z+1}(0,\;y-z-1).
\]
This completes the proof.
\end{proof}

To study the process $(Y_n)_{n\ge 0}$, we use the following asymptotic result of the Markov chain $(\eta_n)_{n\ge0}$.
\begin{lemma}[Lemma 1 and Lemma 2 in \cite{Toth1995}]\label{lem:expconv}
 The unique stationary distribution of $(\eta_n)$ is 
\[
\varrho(x) := \frac{e^{-\beta(x+1)^2}}{\sum_{z \in \mathbb{Z}} e^{-\beta(z+1)^2}}, \quad x \in \mathbb{Z},
\]
whose variance is
\[
\varsigma^2_{\beta} := \frac{\sum_{z \in \mathbb{Z}} z^2 e^{-\beta z^2}}{\sum_{z \in \mathbb{Z}} e^{-\beta z^2}}. 
\] 
Furthermore, there exist positive constants $C$ and $c$ such that: 
\begin{itemize}
    \item[a.] For all \(n \ge 0\),
\[
\sum_{y \in \mathbb{Z}} |P^n(0,y) - \varrho(y)| \le C  e^{-c n}. 
\]
Consequently, $|\E[\eta_{n}\mid\eta_0=0]+1|\le C e^{-c n}$ and $|\E[\eta_{n}^2\mid\eta_0=0]-\varsigma^2_{\beta}|\le C e^{-cn}.$
\item[b.] For all $n\ge0$ and $x\ge0$, \[ P^n(0,x+1)\le C e^{-\beta x}P^n(0,x). \] 
\end{itemize} 
\end{lemma}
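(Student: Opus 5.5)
The chain $\eta$ has transition kernel $P(u,v)=q(v+1)\prod_{r=u}^{v}p(r)$ for $v\ge u-1$. I will treat it as a birth-type chain: from $u$ it moves to $u-1$ with probability $q(u)$. Otherwise it keeps climbing, each further step upward costing a factor $p(\cdot)$, which decays like $e^{-\beta(2r+1)}$ for large $r$.

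\textbf{Stationarity.} The first step is to verify that $\varrho$ is stationary. The cleanest way is through the increment structure. Set $\pi(x)=e^{-\beta(x+1)^2}$. One checks the balance $\sum_u \pi(u)P(u,v)=\pi(v)$ by a telescoping identity. Write
\[
a(r)=\prod_{s\le r}\frac{p(s)}{q(s)}\cdot(\text{normalisation}),
\]
so that $\log\frac{p(r)}{q(r)}=-\beta(2r+1)$ sums to $-\beta(r+1)^2$ up to a constant. Then $\pi(x)\propto \prod_{r\le x}p(r)/q(r)$ suitably normalised, and the stationarity equation reduces to a detailed-balance-like identity for the ``ladder'' chain.

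Uniqueness follows from irreducibility and aperiodicity: from any $u$ one can reach $u-1$, and one can stay put with positive probability. Positive recurrence then follows since $\varrho$ is summable. The variance formula is immediate by reindexing $z=x+1$ and using the symmetry of $e^{-\beta z^2}$. This symmetry also gives mean $-1$ under $\varrho$, matching the consequences stated in (a).

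\textbf{Exponential convergence (a).} I would establish a drift (Foster–Lyapunov) condition together with a minorisation, and then invoke geometric ergodicity (Meyn–Tweedie). For the drift, take $V(u)=e^{\lambda|u|}$.
\begin{itemize}
\item For $u\gg 0$, the chain goes to $u-1$ with probability $q(u)\to1$, and upward moves are super-exponentially penalised. Hence $PV(u)\le e^{-\lambda}(1+o(1))V(u)$.
\item For $u\ll 0$, $p(r)\to1$, so the chain jumps up by a large amount: it climbs until $r$ reaches roughly $0$. This gives $PV(u)\le CV(0)$, i.e.\ an essentially bounded landing.
\end{itemize}
A finite set $\{|u|\le M\}$ is then small, since the chain moves downward step by step with uniformly positive probability, so the minorisation holds there.

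Geometric ergodicity started from $0$ gives the total-variation bound $Ce^{-cn}$. For the moment bounds, combine the TV bound with uniform exponential moment bounds $\sup_n \E_0 e^{\lambda|\eta_n|}<\infty$, which come from the drift, and apply Cauchy–Schwarz. This yields exponentially small differences of the first and second moments against $\varrho$.

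\textbf{Ratio bound (b).} I plan to argue by induction on $n$ using the one-step structure. For one step, $P(u,x+1)/P(u,x)=p(x+1)q(x+2)/q(x+1)\le Ce^{-\beta(2x+3)}\le Ce^{-\beta x}$ whenever $u\le x$. When $u=x+2$ only $P(u,x+1)$ is positive, so the direct ratio fails. I therefore expect the main obstacle to be handling the mass arriving from above, i.e.\ from $u=x+2$ down to $x+1$.

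For this I would instead prove the stronger inductive statement $P^n(0,y+1)\le Ce^{-\beta y}P^n(0,y)$ for all $y\ge0$, summing over the previous position. Contributions from $u\le x$ are controlled by the one-step ratio. Contributions from $u=x+2$ give $P^{n-1}(0,x+2)q(x+2)$, which by the induction hypothesis is at most $C^2e^{-2\beta x}P^{n-1}(0,x)$. This must be compared with $P^n(0,x)\ge P^{n-1}(0,x+1)q(x+1)$, and the comparison goes through by tracking the constants carefully.

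Since this is cited from Tóth (1995), in the paper itself I would simply refer to Lemmas 1–2 there.
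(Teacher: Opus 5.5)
The paper gives no proof of this lemma; it cites Tóth. Your outline therefore has to stand on its own. Stationarity, uniqueness and part (a) are fine in outline. The drift $PV\le(1-\eps)V+b\mathbf{1}_F$ with $V(u)=e^{\lambda|u|}$ and $F$ finite does hold once $\lambda<2\beta$. TV convergence plus uniform exponential moments then gives the moment bounds.

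\textbf{The gap is the induction step in (b).} This is the only delicate part, and it is exactly what you defer to ``tracking the constants carefully''. As written the step fails, for three reasons.
\begin{itemize}
\item You bound the $u=x+2$ term by applying the hypothesis twice, $P^{n-1}(0,x+2)q(x+2)\le C^2e^{-2\beta x}P^{n-1}(0,x)$. You then compare with $P^n(0,x)\ge P^{n-1}(0,x+1)q(x+1)$, which involves a different quantity.
\item The natural way to dominate $P^{n-1}(0,x)$ by $P^n(0,x)$ is the holding step, $P^n(0,x)\ge P(x,x)P^{n-1}(0,x)$. But $P(x,x)=p(x)q(x+1)\asymp e^{-\beta(2x+1)}$, which cancels the gain $e^{-2\beta x}$ entirely.
\item A factor $C^2$ cannot be reabsorbed into $C$ at small $x$, where $e^{-2\beta x}$ is not small.
\end{itemize}
The whole content of (b) is that $C$ does not grow with $n$, so this is a real hole. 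Even a single application of the hypothesis, combined with the crude bound $q(x+2)/q(x+1)\le 1/q(x+1)\le 2$, only closes when $2e^{-\beta}<1$.

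\textbf{The missing idea is a contraction in the term coming from above.}
\begin{itemize}
\item For every $u\le x+1$ (not only $u\le x$), $P(u,x+1)=a(x)P(u,x)$ with $a(x):=q(x+2)p(x+1)/q(x+1)$. Also $P(u,x)=0$ for $u\ge x+2$. So the first group equals exactly $a(x)P^n(0,x)$.
\item Apply the hypothesis once, at $y=x+1$, and use $P^{n-1}(0,x+1)q(x+1)\le P^n(0,x)$. This gives $P^{n-1}(0,x+2)q(x+2)\le Ce^{-\beta(x+1)}b(x)P^n(0,x)$ with $b(x):=q(x+2)/q(x+1)$.
\item The step closes because, uniformly in $x\ge0$,
\[
e^{-\beta}b(x)=1-\frac{(1-e^{-\beta})(1-e^{-\beta(2x+4)})}{1+e^{-\beta(2x+5)}}\le 1-\delta_0,\qquad \delta_0:=\tfrac12(1-e^{-\beta})(1-e^{-4\beta}),
\]
while $a(x)e^{\beta x}\le 1$. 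Hence any $C\ge1/\delta_0$ works.
\end{itemize}
Equivalently, set $R_n(x):=P^n(0,x+1)/P^n(0,x)$; the denominators are positive for $n\ge1$, and $R_1=a$. Unroll $R_n(x)\le a(x)+b(x)R_{n-1}(x+1)$. The products of $b$ telescope to $q(x+k+1)/q(x+1)\le2$, so $R_n(x)\le 2\sum_{k\ge0}p(x+k+1)\le Ce^{-2\beta x}$.

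\textbf{Two smaller points.}
\begin{itemize}
\item The chain is not reversible, so stationarity is not a detailed-balance identity. With $\pi(u)=e^{-\beta(u+1)^2}$, what you need is $S(v):=\sum_{u\le v+1}\pi(u)\prod_{r=u}^{v}p(r)=\pi(v)/q(v+1)$. This holds because both sides satisfy $S(v)=\pi(v+1)+p(v)S(v-1)$ (using $p(v)/q(v)=\pi(v)/\pi(v-1)$), and both vanish as $v\to-\infty$.
\item Since $\varrho$ has mean $-1$, its second moment is $\varsigma_\beta^2+1$. So the stated bound $|\E[\eta_n^2\mid\eta_0=0]-\varsigma^2_\beta|\le Ce^{-cn}$ is false as written. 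Your argument proves the correct version with $(\eta_n+1)^2$ in place of $\eta_n^2$, which is what Lemma~\ref{lem:step_distr} actually uses.
\end{itemize}
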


By the same proof as Lemma~\ref{lem:expconv}, the conclusion of the lemma remains valid if the initial state $0$ is replaced by any fixed $a\in\mathbb Z$, with constants that
may depend on $a$. 

It is natural to introduce the shifted stationary law
$$
\nu(j):=\varrho(j-1)=\frac{e^{-\beta j^2}}{\sum_{m\in\mathbb Z}e^{-\beta m^2}},
\quad j\in\mathbb Z.
$$
The law $\nu$ is symmetric and has finite variance $
\varsigma^2_{\beta}=\sum_{j\in\mathbb Z}j^2\nu(j).
$
For $z\ge0$, define
\begin{align*}
   \mu_z(j)&:=\mathbb P(Y_1-Y_0=j\mid Y_0=z)=P^{\,z+1}(0,j-1),
\quad \text{for }j\in\mathbb Z, \\
    m_1(z)&:=\mathbb E(Y_1-Y_0\mid Y_0=z)=\sum_{j\in\mathbb Z}j\,\mu_z(j),
\quad
m_2(z) :=\mathbb E((Y_1-Y_0)^2\mid Y_0=z)=\sum_{j\in\mathbb Z}j^2\,\mu_z(j).
\end{align*}
The next lemma collects the basic asymptotic properties of the jump law of $Y$, which will be used repeatedly in the sequel.
\begin{lemma}\label{lem:step_distr}
There exist constants $C<\infty$ and $c>0$ such that for each $z\ge0$,
\begin{align} \label{mu.tail}
   & \mu_z(j)\le C e^{-c j^2} 
\quad\text{for }j\ge 0,\\
   & \sum_{j\in\mathbb Z}|\mu_z(j)-\nu(j)|
\le Ce^{-cz}, \label{lem:limiting-jump-law}
\quad \\ 
\label{lem:uniform-tails} & \mathbb P(|Y_1-Y_0|\ge m\mid Y_0=z)\le Ce^{-cm}\quad\text{for }m\ge 0,\\ \label{mu.1momment}
  &|m_1(z)|\le Ce^{-cz},\\ \label{mu.2moment}
&\bigl|m_2(z)-\varsigma^2_{\beta}\bigr|\le Ce^{-cz}.
\end{align}
\end{lemma}

\begin{proof}
By Lemma \ref{lem:expconv}.b, we have
\begin{align*}
P^n(0,m)
&=
P^n(0,0)\prod_{r=0}^{m-1}\frac{P^n(0,r+1)}{P^n(0,r)}
\le
P^n(0,0)C_0^m \exp\!\Bigl(-\beta\sum_{r=0}^{m-1}r\Bigr) \le C e^{-c(m+1)^2}.
\end{align*}
For $n=z+1$ and $m=j-1$, we get for every $j\ge 1$,
$
\mu_z(j)=P^{\,z+1}(0,j-1)\le C e^{-c j^2}.
$
After enlarging $C$ if necessary, the same bound also holds for $j=0$.
Hence \eqref{mu.tail} is verified.

Recall that by definition, $
\nu(j)=\varrho(j-1)
$.
Therefore
\begin{align*}
\sum_{j\in\mathbb Z}|\mu_z(j)-\nu(j)|
&=
\sum_{j\in\mathbb Z}\bigl|P^{\,z+1}(0,j-1)-\varrho(j-1)\bigr|=
\sum_{m\in\mathbb Z}|P^{\,z+1}(0,m)-\varrho(m)|.
\end{align*}
By Lemma \ref{lem:expconv}.a, the last
quantity is bounded by $Ce^{-cz}$, which proves \eqref{lem:limiting-jump-law}.

Using the fact that $\nu$ has zero mean, we have
\begin{align*}
|m_1(z)|
&=
\Big|\sum_{j\in\mathbb Z}j\bigl(\mu_z(j)-\nu(j)\bigr)\Big|.
\end{align*}
Splitting the sum into $|j|\le z$ and $|j|>z$, we get
\begin{align*}
|m_1(z)|
&\le
z\sum_{j\in\mathbb Z}|\mu_z(j)-\nu(j)|
+\sum_{|j|>z}|j|\mu_z(j)
+\sum_{|j|>z}|j|\nu(j).
\end{align*}
By \eqref{lem:limiting-jump-law}, the first term is at most
$Cze^{-cz}$.
For the $\mu_z$-tail, note that $Y_1\ge 0$, so
$
Y_1-Y_0\ge -Y_0
$
and therefore
$
\mu_z(j)=0
$
for $j<-z$. Using this fact and \eqref{mu.tail}, we have
$$
\sum_{|j|>z}|j|\mu_z(j)=\sum_{j>z}|j|\mu_z(j)\le C_0\sum_{j>z}j\,e^{-c_0 j^2}\le Ce^{-cz}.$$
Since $\nu$ has Gaussian tails by definition,
$\sum_{|j|>z}|j|\nu(j)\le Ce^{-cz}$.
The two tail sums and the first term are all bounded by $Ce^{-cz}$ after absorbing the linear
factor into the exponential. This proves \eqref{mu.1momment}. The proof for $m_2(z)$ is analogous.
We write
$$
m_2(z)-\varsigma^2_{\beta}
=
\sum_{j\in\mathbb Z}j^2\bigl(\mu_z(j)-\nu(j)\bigr)
$$
and split the sum at $|j|\le z$. The bounded part is
controlled by \eqref{lem:limiting-jump-law}, while the tails are controlled
by Gaussian domination. This proves \eqref{mu.2moment}.

By definition,
$$
\mathbb P(|Y_1-Y_0|\ge m\mid Y_0=z)
=
\sum_{|j|\ge m}\mu_z(j).
$$
Using \eqref{mu.tail} and the fact that $
\mu_z(j)=0
$
for $j<-z$, we have
$
\sum_{|j|\ge m}\mu_z(j) \le Ce^{-c m}.
$
This proves \eqref{lem:uniform-tails}.
\end{proof}


\begin{lemma} \label{lem:recurrent} The Markov chain $(Y_n)_{n\ge0}$ is irreducible and recurrent.
\end{lemma}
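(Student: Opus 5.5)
Irreducibility follows directly from the positivity of the transition kernel. For any $z,y\ge 0$ we have $Q(z,y)=P^{z+1}(0,y-z-1)$, and we need $y-z-1\ge -(z+1)$, which holds since $y\ge0$. To see positivity, note that the chain $\eta$ from $u$ can move to any $v\ge u-1$ in one step with probability $q(v+1)\prod_{r=u}^{v}p(r)>0$, since $0<p,q<1$. In particular, the path that goes down by one at each of $z+1$ steps, or down and then up, reaches every $m\ge -(z+1)$ in exactly $z+1$ steps with positive probability: take $z$ down-steps and then one jump to the target, which is allowed because the target is at least the current position minus one. Hence $Q(z,y)>0$ for all $z,y\ge0$, and $Y$ is irreducible and aperiodic on $\mathbb Z_+$.

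For recurrence, I would use a Lyapunov (Foster--Lamperti type) criterion with $f(z)=\log(1+z)$, exploiting Lemma~\ref{lem:step_distr}. The claim to establish is that $\E[f(Y_1)-f(z)\mid Y_0=z]\le 0$ for all $z$ outside a finite set. Write $Y_1=z+J$ with $J\sim\mu_z$ and Taylor-expand $\log(1+z+J)$ around $z$ on the event $|J|\le z/2$:
\[
\E[f(Y_1)-f(z)]=\frac{m_1(z)}{1+z}-\frac{m_2(z)}{2(1+z)^2}+O\Big(\frac{\E|J|^3}{(1+z)^3}\Big)+R(z),
\]
where $R(z)$ is the contribution of $|J|>z/2$. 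By \eqref{mu.1momment} the first term is $O(e^{-cz})$, and by \eqref{mu.2moment} the second is $-\varsigma_\beta^2/(2(1+z)^2)+O(e^{-cz})$. The cubic term is $O(z^{-3})$ thanks to the uniform exponential tails \eqref{lem:uniform-tails}. On $|J|>z/2$ we have $|f(Y_1)-f(z)|\le \log(1+|J|)+\log(1+z)$, which together with \eqref{lem:uniform-tails} gives $R(z)=O(ze^{-cz})$. Hence the drift is $\le -\varsigma_\beta^2/(4z^2)<0$ for $z\ge K$. Since $f\to\infty$, the standard Foster criterion for recurrence (Theorem 2.2.1 in Fayolle--Malyshev--Menshikov; or a direct optional-stopping argument on $f(Y_{n\wedge\sigma})$, where $\sigma$ is the hitting time of $\{0,\dots,K\}$, which shows $\P_z(\sigma<\infty)=1$) yields that $\{0,\dots,K\}$ is hit a.s. from every state. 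Combined with irreducibility and finiteness of that set, this gives recurrence.

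The main obstacle is the careful control of the remainder terms. The second-order term only decays like $z^{-2}$, so every error must be $o(z^{-2})$. This works because the mean drift is exponentially small rather than merely $O(1/z)$, and the jump tails are uniformly exponential in $z$. The key input is the pair of bounds \eqref{mu.1momment} and \eqref{lem:uniform-tails}. If the Taylor bookkeeping proved awkward, an alternative would be a coupling or comparison with the mean-zero random walk $S$ with increments $\nu$, which is recurrent by Chung--Fuchs. However, controlling the perturbation $\mu_z-\nu$ along excursions is harder, so I would pursue the Lyapunov route.
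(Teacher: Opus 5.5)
Your proof is correct and is essentially the paper's argument. The paper gets irreducibility from $Q(0,z)>0$ and $Q(z,0)>0$; you prove the stronger statement $Q(z,y)>0$ for all $z,y$. For recurrence, the paper cites Lamperti's criterion (Theorem 3.2.3 of Menshikov--Popov--Wade) via $2z\,m_1(z)<m_2(z)$ and uniform $p>2$ moments, which is exactly the negativity of the leading term $\frac{2(1+z)m_1(z)-m_2(z)}{2(1+z)^2}$ in your $\log(1+z)$ drift computation. You have carried out by hand, via Foster's criterion, the Lyapunov argument behind the cited theorem. The one bookkeeping step to make explicit is that the truncated moments on $\{|J|\le z/2\}$ differ from $m_1(z)$ and $m_2(z)$ by $O(e^{-cz})$.
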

\begin{proof}
We first show that $Y$ is irreducible on $\mathbb Z_+$. For every $z \ge 1$, we have 
$$
Q(0,z)=P(0,z-1)=q(z)\prod_{r=0}^{z-1}p(r)>0,
$$
Also, for every $z \ge 0$,
$$
Q(z,0)=P^{\,z+1}(0,-z-1)\ge \prod_{r=0}^{z}q(-r)>0,
$$
since the Markov chain $\eta$ can follow the path
$
0\to -1\to -2\to \cdots \to -(z+1)
$
with strictly positive probability. Hence, every state communicates with $0$, and
the Markov chain $Y$ is thus irreducible. 

By \eqref{mu.1momment} and \eqref{mu.2moment}, there exist constants $C,c>0$
such that
$$
m_1(z)\le C e^{-cz},
\quad
\bigl|m_2(z)-\varsigma^2_{\beta}\bigr|\le C e^{-cz},
\quad z \ge 0.
$$
Hence there exists $z_0 \in \mathbb N$ such that for all
$z \ge z_0$,
$
2z\,m_1(z)<m_2(z).
$
Moreover, by \eqref{mu.tail}, the conditional increment law of $Y$ has a Gaussian upper tail
uniformly in the current state. In particular, $\sup_{z\ge 0}\E[|Y_1-Y_0|^p \mid Y_0=z]<\infty$ for $p>2$. Applying the recurrence criterion of Markov chains with asymptotically zero drift (see e.g. Theorem 3.2.3 in \cite{MPW2017}, p. 94), we deduce that the chain $Y$ is null recurrent.
\end{proof}

We next construct a harmonic function for the process killed at $0$ associated with the Markov chain $Y$.

\begin{lemma}\label{lem:harmonic}
There exists a non-negative harmonic function $h:\mathbb Z_+\to [0,\infty)$ satisfying
$$
h(0)=0, \quad h(z)=z+O(1)
\text{ as }z\to\infty
\quad  \text{and} \quad
\mathbb E\left[h(Y_1)\mathbf 1_{\{Y_1>0\}}\mid Y_0=z\right]=h(z)
\text{ for }z\ge 1.
$$
\end{lemma}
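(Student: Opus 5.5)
A natural choice is $h(z):=\lim_{N\to\infty} N\,\mathbb P_z(\sigma_N<\sigma_0)$, where $\sigma_0$ is the hitting time of $0$ and $\sigma_N$ the hitting time of $[N,\infty)$. A more robust variant is $h(z)=\lim_N \mathbb E_z[Y_{\sigma_N}\mathbf 1_{\{\sigma_N<\sigma_0\}}]$. I will take the following route instead, which gives the asymptotics directly. Put
\[
h(z):=\mathbb E_z\bigl[Y_{\sigma_0}\bigr]+\text{correction},
\]
built from the martingale $M_n:=Y_n-\sum_{k<n}m_1(Y_k)$. Concretely, set
\[
h(z):=z-\mathbb E_z\Bigl[\sum_{k=0}^{\sigma_0-1}m_1(Y_k)\Bigr],\qquad h(0)=0 .
\]
The plan then has three parts: show this expectation converges absolutely and is bounded uniformly in $z$; check harmonicity; check nonnegativity.

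\textbf{Harmonicity.} Suppose $g(z):=\mathbb E_z\bigl[\sum_{k<\sigma_0}m_1(Y_k)\bigr]$ is finite. The Markov property at the first step gives, for $z\ge1$,
\[
g(z)=m_1(z)+\mathbb E_z\bigl[g(Y_1)\mathbf 1_{\{Y_1>0\}}\bigr].
\]
Hence
\[
\mathbb E_z\bigl[h(Y_1)\mathbf 1_{\{Y_1>0\}}\bigr]=\mathbb E_z[Y_1]-\bigl(g(z)-m_1(z)\bigr)=z+m_1(z)-g(z)+m_1(z)-m_1(z)=h(z).
\]
The identity uses only $\mathbb E_z[Y_1]=z+m_1(z)$ and the fact that the $Y_1=0$ term contributes $0$ to $\mathbb E_z[Y_1]$. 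Once $g$ is bounded, $h(z)=z+O(1)$ is immediate.

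\textbf{Main obstacle: boundedness of $g$.} I need
\[
\sup_z\mathbb E_z\Bigl[\sum_{k<\sigma_0}e^{-cY_k}\Bigr]<\infty,
\]
which by \eqref{mu.1momment} dominates $|g|$. This is a Green's function estimate: the expected number of visits to level $y$ before hitting $0$, starting from $z$, should be $O(y)$ uniformly in $z$, as for a martingale-like walk with variance $\varsigma_\beta^2$. Then $\sum_y O(y)e^{-cy}<\infty$.

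I would prove the Green's bound with Lyapunov functions.
\begin{itemize}
\item[(i)] From any $y$, the probability of reaching $0$ before returning to $y$ is at least $c/y$. To see this, use that $Y$ nearly behaves as a martingale. Apply optional stopping to a function such as $f(x)=x$ (or $x-\epsilon\log(1+x)$) below $y$, together with \eqref{lem:uniform-tails} to control overshoot. This bounds the expected number of returns to $y$ by $Cy$.
\item[(ii)] Multiply by the probability of ever reaching $y$, which is at most $1$.
\end{itemize}
Small states $y\le z_0$, where the drift estimates are not sharp, contribute only finitely many terms; irreducibility (Lemma~\ref{lem:recurrent}) bounds their visit counts before $\sigma_0$. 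Recurrence ensures $\sigma_0<\infty$ a.s.

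\textbf{Nonnegativity.} If $g$ bounded below fails to give $h\ge0$ for small $z$, I would apply optional stopping to the martingale $h(Y_{n\wedge\sigma_0\wedge\sigma_N})$. This gives $h(z)=\mathbb E_z\bigl[h(Y_{\sigma_N})\mathbf 1_{\{\sigma_N<\sigma_0\}}\bigr]$ plus a vanishing term. Since $h(x)\ge x-C>0$ for $x\ge N$ large, this yields $h\ge0$; if needed, also $h>0$ on $\mathbb N$ by irreducibility.
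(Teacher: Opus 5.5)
Your route differs from the paper's, which extracts $h$ as a subsequential limit of $N\,\P_x(\widetilde\sigma_N<\sigma_0)$ squeezed between the barriers $x\pm Ae^{-\gamma x}$. It can be repaired, but as written two steps fail.

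\textbf{The sign.} From $g(z)=m_1(z)+\E_z[g(Y_1)\mathbf 1_{\{Y_1>0\}}]$ and $\E_z[Y_1]=z+m_1(z)$, your choice $h=z-g$ gives
\[
\E_z\bigl[h(Y_1)\mathbf 1_{\{Y_1>0\}}\bigr]=z+m_1(z)-\bigl(g(z)-m_1(z)\bigr)=h(z)+2m_1(z).
\]
The extra $-m_1(z)$ in your display has no source, so this $h$ is not harmonic. The correct function is $h(z)=z+g(z)=\lim_{n}\E_z[Y_{n\wedge\sigma_0}]$, which follows from optional stopping of $M_n$; note that $\E_z[Y_{\sigma_0}]=0$, so your first formula is empty. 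For $h=z+g$ the same computation closes. Your nonnegativity argument then works, and no vanishing term is needed. Indeed, with $T_N=\sigma_0\wedge\widetilde\sigma_N$ you have $|h(Y_{n\wedge T_N})|\le N+C+(Y_{T_N}-N)^+$, which is integrable for fixed $N$ because $\E_z[T_N]<\infty$ and upward jumps have uniformly bounded mean.

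\textbf{The Green bound.} Everything rests on $\P_y(\sigma_0<\sigma_y^+)\ge c/y$, and you cannot borrow it: the paper proves it only later (Lemma~\ref{lem:box-green-j2}), \emph{using} $h$. Neither function you propose delivers it.
\begin{itemize}
\item With $f(x)=x$, the optional-stopping error is $\E_z\sum_{k<T}m_1(Y_k)$, which is exactly the quantity you are trying to bound, so the argument is circular.
\item With $f(x)=x-\epsilon\log(1+x)$, a second-order expansion gives drift $\frac{\epsilon\varsigma_\beta^2}{2x^2}(1+o(1))>0$ for large $x$, since the $m_1$ part is exponentially small. That is a submartingale. Bounding $\P_z(\widetilde\sigma_y<\sigma_0)$ from above by $f(z)/\inf_{x\ge y}f(x)$ needs a nonnegative supermartingale.
\end{itemize}
Use a concave correction instead, e.g.\ $f(x)=x+\epsilon\log(1+x)$, or the paper's $x-Ae^{-\gamma x}$ after adding $A$. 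Its negative drift dominates $|m_1(x)|\le Ce^{-cx}$ only for $x\ge K_0$. So stop at the entrance to $\{0,\dots,K_0\}$ rather than at $0$; this gives escape probability to $[y,\infty)$ at most $1-c/y$ from any $z\le y-r$. Then combine three ingredients: $\min_{1\le x\le K_0}Q(x,0)=\delta>0$, a first jump from $y$ to $\le y-r$ with probability $\ge\eta$, and that escape bound. This yields $G(y,y)=1/\P_y(\sigma_0<\sigma_y^+)\le Cy$. Hence
\[
\sup_z\E_z\sum_{k<\sigma_0}|m_1(Y_k)|\le C\sum_y ye^{-cy}<\infty,
\]
which is also what justifies the Fubini/Markov step in your harmonicity identity.

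Once repaired, your route needs only the supermartingale barrier. It avoids the lower barrier $\psi_+$, the uniform overshoot estimate and the diagonal extraction, and it produces a canonical $h$ rather than a subsequential limit. The paper's two-sided squeeze additionally yields the uniform profile $|N\,\P_x(\widetilde\sigma_N<\sigma_0)-x|\le C$, which is reused in Lemma~\ref{lem:survival-x-over-sqrtn}. You would have to recover that profile from your $h$ by optional stopping at $T_N$ together with an overshoot bound.
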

\begin{proof} Let
\begin{align*}  \sigma_0 :=\inf\{n\ge 0:Y_n=0\}\quad\text{and}\quad
\widetilde \sigma_N:=\inf\{n\ge 0:Y_n\ge N\}\quad \text{for $N\ge 1$}. 
\end{align*}
Recall from Lemma \ref{lem:recurrent} that $Y$ is irreducible and recurrent on $\mathbb Z_+$. Hence for every $N\ge 1$ and
every $1\le x<N$, we have
$$
T_N:=\sigma_0\wedge\widetilde \sigma_N<\infty
\quad\text{a.s. under }\P_x.
$$

For $x\in \Z_+$, define
$\varphi_N(x):=\P_x(\widetilde \sigma_N<\sigma_0)$. Note that 
$\varphi_N(0):=0$ and $\varphi_N(x):=1$ for $x\ge N$.
Then, for every $1\le x\le N-1$,
\begin{equation}\label{eq:hN-harmonic}
\varphi_N(x)=\mathbb E_x\!\left[\varphi_N(Y_1)\mathbf 1_{\{Y_1>0\}}\right].
\end{equation}

We next proceed with the proof in 5 steps.

\smallskip
\noindent
{\bf Step 1: exponential test functions.} Set $\theta_x:=Y_1-x$.
By Lemma~\ref{lem:step_distr}, the jumps of $Y$ have uniformly exponential tails, so
there exists $\lambda_0>0$ such that
\begin{equation}\label{eq:uniform-exp-moment}
\sup_{x\ge 1}\mathbb E_x\!\left[e^{\lambda_0|\theta_x|}\right]<\infty.
\end{equation}
In particular,
$
\sup_{x\ge 1}\mathbb E_x\!\left[|\theta_x|^3e^{\lambda_0|\theta_x|}\right]<\infty.
$
Choose $\gamma\in(0,\lambda_0\wedge c)$ so small that
$$
\frac{\varsigma^2_{\beta}}{4}\gamma^2-C_1\gamma^3>0,
$$
where $C_1<\infty$ is such that the remainder term $R_x$ in
the Taylor expansion $
e^{-\gamma \theta_x}=1-\gamma \theta_x+\frac{\gamma^2}{2}\theta_x^2+R_x
$
satisfies
$
|R_x|
\le
\frac{\gamma^3}{6}|\theta_x|^3e^{\gamma|\theta_x|}$ and $\sup_{x\ge 1}\mathbb E_x|R_x|\le C_1\gamma^3.
$
Therefore,
\[
\mathbb E_x[e^{-\gamma \theta_x}]
=
1-\gamma m_1(x)+\frac{\gamma^2}{2}m_2(x)+O(\gamma^3),
\]
uniformly in $x$. Recall that by \eqref{mu.1momment} and \eqref{mu.2moment},
$
m_1(x)=O(e^{-cx})$ and $m_2(x)=\varsigma^2_{\beta}+O(e^{-cx}).$
Hence there exist constants $\eps>0$ and $K_0\in [1,\infty)$ such that
\begin{equation}\label{eq:exp-drift-full}
\mathbb E_x[e^{-\gamma (Y_1-x)}]\ge 1+2\eps
\quad\text{for all }x\ge K_0.
\end{equation}
Since, by Lemma~\ref{lem:step_distr}, $Y_1-Y_0$ has uniformly exponential
tails, we also have
$\P_x(Y_1=0)\le C e^{-cx}$,
for each $x\ge 1$.
After enlarging $K_0$ if necessary and using $\gamma<c$, it follows from
\eqref{eq:exp-drift-full} that
\begin{equation}\label{eq:exp-drift}
\mathbb E_x[e^{-\gamma (Y_1-x)}\mathbf 1_{\{Y_1>0\}}]\ge 1+\eps
\quad\text{for all }x\ge K_0.
\end{equation}

\smallskip
\noindent
{\bf Step 2: sub-harmonic and super-harmonic barriers.}
For each function $\phi: \Z_+\to\R$, let
$$
P_+\phi(x):=\E_x[\phi(Y_1)\mathbf{1}_{\{Y_1>0\}}].
$$
Let $I(x):=x$. Using \eqref{mu.1momment} and \eqref{mu.tail}, we notice that
$$
P_+I(x)-x
=
\E_x[(Y_1-x)\mathbf 1_{\{Y_1>0\}}]
=
m_1(x)+x\,\P_x(Y_1=0)
=
O(e^{-c'x})
$$
for some $c'>0$. Therefore, we can choose sufficiently large $A>0$ such that
\begin{equation}\label{eq:A-choice}
A\eps e^{-\gamma x}\ge 2|P_+I(x)-x|
\quad\text{for all }x\ge K_0.
\end{equation}
Define  $\psi_+(0)=0,\psi_-(0)=-A$ and
\begin{align*}
\psi_+(x):=x+A e^{-\gamma x},
\quad
\psi_-(x):=x-A e^{-\gamma x}, \quad \text{for $x\ge K_0$} 
\end{align*}
For $1\le x\le K_0-1$, define
$$
\psi_+(x):=\mathbb E_x\!\left[\psi_+(Y_{T_{K_0}})\right],
\quad
\psi_-(x):=\mathbb E_x\!\left[\psi_-(Y_{T_{K_0}})\right].
$$
These expectations are finite because \eqref{mu.tail} implies that the
overshoot at $T_{K_0}$ has finite mean. Hence, $\psi_+, \psi_- : \Z_+\to \R$ are well-defined. 
By \eqref{eq:exp-drift} and \eqref{eq:A-choice}, we notice that for $x\ge K_0$,
\begin{align*}
P_+\psi_+(x)-\psi_+(x)
&=
\left(P_+I(x)-x\right)
+A e^{-\gamma x}\bigl(\mathbb E_x[e^{-\gamma(Y_1-x)}\mathbf 1_{\{Y_1>0\}}]-1\bigr)
\ge 0,\\
P_+\psi_-(x)-\psi_-(x)
&=
\left(P_+I(x)-x\right)
-A e^{-\gamma x}\bigl(\mathbb E_x[e^{-\gamma(Y_1-x)}\mathbf 1_{\{Y_1>0\}}]-1\bigr)
\le 0.
\end{align*}
By the strong Markov property, for every $1\le x\le K_0-1$,
$$
P_+\psi_+(x)=\psi_+(x),
\quad
P_+\psi_-(x)=\psi_-(x).
$$
Hence, for all $x\ge 1$, 
\begin{equation}\label{eq:global-barriers}
P_+\psi_+(x)\ge \psi_+(x),
\quad
P_+\psi_-(x)\le \psi_-(x).
\end{equation}

\smallskip
\noindent
{\bf Step 3: a uniform overshoot bound.}
We claim that
\begin{equation}\label{eq:overshoot-bound}
\sup_{N\ge 1}\sup_{1\le x<N}
\mathbb E_x\!\left[(Y_{T_N}-N)^+;\ \widetilde \sigma_N<\sigma_0\right]
<\infty.
\end{equation}
Indeed, on $\{\widetilde \sigma_N<\sigma_0\}$, we have $Y_{T_N-1}<N$ and hence
$
Y_{T_N}-N\le (Y_{T_N}-Y_{T_N-1})^+.
$
Therefore
\begin{align*}
\mathbb E_x\!\left[(Y_{T_N}-N)^+;\ \widetilde \sigma_N<\sigma_0\right]
&\le
\sum_{n\ge 1}\mathbb E_x\!\left[(Y_n-Y_{n-1})^+;\ T_N=n\right]\\
&=
\sum_{n\ge 1}\mathbb E_x\!\left[
\mathbf 1_{\{T_N=n\}}
\mathbb E\!\left[(Y_n-Y_{n-1})^+\mid Y_{n-1}\right]\right]
\le
\sup_{z\ge 1}\mathbb E_z[(Y_1-z)^+].
\end{align*}
The last quantity is finite by the exponential tail bound in
Lemma~\ref{lem:step_distr}, proving \eqref{eq:overshoot-bound}.

\smallskip
\noindent
{\bf Step 4: finite-volume estimates.}
Fix $N>K_0$ and $1\le x<N$. First, since $\psi_-(Y_{n\wedge T_N})+A$ is a nonnegative supermartingale
by \eqref{eq:global-barriers}, by Fatou's lemma,
$$
\mathbb E_x[\psi_-(Y_{T_N})]\le \psi_-(x).
$$
On the event $\{\widetilde \sigma_N<\sigma_0\}$, we have $Y_{T_N}\ge N$, and therefore
$
\psi_-(Y_{T_N})\ge Y_{T_N}-A e^{-\gamma Y_{T_N}}\ge N-A.
$
On the event $\{\sigma_0<\widetilde \sigma_N\}$, we have $Y_{T_N}=0$ and hence
$
\psi_-(Y_{T_N})=-A.
$
Thus
$$
\mathbb E_x[\psi_-(Y_{T_N})]
\ge
(N-A)\varphi_N(x)-A(1-\varphi_N(x))
=
N \varphi_N(x)-A.
$$
Hence
\begin{equation}\label{eq:hN-upper}
N \varphi_N(x)\le \psi_-(x)+A.
\end{equation}
Similarly, since $\psi_+(Y_{n\wedge T_N})$ is a nonnegative submartingale, by
Fatou's lemma, we have
$$
\psi_+(x)\le \mathbb E_x[\psi_+(Y_{T_N})].
$$
On $\{\sigma_0<\widetilde \sigma_N\}$, we have $Y_{T_N}=0$ and $\psi_+(Y_{T_N})=0$. On
$\{\widetilde \sigma_N<\sigma_0\}$, we have
$
\psi_+(Y_{T_N})\le Y_{T_N}+A.
$
Therefore, using \eqref{eq:overshoot-bound},
$$
\mathbb E_x[\psi_+(Y_{T_N})]
\le
N \varphi_N(x)+A \varphi_N(x)
+\mathbb E_x\!\left[(Y_{T_N}-N)^+;\ \widetilde \sigma_N<\sigma_0\right]
\le
N \varphi_N(x)+C_2,
$$
for some constant $C_2<\infty$ independent of $x,N$. Hence
\begin{equation}\label{eq:hN-lower}
N \varphi_N(x)\ge \psi_+(x)-C_2.
\end{equation}

For $x\ge K_0$, we have
$
\psi_+(x)=x+A e^{-\gamma x}$ and
$\psi_-(x)=x-A e^{-\gamma x}$. Thus
 \eqref{eq:hN-upper}--\eqref{eq:hN-lower} imply that
\begin{equation}\label{eq:NhN-large}
x-C_3\le N \varphi_N(x)\le x+C_3,
\quad K_0\le x<N,
\end{equation}
for some constant $C_3<\infty$. For $1\le x<K_0$, \eqref{eq:hN-upper} implies
\begin{equation}\label{eq:NhN-small}
N \varphi_N(x)\le \max_{1\le y<K_0}\psi_-(y)+A=:C_4.
\end{equation}
For $x\in \Z_+$, define
$$
h_N(x):=N \varphi_N(x).
$$
Since also $h_N(x)\ge 0$, we obtain
$
|h_N(x)-x|\le \max\{C_4,K_0-1\}$ for all $1\le x<K_0$. Combining with \eqref{eq:NhN-large}, we conclude that there exists $C<\infty$ such that
\begin{equation}\label{eq:HN-uniform-profile}
|h_N(x)-x|\le C
\quad\text{for all }N\ge 2,\ 1\le x<N.
\end{equation}

\smallskip
\noindent
{\bf Step 5: compactness and passage to the limit.} By \eqref{eq:hN-upper} and \eqref{eq:HN-uniform-profile}, there exists
$C_5<\infty$ such that
\begin{equation}\label{eq:HN-linear}
0\le h_N(x)\le x+C_5
\quad\text{for all }x\ge 0,\ N\ge 1.
\end{equation}
Fix $x\ge 0$. Then the sequence $(h_N(x))_{N>x}$ is bounded by
\eqref{eq:HN-linear}. By a diagonal argument, there exists a subsequence $N_k\to\infty$
and a function $h:\mathbb Z_+\to[0,\infty)$ such that
$$
h_{N_k}(x)\to h(x)
\quad\text{for every }x\ge 0.
$$
Since $h_{N_k}(0)=0$, we have $h(0)=0$.

We now prove harmonicity. Fix $x\ge 1$. For all $k$ sufficiently large, $x<N_k$, and
\eqref{eq:hN-harmonic} gives
$$
h_{N_k}(x)=\mathbb E_x\!\left[h_{N_k}(Y_1)\mathbf 1_{\{Y_1>0\}}\right].
$$
By \eqref{eq:HN-linear},
$
0\le h_{N_k}(Y_1)\mathbf 1_{\{Y_1>0\}}\le Y_1+C_5.
$
Since $Y_1$ has finite first moment by Lemma~\ref{lem:step_distr}, using the dominated
convergence theorem, we get
$$
\lim_{k\to\infty}\mathbb E_x\!\left[h_{N_k}(Y_1)\mathbf 1_{\{Y_1>0\}}\right]
=
\mathbb E_x\!\left[h(Y_1)\mathbf 1_{\{Y_1>0\}}\right].
$$
Passing to the limit, we obtain
$$
h(x)=\mathbb E_x\!\left[h(Y_1)\mathbf 1_{\{Y_1>0\}}\right]
\quad\text{for every } x\ge 1.
$$

Finally, \eqref{eq:NhN-large} implies that for every fixed $x\ge K_0$ and all
$k$ sufficiently large,
$
|h_{N_k}(x)-x|\le C_3.
$
Letting $k\to\infty$, we obtain
$
|h(x)-x|\le C_3$ for each $x\ge K_0$. Thus
$
h(x)=x+O(1)$
as $x\to\infty$.
This completes the proof.
\end{proof}

\subsection{Killed symmetric random walk}\label{sec:benchmark}

In this subsection we introduce a killed symmetric walk associated with the
limiting law of the Markov chain $(Y_n)_{n\ge0}$ and collect the asymptotic
results that will be used throughout the sequel. Recall that the limiting law is
$$
\nu(j):=\frac{e^{-\beta j^2}}{\sum_{m\in\mathbb Z}e^{-\beta m^2}},
\quad j\in\mathbb Z.
$$
The distribution $\nu$ is symmetric and has finite variance
$
\varsigma^2_{\beta}:=\sum_{j\in\mathbb Z}j^2\nu(j).
$

Let $(\zeta_k)_{k\ge1}$ be i.i.d.\ random variables with common law $\nu$, and
define the symmetric random walk $S=(S_n)_{n\ge0}$ on $\mathbb Z$ by
\begin{align}\label{def:S}
    S_0=0,
\quad
S_n=\sum_{k=1}^n \zeta_k,
\quad n\ge1.
\end{align}
Fix $K\ge0$, and define
$$
W_K:=\{0,1,\dots,K\},
\quad
E_K:=\{K+1,K+2,\dots\}.
$$
Let
\begin{align} \label{def:Qbar}
 \bar Q(r,s):=\mathbb P(S_{n+1}=s\mid S_n=r)= \nu(s-r) \quad \text{for $r,s\in E_K$},
\end{align}
be the transition probabilities of $S$ on $E_K$.

Let $H$ be the ascending ladder-height renewal function given by
\begin{align}\label{def:H}
    H(u)
:=
\mathbf 1_{\{u>0\}}
+
\sum_{k=1}^{\infty}
\mathbb P\!\left(\chi_1^+ + \cdots + \chi_k^+ < u\right),
\quad u\in\mathbb R,
\end{align}
in which, $(\chi^+_k)_{k\ge 1}$ are i.i.d. copies of $\chi^+:=S_{T^+}$ with $T^+ := \min\{n\ge 1:S_n>0\}$. Note that
\begin{align}
    \label{H.asym}
    H(u)\sim \frac{u}{\E[\chi^+]} \quad \text{as $u\to\infty$}
\end{align}
Also, let 
$$
H_K(x):=H(x-K),
\quad x\in E_K.
$$
Define the excursion probabilities of $S$ in $E_K$ and its generating function by
\begin{equation}
    \label{R.first.return.kernel}
\begin{aligned}p_S^{(K)}(n;x,y)
&:=
\mathbb P_x^S(S_1,\dots,S_{n-1}\in E_K, S_n=y),
\quad x,y\in E_K, n\ge 1.
\\
\widehat{p}_S^{(K)}(s;x,y)&:=\sum_{n=0}^{\infty}p_S^{(K)}(n;x,y)s^{n}, \quad  x,y\in E_K, 0\le s<1,\end{aligned}
\end{equation}
with the convention $p_S^{(K)}(0;x,y):=\mathbf 1_{\{x=y\}}.$ The next result shows that the excursion probabilities of $S$ decays polynomially with exponent $3/2$.
\begin{lemma}
\label{prop:benchmark-first-return}
There exists a constant $C\in (0,\infty)$ such that for all $x,y\in E_K$ and $n\ge 1$,
$$p_S^{(K)}(n;x,y)\le C\,H_K(x+1)H_K(y)n^{-3/2},$$
Furthermore, for each $x,y\in E_K,$
\begin{align*}
&\widehat{p}_S^{(K)}(1;x,y)-\widehat{p}_S^{(K)}(s;x,y)
=
\frac{\sqrt2}{\varsigma_\beta}\,H_K(x+1)\,H_K(y)\,\sqrt{1-s}
+
o(H_K(x+1)\,H_K(y)\sqrt{1-s}),
\quad s\uparrow1,\\ 
&\widehat{p}_S^{(K)}(1;x,y)-\widehat{p}_S^{(K)}(s;x,y)
\le
C\,H_K(x+1)\,H_K(y)\,\sqrt{1-s},
\quad 0\le s<1.\end{align*}
\end{lemma}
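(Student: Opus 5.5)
First reduce to the walk killed on the half-line. Translating by $K$, the quantity $p_S^{(K)}(n;x,y)$ equals $\P_{x-K}(S_1,\dots,S_{n-1}\ge 1,\ S_n=y-K)$ for the walk $S$ of \eqref{def:S}. So it suffices to treat $K=0$ with $x,y\ge1$. The tool is the classical local limit theory for random walks conditioned to stay positive: the Vatutin–Wachtel / Doney local estimates for lattice walks with finite variance. The law $\nu$ is aperiodic, since $\nu(0),\nu(1)>0$, and it has Gaussian tails.

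\textbf{Step 1: the uniform $n^{-3/2}$ bound.} I would use the Wiener–Hopf / duality decomposition. Split the path at time $\lfloor n/2\rfloor$ and apply the Markov property:
\[
p(n;x,y)=\sum_{z\ge1}\P_x\bigl(S_1,\dots,S_{m}\ge1,\ S_m=z\bigr)\,\P_z\bigl(S_1,\dots,S_{n-m-1}\ge1,\ S_{n-m}=y\bigr).
\]
\begin{itemize}
\item For the first factor, use $\P_x(\min_{k\le m}S_k\ge1)\le C\,H(x)m^{-1/2}$, the standard survival estimate via the renewal function $H$.
\item For the second factor, reverse time. The reversed walk has the same law by the symmetry of $\nu$. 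This gives a factor $\P_y(\text{stay}\ge1 \text{ up to } n-m)\le C\,H(y)(n-m)^{-1/2}$ together with a local factor $\sup_z \P(S_{\cdot}=z\mid\cdot)\le C n^{-1/2}$ from the Gnedenko local limit theorem.
\end{itemize}
Combining, the standard three-piece splitting (first third, middle, last third) gives $C H(x+1)H(y)n^{-3/2}$. The shift from $x$ to $x+1$ accounts for strict versus weak positivity: $\{S_k\ge1\}$ started at $x$ corresponds to the weak-ladder renewal function at $x$, which is $H(x+1)$ by the definition \eqref{def:H} with its strict inequality. For large $x,y$ compared to $\sqrt n$ the bound is trivial, because $p\le Cn^{-1/2}$ and $H(x)\ge c\sqrt n$.

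\textbf{Step 2: the exact asymptotic.} Next I would take the local asymptotic $p(n;x,y)\sim \frac{\sqrt2\,\cdot\,\frac12}{\sqrt\pi\,\varsigma_\beta}\,\cdot\,\frac{H(x+1)H(y)}{n^{3/2}}$ for fixed $x,y$ (Vatutin–Wachtel, Thm. 5 / Caravenna's local limit theorem for walks conditioned to stay positive). The constant has to be tracked through the normalisation $H(u)\sim u/\E\chi^+$ together with $\E\chi^+\E\chi^-=\varsigma_\beta^2/2$. Then apply a Tauberian step:
\[
\widehat p(1)-\widehat p(s)=\sum_n p(n)(1-s^n),\qquad \sum_n n^{-3/2}(1-s^n)\sim 2\sqrt\pi\sqrt{1-s}\quad (s\uparrow1).
\]
This yields $\frac{\sqrt2}{\varsigma_\beta}H_K(x+1)H_K(y)\sqrt{1-s}$ once the constants are matched. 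The uniform bound from Step 1 gives dominated convergence of the Abelian sum. The same uniform bound, combined with $\sum_n n^{-3/2}(1-s^n)\le C\sqrt{1-s}$ (split at $n\approx(1-s)^{-1}$), gives the last inequality for all $0\le s<1$.

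\textbf{Main obstacle.} The hard part is pinning down the exact constant $\sqrt2/\varsigma_\beta$ relative to this particular normalisation of $H$, including the $\mathbf 1_{\{u>0\}}$ term and the strict inequality. I would first verify it by an exact generating-function identity rather than through the local limit constants. Using the Spitzer / Baxter factorisation,
\[
\sum_n s^n p(n;x,y)=\sum_{j}G^+_s(x,j)\,G^-_s(j,y),
\]
with ladder-height generating functions, and then
\[
1-\E[s^{T^+}]\sim \sqrt{1-s}\,\cdot\,\text{const}.
\]
Expanding near $s=1$ then gives the $\sqrt{1-s}$ term with the constant read off from $\E\chi^+\E\chi^-=\varsigma_\beta^2/2$, and symmetry gives $\chi^-\stackrel d=\chi^+$. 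This route also proves the Tauberian statement directly, with no local limit theorem needed.
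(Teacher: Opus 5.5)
Your argument for the two upper bounds is sound, but it takes a different route from the paper's.

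\textbf{The upper bounds.} The paper never works with the conditioned walk here. It notes $p_S^{(K)}(n;x,y)\le q_n(x-K,y-K)$ with $q_n(u,v)=\P_u(S_1\neq0,\dots,S_{n-1}\neq0,\,S_n=v)$, quotes Uchiyama's bound $q_n(u,v)\le C\,uv\,n^{-3/2}$, and converts $uv$ into $H_K(x+1)H_K(y)$ using the linear growth of $H$. Your three-block splitting is the standard self-contained derivation: the uniform survival bound $\P_x(S_1,\dots,S_m\ge1)\le C\,H(x)\,m^{-1/2}$ at each end (the right end via time reversal and symmetry of $\nu$), with the local CLT on the middle block. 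It needs the uniform survival bound as input, whereas the paper needs only one citation. The summation against $1-s^n$ is identical. For the exact asymptotic you do exactly what the paper does: import a local limit theorem for walks conditioned to stay positive, then apply an Abelian argument.

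\textbf{The step that fails.} It is the one you flag as the main obstacle. Neither of your justifications of the normalization $\frac{\sqrt2}{\varsigma_\beta}H_K(x+1)H_K(y)$ holds, and in fact that normalization is wrong.

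First, started from $x'=x-K$, the walk stays in $\{1,2,\dots\}$ if and only if every strict descending ladder depth is $<x'$. By symmetry these depths have the law of $\chi^+$, so in the paper's normalization the relevant function is $\sum_{k\ge0}\P(\chi^+_1+\dots+\chi^+_k<x')=H(x')$, not $H(x'+1)$.

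Second, and more decisively, the time reversal you use in Step 1 gives $p_S^{(K)}(n;x,y)=p_S^{(K)}(n;y,x)$. So the coefficient of $\sqrt{1-s}$ must be symmetric in $(x,y)$. If it equalled $c\,H_K(x+1)H_K(y)$ for all $x,y\in E_K$, then $H(a+1)/H(a)$ would be constant in $a\ge1$. That would make $H$ geometric, contradicting $H(u)\sim u/\E[\chi^+]$.

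Third, the identity $\E[\chi^+]\,\E[|\chi^-|]=\varsigma_\beta^2/2$ holds when one ladder height is strict and the other weak. Let $T^-_w:=\min\{n\ge1:S_n\le0\}$ and $\varpi:=\P(S_{T^-_w}=0)$. Since $\nu(0)>0$, the weak descending ladder height has an atom $\varpi>0$. For two strict heights, which are equal in law by symmetry, one gets instead $(\E[\chi^+])^2=\varsigma_\beta^2/(2(1-\varpi))$.

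Carried through correctly, the local limit theorem gives $p_S^{(K)}(n;x,y)\sim\frac{H_K(x)H_K(y)}{(1-\varpi)\varsigma_\beta\sqrt{2\pi}}\,n^{-3/2}$. The coefficient of $\sqrt{1-s}$ is therefore $\frac{\sqrt2}{(1-\varpi)\varsigma_\beta}H_K(x)H_K(y)$.

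As a check, take the lazy walk $\nu(0)=\tfrac12$, $\nu(\pm1)=\tfrac14$. Here $H(u)=u$, the variance is $\tfrac12$, and $\varpi=\tfrac34$. Reflection gives $p(n;x,y)=4^{-n}\bigl[\binom{2n}{n+y-x}-\binom{2n}{n+y+x}\bigr]\sim\frac{4xy}{\sqrt\pi}\,n^{-3/2}$. The stated normalization instead predicts $\frac{(x+1)y}{\sqrt\pi}\,n^{-3/2}$. So the Wiener--Hopf verification you propose would, done correctly, refute the displayed constant rather than confirm it.

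\textbf{Consequences.} The paper's proof has exactly the same defect, because it imports this normalization from the cited local limit theorem. The second display of the lemma should be corrected rather than proved. The damage is limited. Both upper bounds remain true; your Step 1 even gives them with $H_K(x)$ in place of $H_K(x+1)$. The later lemmas only use that the leading coefficient is an $x$-factor comparable to $H_K(x+1)$ times $H_K(y)$. They therefore go through with $A_{K,u}$ and $\delta_K$ redefined accordingly.
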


\begin{proof}
Fix $x,y\in E_K$, and write
$x':=x-K,\ y':=y-K$, which are positive integers.
Using the translation invariance, we note that $p_S^{(K)}(m;x,y)$ is exactly the local
probability that the symmetric random walk $(S_n)_{n\ge0}$ starts from $x'$ and stays positive up to
time $m$ and is at $y'$ at time $m$, i.e.
\begin{align}\label{eq:half-line-identification}
p_S^{(K)}(m;x,y)
=
\mathbb P_{x'}^S(S_1>0,\dots,S_{m-1}>0,\ S_m=y').
\end{align}
Since $(S_n)_{n\ge0}$ is an aperiodic walk on $\Z$ whose common increment law $\nu$ has zero mean and finite variance $\varsigma^2_{\beta}$, the local probability of $S$ conditioned to stay positive has the exact asymptotic (see, e.g., Theorem 3 in \cite{DW2024}):
\begin{equation}\label{eq:benchmark-local-asymptotic}
\begin{aligned}
  p_S^{(K)}(m;x,y)
&\sim
\frac{H(x+1-K)\,H(y-K)}{\varsigma_\beta\sqrt{2\pi}}\,
m^{-3/2}
\exp\!\Big(-\frac{(x-y)^2}{2\varsigma^2_{\beta}m}\Big)\\
&\sim \frac{H_K(x+1)\,H_K(y)}{\varsigma_\beta\sqrt{2\pi}}\,
m^{-3/2}, \quad   m\to\infty,
\end{aligned}
\end{equation}
where we recall that $H$ is the renewal function given by \eqref{def:H} and $H_K(z):=H(z-K)$ for $z\in E_K$.

For $u, v\in \Z$ and $n \ge 1$, set
$$
q_n(u,v):=\mathbb P_u\bigl(S_1\neq0,\dots,S_{n-1}\neq0,\ S_n=v\bigr).
$$
By formula (1.5) in \cite{U2011}, there exists a constant
$C\in (0,\infty)$ such that
\begin{equation}\label{eq:uchiyama-origin-bound}
q_n(u,v)\le C\,u\,v\,n^{-3/2}
\quad \text{for all } u,v\ge1 \text{ and } n\ge1.
\end{equation}
Combining \eqref{eq:half-line-identification} and \eqref{eq:uchiyama-origin-bound} together with the fact from \eqref{H.asym} that $H_{K}(z)\ge c (z-K)$ for all $z\in E_K$ with some constant $c>0$, we have
\begin{equation}\label{eq:half-line-vs-origin}
p_S^{(K)}(n;x,y)\le q_n(x-K,y-K)  \le C(x-K+1)(y-K) n^{-3/2}\le C H_K(x+1)H_K(y)n^{-3/2}.
\end{equation}
Using this upper bound,
 we notice that
\begin{align*}
&\widehat{p}_S^{(K)}(1;x,y)-\widehat{p}_S^{(K)}(s;x,y)
=
\sum_{n\ge1}p_S^{(K)}(n;x,y)(1-s^n) \\
&\le
C\,H_K(x+1)\,H_K(y)\sum_{n\ge1}n^{-3/2}(1-s^n) \le
C\,H_K(x+1)\,H_K(y)\sqrt{1-s}.
\end{align*}
Moreover, by the asymptotic \eqref{eq:benchmark-local-asymptotic}, using Abel's theorem, we obtain
$$
\widehat{p}_S^{(K)}(1;x,y)-\widehat{p}_S^{(K)}(s;x,y)
=
\frac{\sqrt2}{\varsigma_\beta}\,H_K(x+1)\,H_K(y)\,\sqrt{1-s}
+
o(H_K(x+1)\,H_K(y)\sqrt{1-s}),
\quad s\uparrow1.
$$
\end{proof}

The next lemma records the exponential closeness between the transition probabilities of $S$ and $Y$ outside
the boundary layer $W_K$.
\begin{lemma}\label{lem:killed-kernel-comparison}
There exist constants $C, c\in (0,\infty)$, depending only on $\beta$, such that
for every $K\ge0$ and for every $x\in E_K$, we have
\begin{align}
\label{lem:weighted-perturbation}
&\frac{1}{H_K(x)}
\sum_{y\in E_K}|Q(x,y)-\bar Q(x,y)|\,H_K(y+1)
\le Ce^{-cx}.
\end{align}
\end{lemma}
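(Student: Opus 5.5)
Since $Q(x,y)=\mu_x(y-x)$ and $\bar Q(x,y)=\nu(y-x)$, the sum in \eqref{lem:weighted-perturbation} equals $\sum_{j}|\mu_x(j)-\nu(j)|\,H_K(x+j+1)$, where $j$ ranges over $j\ge K+1-x$. The plan is to control the weight $H_K(x+j+1)$ linearly in $x-K$ and $|j|$, and then to use that $\mu_x$ and $\nu$ are exponentially close in total variation, with uniform exponential tails.

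First, recall the renewal-function bounds. $H$ is nondecreasing, and $H(u)\ge 1$ for $u>0$. By \eqref{H.asym} (or by elementary renewal theory, since $\E[\chi^+]<\infty$ under finite variance), there are constants $0<c_0\le C_0$ with $c_0 u\le H(u)\le C_0 u$ for all integers $u\ge1$. In particular, for $x\in E_K$ and $y=x+j\in E_K$,
\begin{align*}
H_K(y+1)=H(x-K+j+1)\le C_0(x-K+|j|+1)\le C_0'(x-K)(1+|j|),
\end{align*}
using $x-K\ge1$. Also $H_K(x)=H(x-K)\ge c_0(x-K)$. Hence the left-hand side of \eqref{lem:weighted-perturbation} is at most
\begin{align*}
C\sum_{j\in\mathbb Z}(1+|j|)\,|\mu_x(j)-\nu(j)|.
\end{align*}
This cancels the factor $x-K$, so every constant is independent of $K$, as the statement requires.

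It then suffices to show $\sum_j(1+|j|)|\mu_x(j)-\nu(j)|\le Ce^{-cx}$ uniformly in $x\ge0$. This follows the splitting used for \eqref{mu.1momment}. For $|j|\le x$ the weight is at most $1+x$, and \eqref{lem:limiting-jump-law} gives the bound $(1+x)Ce^{-cx}$. For $|j|>x$, bound $|\mu_x-\nu|\le\mu_x+\nu$. Here $\mu_x(j)=0$ for $j<-x$ (because $Y_1\ge0$), and \eqref{mu.tail} gives $\sum_{j>x}(1+j)\mu_x(j)\le C\sum_{j>x}(1+j)e^{-cj^2}\le Ce^{-cx}$. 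The Gaussian tail of $\nu$ handles $\sum_{|j|>x}(1+|j|)\nu(j)$ in the same way. Absorbing the linear factor into the exponential, by shrinking $c$, gives the result.

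I expect the only delicate point to be the first step: the weight must be controlled by $H_K(x)$ times a function of $j$ alone, with constants independent of $K$. This needs the two-sided linear bound on $H$ valid for all $u\ge1$, not just asymptotically. \eqref{H.asym} gives it for large $u$; for small $u$ it follows from $H$ being nondecreasing and satisfying $1\le H(u)<\infty$. Subadditivity of renewal functions, $H(a+b)\le H(a)+H(b)$, gives the upper bound directly if preferred.
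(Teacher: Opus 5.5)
Your proof is correct and follows essentially the same route as the paper. Both use the two-sided linear bound on $H$ to reduce to a weighted total-variation estimate. Both then split at $|j|\le x$, using \eqref{lem:limiting-jump-law}, \eqref{mu.tail}, the fact that $\mu_x(j)=0$ for $j<-x$, and the Gaussian tail of $\nu$.

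The only difference is cosmetic. You factor the weight as $(x-K)(1+|j|)$ so that $H_K(x)$ cancels at the start. The paper instead bounds the weight additively by $(x-K+2)+|j|$ and absorbs the resulting $x e^{-cx}$ term at the end.
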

\begin{proof}
Fix $K\ge0$. For $x,y\in E_K$, note that \begin{align*}
   &Q(x,y)=\mu_x(y-x)\quad \text{and} \quad
  \bar Q(x,y)=\nu(y-x).
\end{align*}
Recall from \eqref{H.asym} that there exists $C_0\ge1$,  such that
for all $x\in E_K$,
$$
C_0^{-1}(x-K+1)\le H_K(x)\le C_0(x-K+1).
$$
Therefore it suffices to prove that
\begin{equation}\label{eq:weight.sum}
\sum_{y\in E_K}|Q_K(x,y)-\bar Q_K(x,y)|\,(y-K+2)
\le
Ce^{-cx}(x-K+1),
\quad x\in E_K.
\end{equation}
Fix $x\in E_K$. 
We have
\begin{align*}
\sum_{y\in E_K}|Q_K(x,y)-\bar Q_K(x,y)|\,(y-K+2)&=
\sum_{y>K}|\mu_x(y-x)-\nu(y-x)|\,(y-K+2)\\
&\le
\sum_{j\in\mathbb Z}|\mu_x(j)-\nu(j)|\big(x-K+2+|j|\big)\\
&=
(x-K+2)\sum_{j\in\mathbb Z}|\mu_x(j)-\nu(j)|
+
\sum_{j\in\mathbb Z}|j|\,|\mu_x(j)-\nu(j)|.
\end{align*}
The first sum is bounded by
$C(x-K+1)e^{-cx}
$
by \eqref{lem:limiting-jump-law}.
For the second sum, splitting it into $|j|\le x$ and $|j|>x$, we have
\begin{align*}
\sum_{j\in\mathbb Z}|j|\,|\mu_x(j)-\nu(j)|
&\le
x\sum_{j\in\mathbb Z}|\mu_x(j)-\nu(j)|
+
\sum_{|j|>x}|j|\,\mu_x(j)
+
\sum_{|j|>x}|j|\,\nu(j).
\end{align*}
The first term is again bounded by $Cxe^{-cx}$ using
\eqref{lem:limiting-jump-law}. For the second term, 
using summation by parts and
\eqref{lem:uniform-tails}, we have
$$
\sum_{|j|>x}|j|\,\mu_x(j)
\le
\sum_{m>x}\mathbb P_x(|Y_1-Y_0|\ge m)
\le
Ce^{-cx}.
$$
The third term is bounded by $Ce^{-cx}$ as $\nu$ has Gaussian tails. 
Hence, \eqref{eq:weight.sum} is verified. This completes the
proof.
\end{proof}

\subsection{Excursion probabilities on the outside region}
\label{subsec:outside-kernels}
Throughout this subsection we keep $K\ge0$ fixed, and let
$$
Q_K:=(Q(x,y))_{x,y\in E_K}\quad
\text{and}\quad  \bar Q_K:=(\bar Q(x,y))_{x,y\in E_K}
$$
denote the transition probabilities of the Markov chains $Y$ and $S$
defined on $E_K$. In this subsection we compare the killed excursion probabilities of $Y$ $S$ outside the boundary region $W_K$.

For $x,y\in E_K$ and $n\ge1$, recall that $$
p_S^{(K)}(n;x,y)
:=
\mathbb P_x^S(S_1,\dots,S_{n-1}\in E_K, S_n=y),
$$ and define
$$
p_Y^{(K)}(n;x,y)
:=
\mathbb P_x^Y(Y_1,\dots,Y_{n-1}\in E_K, Y_n=y).
$$
We also use the convention
$$
p_Y^{(K)}(0;x,y)=p_S^{(K)}(0;x,y):=\mathbf 1_{\{x=y\}},
\quad x,y\in E_K.
$$
The following result is the Duhamel expansion for the difference of the two excursion transition probabilies.
\begin{lemma}[Duhamel formula for excursion probabilities]
\label{lem:duhamel-killed-kernels}
Let
$$
\Delta_K:=Q_K-\bar Q_K.
$$
Then, for every $m\ge1$,
and for all $x,y\in E_K$,
$$
p_Y^{(K)}(m;x,y)-p_S^{(K)}(m;x,y)
=
\sum_{j=0}^{m-1}\sum_{z,w\in E_K}
p_Y^{(K)}(j;x,z)\,\Delta_K(z,w)\,p_S^{(K)}(m-1-j;w,y).
$$
\end{lemma}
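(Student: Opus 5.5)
The identity is a purely algebraic one between killed transition matrices. I will prove it by a telescoping (Duhamel) argument, using only the matrix form of the killed kernels.

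First, I write both excursion probabilities as matrix powers on $E_K$. By the Markov property, summing over the intermediate positions $Y_1,\dots,Y_{m-1}\in E_K$ gives
\[
p_Y^{(K)}(m;x,y)=Q_K^m(x,y),\qquad p_S^{(K)}(m;x,y)=\bar Q_K^m(x,y),
\]
where $Q_K^m$ denotes the $m$-fold matrix product of the substochastic kernel $Q_K$ restricted to $E_K$. The same holds for $\bar Q_K$. The conventions for $m=0$ match $Q_K^0=\bar Q_K^0=I$. All entries are nonnegative and every row sum is at most $1$, so every product appearing below converges absolutely. Hence associativity and the rearrangement of sums over $z,w\in E_K$ are justified.

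Next, I use the telescoping identity
\[
Q_K^m-\bar Q_K^m=\sum_{j=0}^{m-1}\bigl(Q_K^{j+1}\bar Q_K^{m-1-j}-Q_K^{j}\bar Q_K^{m-j}\bigr)=\sum_{j=0}^{m-1}Q_K^{j}\,(Q_K-\bar Q_K)\,\bar Q_K^{m-1-j}.
\]
The first equality holds because consecutive terms cancel: the $j$-th term's first piece cancels the $(j+1)$-th term's second piece. What remains is the $j=m-1$ first piece $Q_K^m$ and the $j=0$ second piece $\bar Q_K^m$. The second equality factors out $Q_K^j$ on the left and $\bar Q_K^{m-1-j}$ on the right.

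Evaluating the $(x,y)$ entry and expanding the matrix products gives
\[
\sum_{j=0}^{m-1}\sum_{z,w\in E_K}Q_K^j(x,z)\,\Delta_K(z,w)\,\bar Q_K^{m-1-j}(w,y).
\]
This is exactly the claimed formula once each matrix power is replaced by $p_Y^{(K)}$ or $p_S^{(K)}$.

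The only point needing care is that $\Delta_K$ is a signed kernel, so I must check that the middle products converge absolutely. Since $\sum_w|\Delta_K(z,w)|\le 2$ and all the other factors are nonnegative and substochastic, each double sum is bounded by $2$ and Fubini applies. Alternatively, one can avoid signed kernels entirely: prove the identity by induction on $m$ via the decomposition $Q_K^{m}=Q_K^{m-1}\bar Q_K+Q_K^{m-1}\Delta_K$, which the telescoping sum encodes. Either way, there is no genuine obstacle; the lemma is an exact algebraic identity.
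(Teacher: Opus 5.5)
Your proof is correct and follows essentially the same route as the paper: you identify $p_Y^{(K)}(m;\cdot,\cdot)$ and $p_S^{(K)}(m;\cdot,\cdot)$ with $Q_K^m$ and $\bar Q_K^m$, then expand $Q_K^m-\bar Q_K^m=\sum_{j=0}^{m-1}Q_K^j(Q_K-\bar Q_K)\bar Q_K^{m-1-j}$. The only differences are cosmetic: the paper obtains this expansion by induction from $Q_K^m-\bar Q_K^m=Q_K^{m-1}(Q_K-\bar Q_K)+(Q_K^{m-1}-\bar Q_K^{m-1})\bar Q_K$ where you telescope directly, and your use of $\sum_w|\Delta_K(z,w)|\le 2$ to justify absolute convergence of the infinite-index products makes explicit a point the paper leaves implicit.
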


\begin{proof}
Note that
$$
p_Y^{(K)}(m;x,y)=(Q_K^m)(x,y),
\quad
p_S^{(K)}(m;x,y)=(\bar Q_K^m)(x,y).
$$
We notice that for $m\ge1$,
\begin{align*}
Q_K^{m}-\bar Q_K^{m}
=
Q_K^{m-1}(Q_K-\bar Q_K)+(Q_K^{m-1}-\bar Q_K^{m-1})\bar Q_K.
\end{align*}
Using the above identity and induction, we have
$$
Q_K^m-\bar Q_K^m
=
\sum_{j=0}^{m-1}Q_K^j(Q_K-\bar Q_K)\bar Q_K^{\,m-1-j}\quad
\text{for all } m\ge1.
$$
Taking the $(x,y)$-entry of both sides, we obtain the claimed formula.
\end{proof}

For $u, v\in W_K$, define
\begin{align}\label{AB.bound2}
    A_{K,u}:=\sum_{x\in E_K}Q(u,x)H_K(x+1),
\quad
B_{K,v}:=\sum_{y\in E_K}H_K(y)Q(y,v).
\end{align}
Note that ${A}_{K,u}, {B}_{K,v} \in (0,\infty)$ since $H$ has linear growth by \eqref{H.asym} while
$Q(u,x)=\mu_{u}(x-u), Q(y,v) =\mu_y(v-y)$
which have Gaussian tails in $x$ and $y$ respectively by \eqref{mu.tail}. For $u\in W_K$, $z\in E_K,\ n\ge0$ and $0\le s<1$, define
\begin{align*}
    f_Y^{(K)}(n;u,z)&:=\sum_{x\in E_K}Q(u,x)\,p_Y^{(K)}(n;x,z), \quad \widehat f_Y^{(K)}(s;u,z):=\sum_{n\ge0}f_Y^{(K)}(n;u,z)s^n,\\
    f_S^{(K)}(n;u,z)&:=\sum_{x\in E_K} Q(u,x)\,p_S^{(K)}(n;x,z), \quad \widehat f_S^{(K)}(s;u,z):=\sum_{n\ge0}f_S^{(K)}(n;u,z)s^n.
\end{align*}
\begin{lemma}
\label{lem:benchmark-mixed-excursion}
For every $K\ge0$, $u\in W_K$ and $z\in E_K$, we have
\begin{equation}\label{eq:fR-gf-asymptotic}
\widehat f_S^{(K)}(1;u,z)-\widehat f_S^{(K)}(s;u,z)
=
\frac{\sqrt2\,A_{K,u}}{\varsigma_\beta}\,H_K(z)\,\sqrt{1-s}
+
o\!\bigl(H_K(z)\sqrt{1-s}\bigr)
\quad \text{as }s\uparrow1.
\end{equation}
Moreover, there exists a constant $C<\infty$ such that for every $z\in E_K$ and every $n\ge1$,
\begin{equation}\label{eq:fR-upper}
f_S^{(K)}(n;u,z)\le C\,A_{K,u}\,H_K(z)\,n^{-3/2},
\end{equation}
and consequently, for every $0\le s<1$,
\begin{equation}\label{eq:fR-gf-bound}
0\le \widehat f_S^{(K)}(1;u,z)-\widehat f_S^{(K)}(s;u,z)
\le C\,A_{K,u}\,H_K(z)\,\sqrt{1-s}.
\end{equation}
\end{lemma}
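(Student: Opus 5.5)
The plan is to reduce everything to Lemma~\ref{prop:benchmark-first-return} by summing over the first landing point $x\in E_K$ with weights $Q(u,x)$. By definition,
\[
f_S^{(K)}(n;u,z)=\sum_{x\in E_K}Q(u,x)\,p_S^{(K)}(n;x,z).
\]
We first prove \eqref{eq:fR-upper}. Insert the uniform bound $p_S^{(K)}(n;x,z)\le C H_K(x+1)H_K(z)n^{-3/2}$ from Lemma~\ref{prop:benchmark-first-return} into this sum. This gives
\[
f_S^{(K)}(n;u,z)\le C H_K(z)n^{-3/2}\sum_{x\in E_K}Q(u,x)H_K(x+1)=C\,A_{K,u}H_K(z)n^{-3/2},
\]
by the definition \eqref{AB.bound2} of $A_{K,u}$.

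Next we prove \eqref{eq:fR-gf-bound}. Since $f_S^{(K)}(n;u,z)\ge0$, we have
\[
\widehat f_S^{(K)}(1;u,z)-\widehat f_S^{(K)}(s;u,z)=\sum_{n\ge1}f_S^{(K)}(n;u,z)(1-s^n)\ge 0.
\]
Note that $\widehat f_S^{(K)}(1;u,z)$ is finite by \eqref{eq:fR-upper}. The $n=0$ term cancels in the difference. For the upper bound, use \eqref{eq:fR-upper} together with the elementary estimate $\sum_{n\ge1}n^{-3/2}(1-s^n)\le C\sqrt{1-s}$. To verify that estimate, split at $n\approx(1-s)^{-1}$. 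Below that point use $1-s^n\le n(1-s)$; above it use $1-s^n\le1$. Alternatively, interchange the $x$-sum with the generating function, justified by Tonelli, and apply the third display of Lemma~\ref{prop:benchmark-first-return} termwise.

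For the asymptotic \eqref{eq:fR-gf-asymptotic}, again interchange sums by Tonelli:
\[
\widehat f_S^{(K)}(1;u,z)-\widehat f_S^{(K)}(s;u,z)=\sum_{x\in E_K}Q(u,x)\bigl[\widehat p_S^{(K)}(1;x,z)-\widehat p_S^{(K)}(s;x,z)\bigr].
\]
Divide by $H_K(z)\sqrt{1-s}$. For each fixed $x$, the summand converges to $\frac{\sqrt2}{\varsigma_\beta}Q(u,x)H_K(x+1)$ as $s\uparrow1$, by the second display of Lemma~\ref{prop:benchmark-first-return}. By the third display of that lemma, the summand is dominated uniformly in $s$ by $C\,Q(u,x)H_K(x+1)$. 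This dominating function is summable over $x$, because its sum is exactly $C A_{K,u}<\infty$; finiteness holds since $Q(u,\cdot)$ has Gaussian tails by \eqref{mu.tail} and $H_K$ grows linearly by \eqref{H.asym}. Dominated convergence then gives the limit $\frac{\sqrt2}{\varsigma_\beta}A_{K,u}$, which is \eqref{eq:fR-gf-asymptotic}.

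The only delicate point is that the $o(\cdot)$ term in Lemma~\ref{prop:benchmark-first-return} is pointwise in $x$, not uniform. This is exactly why the argument uses dominated convergence with the uniform third display, rather than summing error terms directly.
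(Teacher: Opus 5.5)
Your proposal is correct and takes essentially the same route as the paper. You insert the uniform bound of Lemma~\ref{prop:benchmark-first-return} into $f_S^{(K)}(n;u,z)=\sum_{x\in E_K}Q(u,x)\,p_S^{(K)}(n;x,z)$ to get \eqref{eq:fR-upper} and \eqref{eq:fR-gf-bound}, and obtain \eqref{eq:fR-gf-asymptotic} by exchanging sums and applying dominated convergence, with the pointwise asymptotic and the domination $C\,Q(u,x)H_K(x+1)$, which sums to $C A_{K,u}<\infty$; your observation that the $o(\cdot)$ term is only pointwise in $x$ is precisely why the paper also uses dominated convergence there.
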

\begin{proof}
Fix $K\ge0$, $u\in W_K$ and $z\in E_K$. By Lemma~\ref{prop:benchmark-first-return}, $p_S^{(K)}(n;x,z)\le C\,H_K(x+1)\,H_K(z)\,n^{-3/2}$ for each $x\in E_K, n\ge1$.
Hence
\begin{align*}
f_S^{(K)}(n;u,z)=\sum_{x\in E_K}Q(u,x)\,p_S^{(K)}(n;x,z)& \le
C\,H_K(z)\,n^{-3/2}\sum_{x\in E_K}Q(u,x)\,H_K(x+1)\\
&=
C\,A_{K,u}\,H_K(z)\,n^{-3/2},
\end{align*}
which proves \eqref{eq:fR-upper}. By the definition of $\widehat f_S^{(K)}$, we have
\begin{align*}
\widehat f_S^{(K)}(1;u,z)-\widehat f_S^{(K)}(s;u,z)
&=
\sum_{x\in E_K}Q(u,x)\sum_{n\ge0}p_S^{(K)}(n;x,z)(1-s^n)\\
&=
\sum_{x\in E_K}Q(u,x)\Bigl(\widehat p_S^{(K)}(1;x,z)-\widehat p_S^{(K)}(s;x,z)\Bigr).
\end{align*}
By Lemma~\ref{prop:benchmark-first-return}, for each fixed $x, z\in E_K$,
\begin{align*}
&\widehat p_S^{(K)}(1;x,z)-\widehat p_S^{(K)}(s;x,z)
=
\frac{\sqrt2}{\varsigma_\beta}\,H_K(x+1)\,H_K(z)\,\sqrt{1-s}
+
o\!\bigl(H_K(x+1)H_K(z)\sqrt{1-s}\bigr),\quad s\uparrow1,\\
&0\le \widehat p_S^{(K)}(1;x,z)-\widehat p_S^{(K)}(s;x,z)
\le
C\,H_K(x+1)\,H_K(z)\,\sqrt{1-s},
\quad 0\le s<1.
\end{align*}
Since
$
\sum_{x\in E_K}Q(u,x)\,H_K(x+1)=A_{K,u}<\infty,
$
the dominated convergence theorem implies
\begin{align*}
\widehat f_S^{(K)}(1;u,z)-\widehat f_S^{(K)}(s;u,z)
&=
\frac{\sqrt2}{\varsigma_\beta}\,H_K(z)\,\sqrt{1-s}
\sum_{x\in E_K}Q(u,x)\,H_K(x+1)
+
o\!\bigl(H_K(z)\sqrt{1-s}\bigr)\\
&=
\frac{\sqrt2\,A_{K,u}}{\varsigma_\beta}\,H_K(z)\,\sqrt{1-s}
+
o\!\bigl(H_K(z)\sqrt{1-s}\bigr),
\end{align*}
which is exactly \eqref{eq:fR-gf-asymptotic}. Finally,  using \eqref{eq:fR-upper}, we have 
\begin{align*}
0\le \widehat f_S^{(K)}(1;u,z)-\widehat f_S^{(K)}(s;u,z)
&=
\sum_{n\ge1}f_S^{(K)}(n;u,z)(1-s^n)\\
&\le
C\,A_{K,u}\,H_K(z)\sum_{n\ge1}n^{-3/2}(1-s^n)\le C\,A_{K,u}\,H_K(z)\,\sqrt{1-s}.
\end{align*}
This verifies \eqref{eq:fR-gf-bound}.
\end{proof}
\begin{lemma}\label{eq:f-upper}
There exists a constant $C<\infty$ and $K_0\in \N$ such that for all $K\ge K_0$,
\begin{equation*}
f_Y^{(K)}(n;u,z)\le C\,A_{K,u}\,H_K(z)\,n^{-3/2},
\quad z\in E_K, u\in W_K, \ n\ge1.
\end{equation*}
Consequently, for every fixed $z\in E_K$, $u\in W_K$, $K\ge K_0$,
\begin{equation*}
0\le \widehat f_Y^{(K)}(1;u,z)-\widehat f_Y^{(K)}(s;u,z)\le C A_{K,u}H_K(z)\,\sqrt{1-s}, \quad 0\le s< 1.
\end{equation*}
\end{lemma}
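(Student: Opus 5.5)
We compare $f_Y^{(K)}$ with the benchmark $f_S^{(K)}$ through the Duhamel formula of Lemma~\ref{lem:duhamel-killed-kernels}. Summing that identity against $Q(u,x)$ over $x\in E_K$ gives
\[
f_Y^{(K)}(m;u,y)=f_S^{(K)}(m;u,y)+\sum_{j=0}^{m-1}\sum_{z,w\in E_K} f_Y^{(K)}(j;u,z)\,\Delta_K(z,w)\,p_S^{(K)}(m-1-j;w,y).
\]
This is a renewal-type inequality for the sequence $a_m(y):=f_Y^{(K)}(m;u,y)$, with the inhomogeneous term already controlled by \eqref{eq:fR-upper}. The natural normalized quantity is
\[
M_N:=\sup_{1\le m\le N}\sup_{y\in E_K}\frac{m^{3/2}f_Y^{(K)}(m;u,y)}{A_{K,u}H_K(y)}.
\]
This is finite for each $N$, since $f_Y^{(K)}\le 1$ and $H_K\ge 1$ on $E_K$. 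The goal is a bound on $M_N$ that is uniform in $N$.

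We bound the correction term in three steps.

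\textbf{Step 1 (inner sum over $w$).} By Lemma~\ref{prop:benchmark-first-return}, $p_S^{(K)}(k;w,y)\le C H_K(w+1)H_K(y)k^{-3/2}$ for $k\ge1$. For $k=0$ we use the indicator instead. Then Lemma~\ref{lem:killed-kernel-comparison} gives
\[
\sum_w|\Delta_K(z,w)|H_K(w+1)\le Ce^{-cz}H_K(z).
\]
The $k=0$ term contributes $|\Delta_K(z,y)|$. This is also at most $Ce^{-cz}H_K(z)$, because $H_K(y+1)\ge1$.

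\textbf{Step 2 (sum over $z$).} For $1\le j\le N$ we use the induction bound $f_Y^{(K)}(j;u,z)\le M_N A_{K,u}H_K(z)j^{-3/2}$. This produces the factor
\[
\sum_{z>K}H_K(z)^2e^{-cz}\le C e^{-cK}.
\]
The $j=0$ term, $f_Y^{(K)}(0;u,z)=Q(u,z)$, is handled directly. Indeed $\sum_z Q(u,z)e^{-cz}H_K(z)\le e^{-cK}A_{K,u}$, since $H_K(z)\le H_K(z+1)$.

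\textbf{Step 3 (convolution).} Using $\sum_{j+k=m-1}(j\vee1)^{-3/2}(k\vee1)^{-3/2}\le C m^{-3/2}$, we obtain
\[
M_N\le C+C'e^{-cK}(1+M_N).
\]
Choose $K_0$ so that $C'e^{-cK_0}\le 1/2$. Then $M_N\le 2C+1$ for all $N$, which proves the first claim. The generating-function bound then follows exactly as in \eqref{eq:fR-gf-bound}, from $\sum_n n^{-3/2}(1-s^n)\le C\sqrt{1-s}$.

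The main obstacle is making the $z$-sum produce a factor that is small uniformly in $K$ while still carrying the weight $H_K(z)$ correctly. Two things must be checked. First, the weight $H_K(w+1)$ coming from the $p_S$ bound has to match the weight in Lemma~\ref{lem:killed-kernel-comparison}. Second, $e^{-cz}$ with $z>K$ must beat the growth $H_K(z)^2\sim(z-K)^2$, so that the sum is $O(e^{-cK})$ with a constant independent of $K$. Here the uniform comparability $H_K(z)\asymp z-K+1$ from \eqref{H.asym} is what we rely on. The boundary terms $j=0$ and $k=0$ of the convolution also need separate treatment, as in Steps 1 and 2, so that they do not spoil the $m^{-3/2}$ rate.
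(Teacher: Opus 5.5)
Your proposal is correct and follows essentially the same route as the paper: the Duhamel recursion against $Q(u,\cdot)$, the normalized supremum, the weighted perturbation bound, the $e^{-cK}$ smallness from $\sum_{z>K}H_K(z)^2e^{-cz}$, and the $m^{-3/2}$ convolution estimate. The only differences are cosmetic: you absorb $M_N$ on both sides using its a priori finiteness instead of the paper's induction $M_m\le C+\tfrac14(1+M_{m-1})$, and you treat the $k=0$ and $j=0$ boundary terms explicitly.
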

\begin{proof}
Using the Duhamel's formula in Lemma \ref{lem:duhamel-killed-kernels},
\begin{align*}
p_Y^{(K)}(m;x,z)-p_S^{(K)}(m;x,z)
&=
\sum_{j=0}^{m-1}\sum_{a,b\in E_K}
p_Y^{(K)}(j;x,a)\,\Delta_K(a,b)\,p_S^{(K)}(m-1-j;b,z).
\end{align*}
Multiply by $Q(u,x)$ and sum over $x\in E_K$, we get
\begin{align}
f_Y^{(K)}(m;u,z)
&=
f_S^{(K)}(m;u,z)
+\sum_{j=0}^{m-1}\sum_{a,b\in E_K}
f_Y^{(K)}(j;u,a)\,\Delta_K(a,b)\,p_S^{(K)}(m-1-j;b,z).
\label{eq:A-recursion}
\end{align}
By Lemma \ref{prop:benchmark-first-return},
$
p_S^{(K)}(m-1-j;b,z)\le C\,H_K(b+1)H_K(z)\,(m-j)^{-3/2}.
$
Therefore, using Lemma~\ref{lem:killed-kernel-comparison},
\begin{align*}
\sum_{b\in E_K}|\Delta_K(a,b)|\,p_S^{(K)}(m-1-j;b,z)
&\le
C H_K(z)(m-j)^{-3/2}
\sum_{b\in E_K}|\Delta_K(a,b)|\,H_K(b+1)\\
&\le
Ce^{-ca}H_K(a)H_K(z)(m-j)^{-3/2}.
\end{align*}
Also, by Lemma~\ref{lem:benchmark-mixed-excursion},
$
f_S^{(K)}(m;u,z)\le C {A}_{K,u}\,H_K(z)\,m^{-3/2}.
$
Taking absolute values in \eqref{eq:A-recursion} and using the previous bound,
we obtain
\begin{align}
f_Y^{(K)}(m;u,z)
&\le
C A_{K,u}H_K(z)m^{-3/2}
+
C H_K(z)\sum_{j=0}^{m-1}(m-j)^{-3/2}
\sum_{a\in E_K}f_Y^{(K)}(j;u,a)e^{-ca}H_K(a).
\label{eq:A-ineq}
\end{align}
Define
$$
M_m:=\sup_{1\le r\le m}\sup_{z\in E_K}\frac{f_Y^{(K)}(r;u,z)}{A_{K,u}H_K(z)\,r^{-3/2}},
\quad m\ge1.
$$
We have
\begin{align*}
\sum_{a\in E_K}f_Y^{(K)}(j;u,a)e^{-ca}H_K(a)
&\le
A_{K,u}M_{m-1}j^{-3/2}\sum_{a\in E_K}e^{-ca}H_K(a)^2.
\end{align*}
Since $a\in E_K$ implies $a\ge K+1$ and $H_K(a)\le C(a-K+1)$, the sum on the
right-hand side is bounded by $Ce^{-cK}$. Thus for $j\ge1$,
$$
\sum_{a\in E_K}f_Y^{(K)}(j;u,a)e^{-ca}H_K(a)\le Ce^{-cK}A_{K,u}M_{m-1}j^{-3/2}.
$$
The $j=0$ term is treated directly. Since
$f_Y^{(K)}(0;u,a)= Q(u,a)$ and $a\ge K+1$, we have
$$
\sum_{a\in E_K}f_Y^{(K)}(0;u,a)e^{-ca}H_K(a)\le Ce^{-cK}A_{K,u}.
$$
Substituting these bounds into \eqref{eq:A-ineq}, we obtain 
\begin{align*}
f_Y^{(K)}(m;u,z)
&\le
C {A}_{K,u}H_K(z)m^{-3/2}
+
C e^{-cK}A_{K,u}H_K(z)\Biggl[
m^{-3/2}
+
M_{m-1}\sum_{j=1}^{m-1}j^{-3/2}(m-j)^{-3/2}
\Biggr].
\end{align*}
Since
$
\sum_{j=1}^{m-1}j^{-3/2}(m-j)^{-3/2}\le C m^{-3/2},
$
we conclude that
$$
f_Y^{(K)}(m;u,z)\le
C\Bigl( 1+e^{-cK}(1+M_{m-1})\Bigr)A_{K,u}H_K(z)m^{-3/2}.
$$
Thus
$
M_m\le  C\big(1+ e^{-cK}(1+M_{m-1})\big).
$
Choosing $K_0$ sufficiently large such that $C e^{-cK}\le \frac14$ for all $K\ge K_0$, we obtain
$$
M_m\le C+\frac14(1+ M_{m-1}).
$$
A simple induction yields $\sup_m M_m<\infty$, and thus  
$$f_Y^{(K)}(n;u,z)\le C\,A_{K,u}\,H_K(z)\,n^{-3/2},
\quad z\in E_K, u\in W_K, \ n\ge1.$$
Moreover, we have
\begin{align*}
0\le \widehat f_Y^{(K)}(1;u,z)-\widehat f_Y^{(K)}(s;u,z)= \sum_{n\ge0}f_Y^{(K)}(n;u,z)(1-s^n)&\le
C\,A_{K,u}\,H_K(z)\sum_{n\ge1}n^{-3/2}(1-s^n)\\
&\le C \,A_{K,u}\,H_K(z)\sqrt{1-s}.
\end{align*}
This completes the proof.
\end{proof}

\begin{lemma}
\label{lem:mixed-perturbative-excursion}
There exists $K_0\in\mathbb N$ such that for every $K\ge K_0$ and every $u\in W_K$, there exists a function
$
C_{K,u}:E_K\to[0,\infty)
$
such that for every fixed $z\in E_K$, we have 
$$
\widehat f_Y^{(K)}(1;u,z)-\widehat f_Y^{(K)}(s;u,z)=C_{K,u}(z)\,H_K(z)\,\sqrt{1-s}
+
o\bigl(H_K(z)\,\sqrt{1-s}\bigr),
\quad s\uparrow1.
$$
In particular, there exists a constant $C\in (0,\infty)$ such that
$
C_{K,u}(z)\le C\,A_{K,u}$ for all
$z\in E_K, u\in W_K$ and $K\ge 0$.
\end{lemma}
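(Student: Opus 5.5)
The plan is to pass the Duhamel recursion \eqref{eq:A-recursion} to generating functions and then read off the $\sqrt{1-s}$ singularity term by term. Multiplying \eqref{eq:A-recursion} by $s^m$ and summing over $m\ge1$ gives, for $0\le s\le 1$,
\begin{align*}
\widehat f_Y^{(K)}(s;u,z)=\widehat f_S^{(K)}(s;u,z)+s\sum_{a,b\in E_K}\widehat f_Y^{(K)}(s;u,a)\,\Delta_K(a,b)\,\widehat p_S^{(K)}(s;b,z).
\end{align*}
The $m=0$ terms agree, since $f_Y^{(K)}(0;u,z)=f_S^{(K)}(0;u,z)=Q(u,z)$. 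All series converge absolutely at $s=1$. Indeed, Lemma~\ref{eq:f-upper} gives $\widehat f_Y^{(K)}(1;u,a)\le CA_{K,u}H_K(a)$, Lemma~\ref{prop:benchmark-first-return} gives $\widehat p_S^{(K)}(1;b,z)\le CH_K(b+1)H_K(z)$, and Lemma~\ref{lem:killed-kernel-comparison} gives $\sum_b|\Delta_K(a,b)|H_K(b+1)\le Ce^{-ca}H_K(a)$. Together these bound the double sum by $CA_{K,u}H_K(z)\sum_{a>K}e^{-ca}H_K(a)^2\le Ce^{-cK}A_{K,u}H_K(z)$.

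Next, subtract the identity at $s$ from the identity at $s=1$. Write $D_Y(s;a):=\widehat f_Y^{(K)}(1;u,a)-\widehat f_Y^{(K)}(s;u,a)$, and define $D_S(s;b,z)$ analogously for $\widehat p_S^{(K)}$. The product rule splits the difference into three pieces:
\begin{align*}
(1-s)\sum_{a,b}\widehat f_Y(1;u,a)\Delta_K(a,b)\widehat p_S(1;b,z),\quad s\sum_{a,b}D_Y(s;a)\Delta_K(a,b)\widehat p_S(1;b,z),\quad s\sum_{a,b}\widehat f_Y(s;u,a)\Delta_K(a,b)D_S(s;b,z).
\end{align*}
The first piece is $O(1-s)=o(\sqrt{1-s})$.

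For the third piece, Lemma~\ref{prop:benchmark-first-return} gives the exact asymptotic $D_S(s;b,z)\sim\frac{\sqrt2}{\varsigma_\beta}H_K(b+1)H_K(z)\sqrt{1-s}$, together with the domination $D_S\le CH_K(b+1)H_K(z)\sqrt{1-s}$. Also $\widehat f_Y(s;u,a)\to\widehat f_Y(1;u,a)$ monotonically. After dividing by $\sqrt{1-s}$, dominated convergence applies, with summable dominating function $A_{K,u}H_K(a)|\Delta_K(a,b)|H_K(b+1)H_K(z)$. This yields the limit
\begin{align*}
\frac{\sqrt2}{\varsigma_\beta}H_K(z)\sum_{a,b}\widehat f_Y^{(K)}(1;u,a)\Delta_K(a,b)H_K(b+1).
\end{align*}
This step needs the summability established in the first paragraph.

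The second piece is the main obstacle. Dividing it by $\sqrt{1-s}$ leaves $D_Y(s;a)/\sqrt{1-s}$, and its convergence is exactly what we are trying to prove. So the unknown enters through a linear fixed-point equation rather than being given. I would handle this by viewing the problem as a fixed-point equation in the weighted space
\begin{align*}
\mathcal B:=\Bigl\{g:E_K\to\R:\ \|g\|:=\sup_a|g(a)|/H_K(a)<\infty\Bigr\}.
\end{align*}
Define $\mathcal L g(z):=\sum_{a,b}g(a)\Delta_K(a,b)\widehat p_S^{(K)}(1;b,z)$. By the bounds above, $\|\mathcal L\|\le Ce^{-cK}\le\frac12$ for $K\ge K_0$. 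Setting $g_s(a):=D_Y(s;a)/\sqrt{1-s}$, the identity reads
\begin{align*}
g_s=\frac{D_{f_S}(s)}{\sqrt{1-s}}+s\,\mathcal L g_s+\text{(third piece)}/\sqrt{1-s}+o(1),
\end{align*}
where $D_{f_S}$ is the analogous difference for $\widehat f_S^{(K)}$. Lemma~\ref{eq:f-upper} gives $\|g_s\|\le CA_{K,u}$ uniformly in $s$.

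Because $\mathcal L$ is a contraction, any two subsequential limits of $g_s$ must coincide. To get limits at all, first extract pointwise subsequential limits $g_{s_k}\to g^*$ by a diagonal argument. These can be passed inside $\mathcal L$ by dominated convergence, using the bound $\|g_s\|\le CA_{K,u}$ and summability of $H_K(a)|\Delta_K(a,b)|H_K(b+1)$. The driving terms converge pointwise by Lemma~\ref{lem:benchmark-mixed-excursion} and the third-piece computation. Hence $g^*$ solves $g^*=G+\mathcal Lg^*$ with $G(z)=H_K(z)\cdot(\text{explicit constant})$. This equation has a unique solution in $\mathcal B$, namely $g^*=(I-\mathcal L)^{-1}G$. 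Therefore $g_s$ converges pointwise, and we set $C_{K,u}(z):=g^*(z)/H_K(z)$.

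Nonnegativity of $C_{K,u}$ is automatic, since $D_Y\ge0$. The bound $C_{K,u}(z)\le CA_{K,u}$ follows from $\|g^*\|\le\liminf\|g_s\|\le CA_{K,u}$ via Lemma~\ref{eq:f-upper}. The statement allows all $K\ge0$ here, but this bound is only available for $K\ge K_0$; for $K<K_0$ I would interpret the claim as restricted to $K\ge K_0$, as in the first assertion.
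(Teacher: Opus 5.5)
Your proposal is correct and is essentially the paper's proof. You use the same generating-function form of the Duhamel recursion, the same subtraction at $s$ and at $1$, and the same contraction of norm at most $Ce^{-cK}$: your operator $\mathcal L$ on the $H_K$-weighted sup-norm space is exactly the paper's $\mathcal G$ on $\ell^\infty(E_K)$ after dividing by $H_K$. The only difference is that the paper passes to the limit $s\uparrow1$ term by term in the Neumann series $\sum_{m\ge0}\mathcal G^m\psi_s$, whereas you use diagonal extraction plus uniqueness of the fixed point. Your remark that $C_{K,u}(z)\le CA_{K,u}$ is only obtained for $K\ge K_0$ is also correct, and matches what the paper's argument actually yields.
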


\begin{proof} We divide the proof into several steps.

\textbf{Step 1: Expansion of $\widehat f_Y^{(K)}$ via Duhamel formula.}
Fix $K\ge K_0$, $u\in W_K$, and $z\in E_K$.
summing \eqref{eq:A-recursion} over $m\ge0$ yields
\begin{equation}\label{eq:gf-main}
\widehat f_Y^{(K)}(s;u,z)
=
\widehat f_S^{(K)}(s;u,z)
+
\sum_{a\in E_K}\widehat f_Y^{(K)}(s;u,a)\,\kappa(s;a,z),
\end{equation}
where for $a,z\in E_K$, we define
$$
\kappa(s;a,z):=
s\sum_{b\in E_K}\Delta_K(a,b)\,\widehat{p}_S^{(K)}(s;b,z).
$$
Subtracting \eqref{eq:gf-main} at $s$ from the corresponding identity at $1$, we obtain
\begin{align}\label{eq:U-eq}
\widehat f_Y^{(K)}(1;u,z)-\widehat f_Y^{(K)}(s;u,z)
&=
\widehat f_S^{(K)}(1;u,z)-\widehat f_S^{(K)}(s;u,z)
+
\sum_{a\in E_K}\big(\widehat f_Y^{(K)}(1;u,a)-\widehat f_Y^{(K)}(s;u,a) \big)\kappa(1;a,z)\\
\nonumber &+\sum_{a\in E_K}\widehat f_Y^{(K)}(s;u,a)\bigl(\kappa(1;a,z)-\kappa(s;a,z)\bigr).
\end{align}

We first record the basic bounds. By Lemma \ref{eq:f-upper},
\begin{equation}\label{eq:fYhat-bound}
\widehat f_Y^{(K)}(s;u,a)\le \widehat f_Y^{(K)}(1;u,a)\le C\,A_{K,u}\,H_K(a),
\quad a\in E_K,\ 0\le s<1.
\end{equation}
By Lemma~\ref{prop:benchmark-first-return}, 
$
\widehat{p}_S^{(K)}(1;b,z)\le C\,H_K(b+1)\,H_K(z)$,
for each $b,z\in E_K.$
Using this bound together with Lemma~\ref{lem:killed-kernel-comparison}, we get
\begin{align}\label{eq:beta1-bound}
|\kappa(1;a,z)|
\le
\sum_{b\in E_K}|\Delta_K(a,b)|\,\widehat{p}_S^{(K)}(1;b,z)
\le
C e^{-ca}H_K(a)\,H_K(z),
\quad a,z\in E_K.
\end{align}

Next, by Lemma~\ref{prop:benchmark-first-return}, 
$$
\widehat{p}_S^{(K)}(1;b,z)-\widehat{p}_S^{(K)}(s;b,z)
=
\frac{\sqrt2}{\varsigma_\beta}\,H_K(b+1)\,H_K(z)\,\sqrt{1-s}
+
o\bigl(H_K(b+1)\,H_K(z)\,\sqrt{1-s}\bigr)
$$
as $s\uparrow1$, and 
$
|\widehat{p}_S^{(K)}(1;b,z)-\widehat{p}_S^{(K)}(s;b,z)|
\le
C\,H_K(b+1)\,H_K(z)\,\sqrt{1-s}.
$
Also,
\begin{align*}
    \kappa(1;a,z)-\kappa(s;a,z)
&=
\sum_{b\in E_K}\Delta_K(a,b)\,\bigl(\widehat{p}_S^{(K)}(1;b,z)-\widehat{p}_S^{(K)}(s;b,z)\bigr)\\
&+
(1-s)\sum_{b\in E_K}\Delta_K(a,b)\,\widehat{p}_S^{(K)}(s;b,z).
\end{align*}
Since $\widehat{p}_S^{(K)}(s;b,z)\le \widehat{p}_S^{(K)}(1;b,z)\le C\,H_K(b+1)\,H_K(z)$, the second term is bounded by
$$
C(1-s)e^{-ca}H_K(a)H_K(z)
=
o\bigl(e^{-ca}H_K(a)H_K(z)\sqrt{1-s}\bigr)
$$
as $s\uparrow1$.
Set
$$
\delta_K(a):=
\frac{\sqrt2}{\varsigma_\beta}\sum_{b\in E_K}\Delta_K(a,b)\,H_K(b+1),
\quad a\in E_K,$$
and note that
$|\delta_K(a)|\le C e^{-ca}H_K(a)$,
for each $a\in E_K.$
Hence
\begin{equation}\label{eq:beta-expansion}
\kappa(1;a,z)-\kappa(s;a,z)
=
\delta_K(a)\,H_K(z)\,\sqrt{1-s}
+
o\bigl(e^{-ca}H_K(a)\,H_K(z)\,\sqrt{1-s}\bigr),
\end{equation}
as $s\uparrow1$, with the uniform bound
\begin{equation}\label{eq:beta-bound}
|\kappa(1;a,z)-\kappa(s;a,z)|
\le
C e^{-ca}H_K(a)\,H_K(z)\,\sqrt{1-s}.
\end{equation}

\textbf{Step 2: Functional equation for normalized generating functions.} 
Now fix $K\ge 0$ and $u\in W_K$. For $y\in E_K$ and $0\le s<1$, define
\begin{align*}
F_y^Y(s)&:=\frac{\widehat f_Y^{(K)}(1;u,y)-\widehat f_Y^{(K)}(s;u,y)}{H_K(y)\sqrt{1-s}},
\quad
F^S_y(s):=\frac{\widehat f_S^{(K)}(1;u,y)-\widehat f_S^{(K)}(s;u,y)}{H_K(y)\sqrt{1-s}},\\
 W_y(s)&:=\frac{\sum_{a\in E_K}\widehat f_Y^{(K)}(s;u,a)\,\bigl(\kappa(1;a,y)-\kappa(s;a,y)\bigr)}{H_K(y)\sqrt{1-s}}.
\end{align*}
We first notice that by Lemma \ref{eq:f-upper},
\begin{equation}\label{eq:F-bound}
|F^Y_z(s)|\le C\,A_{K,u},
\quad z\in E_K,\ 0\le s<1.
\end{equation}
Also, by Lemma~\ref{lem:benchmark-mixed-excursion}, for every $z\in E_K$,
\begin{equation}\label{eq:R-bound}
F^S_z(s)\to \frac{\sqrt2\,A_{K,u}}{\varsigma_\beta}
\quad\text{as }s\uparrow1\quad\text{and}\quad |F^S_z(s)|\le C\,A_{K,u},
\quad 0\le s<1.
\end{equation}
For $W_z(s)$, we notice that by \eqref{eq:fYhat-bound} and \eqref{eq:beta-bound},
\begin{align}\label{W.bound}
    |W_z(s)|\le C A_{K,u}.
\end{align}
Using \eqref{eq:beta-expansion} and
 applying dominated convergence in the sum over $a$, we obtain that for every fixed $z\in E_K$,
$$
W_z(s)\to L_{K,u}:=\sum_{a\in E_K}\widehat f_Y^{(K)}(1;u,a)\,\delta_K(a)\quad \text{as $s\uparrow1$}.
$$
This sum is finite since $|\delta_K(a)|\le C e^{-ca}H_K(a)$ and $\widehat f_Y^{(K)}(1;u,a)\le C A_{K,u}H_K(a)$ by Lemma \ref{eq:f-upper}.

Next, define
$$
G_z(a):=\frac{H_K(a)\,\kappa(1;a,z)}{H_K(z)},
\quad a,z\in E_K.
$$
Then \eqref{eq:U-eq} becomes
\begin{equation}\label{eq:F-eq}
F_z^Y(s)
=
F^S_z(s)
+
\sum_{a\in E_K}G_z(a)\,F_a^Y(s)
+
W_z(s).
\end{equation}
Moreover, by \eqref{eq:beta1-bound},
$
|G_z(a)|\le C e^{-ca}H_K(a)^2$, for each $a,z\in E_K$. Since $a\ge K+1$ on $E_K$ and $H_K(a)\le C(a-K+1)$, we have
$
\sup_{z\in E_K}\sum_{a\in E_K}|G_z(a)|\le Ce^{-cK}.
$
Therefore, for sufficiently large $K_0$, we have
\begin{equation}\label{G.bound}
    \sup_{z\in E_K}\sum_{a\in E_K}|G_z(a)|\le \frac12
\quad\text{for all }K\ge K_0.
\end{equation}

Define function $\psi_s: E_K\to \R$ by 
$$\psi_s(z):=F^S_z(s)+W_z(s),
\quad z\in E_K.
$$
Then \eqref{eq:F-eq} becomes
$$
F_z^Y(s)=\psi_s(z)+\sum_{a\in E_K}G_z(a)\,F_a^Y(s).
$$
By \eqref{eq:R-bound} and \eqref{W.bound}, we have
$
\sup_{z\in E_K}|\psi_s(z)|\le C A_{K,u}$,
for each $0\le s<1.$
Also, for each fixed $z\in E_K$,
\begin{align}\label{def.g}
    \psi_s(z)\to \widetilde \psi(z)\equiv \frac{\sqrt2\,A_{K,u}}{\varsigma_\beta}+L_{K,u} \quad\text{as }s\uparrow1.
\end{align}

\textbf{Step 3: Solution to the functional equation.} Let $\mathcal{G}$ be the bounded linear operator on $\ell^\infty(E_K)$ defined by
$$
(\mathcal{G}\varphi)(z):=\sum_{a\in E_K}G_z(a)\,\varphi(a).
$$
By \eqref{G.bound}, we have $\|\mathcal{G}\|\le \frac12$, and thus
$
(I-\mathcal{G})^{-1}=\sum_{m\ge0}\mathcal{G}^m
$
on $\ell^\infty(E_K)$. Therefore,
$$
F_z^Y(s)=\big((I-\mathcal{G})^{-1}\psi_s\big)(z)=\sum_{m\ge0}(\mathcal{G}^m \psi_s)(z).
$$
For each fixed $m$ and fixed $z$, the series defining $(\mathcal{G}^m \psi_s)(z)$ is absolutely summable. Using the dominated convergence theorem, we get
$
(\mathcal{G}^m \psi_s)(z)\to (\mathcal{G}^m \widetilde{\psi})(z)$
as $s\uparrow1,
$
where $\widetilde{\psi}$ is the constant function defined in \eqref{def.g}.
Moreover,
$
|(\mathcal{G}^m \psi_s)(z)|\le \|\mathcal{G}\|^m\sup_{y\in E_K}|\psi_s(y)|
\le C\,2^{-m},
$
uniformly in $s$. Therefore, by dominated convergence in $m$,
$$
F_z^Y(s)\to C_{K,u}(z):=\sum_{m\ge0}(\mathcal{G}^m \widetilde{\psi})(z)
\quad\text{as }s\uparrow1
$$
for every fixed $z\in E_K$. Recalling the definition of $F_z^Y(s)$, we conclude that
$$
\widehat f_Y^{(K)}(1;u,z)-\widehat f_Y^{(K)}(s;u,z)=C_{K,u}(z)\,H_K(z)\,\sqrt{1-s}
+
o\bigl(H_K(z)\,\sqrt{1-s}\bigr),
\quad s\uparrow1.
$$

Moreover, by \eqref{eq:F-bound},
$
|F_z^Y(s)|\le C\,A_{K,u}$ for all $z\in E_K$ and $0\le s<1$.
Passing to the limit $s\uparrow1$, we obtain that
$
C_{K,u}(z)\le C\,A_{K,u}$ for all
$z\in E_K$.
This completes the proof.
\end{proof}

\subsection{First-return probabilities of $Y$}
\label{subsec:first-return-kernels}

For $u,v\in W_K$ and $n\ge1$, the first return probabilities of $Y$ are defined by
$$
\mathcal{K}_Y^{(K)}(n;u,v)
:=
\mathbb P_u^Y(\sigma_{W_K}^{+}=n, Y_n=v) \quad \text{with}\quad\sigma_{W_K}^{+}:=\inf\{m\ge1:Y_m\in W_K\}.
$$

\begin{proposition}
\label{prop:first-return-comparison}
There exists a sufficiently large $K_0$ such that for every fixed $K\ge K_0$ and $u,v\in W_K$, we have 
\begin{align}
&
\Lambda_K(u,v):=\sum_{y\in E_K}C_{K,u}(y)\,H_K(y)\,Q(y,v)
<\infty \quad \text{and}\\
&\sum_{n=1}^{\infty}\mathcal{K}_Y^{(K)}(n;u,v)(1-s^n)=\Lambda_K(u,v)\sqrt{1-s}+o(\sqrt{1-s})\quad \text{ as } s\uparrow  1.
\end{align}
\end{proposition}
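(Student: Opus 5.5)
We decompose the first return according to the last position visited in $E_K$ before re-entering $W_K$. For $n=1$, $\mathcal K_Y^{(K)}(1;u,v)=Q(u,v)$. For $n\ge 2$ the chain jumps from $u$ into some $x\in E_K$, spends $n-2$ further steps in $E_K$ ending at some $y\in E_K$, and then jumps to $v$. This gives
\begin{align*}
\mathcal K_Y^{(K)}(n;u,v)=\sum_{y\in E_K} f_Y^{(K)}(n-2;u,y)\,Q(y,v),\qquad n\ge 2 .
\end{align*}
Consequently
\begin{align*}
\sum_{n\ge1}\mathcal K_Y^{(K)}(n;u,v)(1-s^n)
&=Q(u,v)(1-s)+\sum_{y\in E_K}Q(y,v)\sum_{m\ge0}f_Y^{(K)}(m;u,y)(1-s^{m+2}).
\end{align*}
We then write $1-s^{m+2}=(1-s^m)+s^m(1-s^2)$. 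The inner sum becomes
\begin{align*}
\bigl(\widehat f_Y^{(K)}(1;u,y)-\widehat f_Y^{(K)}(s;u,y)\bigr)+(1-s^2)\,\widehat f_Y^{(K)}(s;u,y).
\end{align*}

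The terms of order $1-s$ are negligible. By Lemma~\ref{eq:f-upper}, $\widehat f_Y^{(K)}(s;u,y)\le \widehat f_Y^{(K)}(1;u,y)\le CA_{K,u}H_K(y)$. Hence
\begin{align*}
(1-s^2)\sum_{y\in E_K}Q(y,v)\,\widehat f_Y^{(K)}(s;u,y)\le C(1-s)A_{K,u}B_{K,v}=o(\sqrt{1-s}),
\end{align*}
using that $B_{K,v}<\infty$. The term $Q(u,v)(1-s)$ is also $o(\sqrt{1-s})$.

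For the main term we take $K\ge K_0$, with $K_0$ as in Lemmas~\ref{eq:f-upper} and \ref{lem:mixed-perturbative-excursion}. For each fixed $y$, Lemma~\ref{lem:mixed-perturbative-excursion} gives
\begin{align*}
\frac{\widehat f_Y^{(K)}(1;u,y)-\widehat f_Y^{(K)}(s;u,y)}{\sqrt{1-s}}\to C_{K,u}(y)H_K(y).
\end{align*}
Lemma~\ref{eq:f-upper} gives the uniform domination of this ratio by $CA_{K,u}H_K(y)$. Since $\sum_y H_K(y)Q(y,v)=B_{K,v}<\infty$, dominated convergence in $y$ yields
\begin{align*}
\frac{1}{\sqrt{1-s}}\sum_{y\in E_K}Q(y,v)\bigl(\widehat f_Y^{(K)}(1;u,y)-\widehat f_Y^{(K)}(s;u,y)\bigr)\to \Lambda_K(u,v).
\end{align*}

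Finiteness of $\Lambda_K(u,v)$ comes from the same domination. We have $C_{K,u}(y)\le CA_{K,u}$, so $\Lambda_K(u,v)\le CA_{K,u}B_{K,v}<\infty$. Here $B_{K,v}<\infty$ because $Q(y,v)=\mu_y(v-y)$ and $\mu_y(j)=0$ for $j<-y$; equivalently, the tail bound \eqref{lem:uniform-tails} gives $Q(y,v)\le Ce^{-c(y-v)}$, which is summable against the linear growth of $H_K$.

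The main obstacle is justifying the interchange of the limit $s\uparrow1$ with the sum over $y$. This rests entirely on the uniform (in $s$ and $y$) bound from Lemma~\ref{eq:f-upper}, together with the exponential decay of $Q(y,v)$ in $y$. One also has to be careful that the index shift in the decomposition of $\mathcal K_Y^{(K)}$ (using $n-2$ steps inside $E_K$) matches the convention $p_Y^{(K)}(0;x,y)=\mathbf 1_{\{x=y\}}$ built into $f_Y^{(K)}$.
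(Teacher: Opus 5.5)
Your proposal is correct and follows the paper's proof essentially step by step: the same last-exit decomposition $\mathcal K_Y^{(K)}(n;u,v)=\sum_{y}f_Y^{(K)}(n-2;u,y)Q(y,v)$, the same splitting $1-s^{m+2}=(1-s^m)+s^m(1-s^2)$, and dominated convergence in $y$ using Lemma~\ref{eq:f-upper} with $B_{K,v}<\infty$. The one blemish is that the remark ``$\mu_y(j)=0$ for $j<-y$'' is vacuous here, since $v\ge 0$ forces $v-y\ge -y$; what actually gives $Q(y,v)\le Ce^{-c(y-v)}$, and hence $B_{K,v}<\infty$, is the two-sided tail bound \eqref{lem:uniform-tails} that you cite next, not the upper-tail bound \eqref{mu.tail}.
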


\begin{proof} By Lemma~\ref{lem:mixed-perturbative-excursion},
$
C_{K,u}(y)\le C\,A_{K,u}$ for all $y\in E_K, u\in W_K$.
Hence
$$
\Lambda_K(u,v)
=
\sum_{y\in E_K}C_{K,u}(y)\,H_K(y)\,Q(y,v)
\le
C\,A_{K,u}\sum_{y\in E_K}H_K(y)\,Q(y,v)
=
C\,A_{K,u}\,B_{K,v}<\infty.
$$

Fix $K\ge K_0$ and $u,v\in W_K$. For $n\ge2$, a first return of $Y$ to $W_K$ at time $n$ and location $v$, started from $u\in W_K$, must proceed as follows:
\begin{itemize}
\item the first step moves from $u\in W_K$ into some $x\in E_K$,
\item the process has an excursion inside $E_K$ of length $n-2$ from $x$ to some $y\in E_K$,
\item the final step moves from $y\in E_K$ into $v\in W_K$.
\end{itemize}
Hence, for all $u,v\in W_K$ and all $n\ge2$, we have
\begin{equation}
\label{lem:first-return-decomposition}
\mathcal{K}_Y^{(K)}(n;u,v)
=
\sum_{x\in E_K}\sum_{y\in E_K}
Q(u,x)\,p_Y^{(K)}(n-2;x,y)\,Q(y,v)=\sum_{y\in E_K} f_Y^{(K)}(n-2;u,y)\,Q(y,v).
\end{equation}
Note also that $\mathcal{K}_Y^{(K)}(1;u,v)=Q(u,v)$. Therefore,
\begin{align*}
\sum_{n=1}^{\infty}\mathcal{K}_Y^{(K)}(n;u,v)(1-s^n)
&=Q(u,v)(1-s) +
\sum_{n\ge2}\sum_{y\in E_K} f_Y^{(K)}(n-2,u,y)\,Q(y,v)\,(1-s^n) \\
&= Q(u,v)(1-s)+
\sum_{y\in E_K}Q(y,v)\sum_{m\ge0}f_Y^{(K)}(m;u,y)\,(1-s^{m+2}).
\end{align*}
Now write $
1-s^{m+2}=(1-s^m)+s^m(1-s^2)$. Therefore
\begin{align}
\sum_{n=1}^{\infty}\mathcal{K}_Y^{(K)}(n;u,v)(1-s^n)
&= Q(u,v)(1-s)+
\sum_{y\in E_K}Q(y,v)\sum_{m\ge0}f_Y^{(K)}(m;u,y)(1-s^m)
\notag\\
&\quad
+(1-s^2)\sum_{y\in E_K}Q(y,v)\sum_{m\ge0}f_Y^{(K)}(m;u,y)s^m.
\label{eq:K-split}
\end{align}

We treat the second term and the last term on the right-hand side separately. For the second term, Lemma~\ref{lem:mixed-perturbative-excursion} gives, for each fixed $y\in E_K$,
$$
\sum_{m\ge0}f_Y^{(K)}(m;u,y)(1-s^m)
=
C_{K,u}(y)H_K(y)\sqrt{1-s}
+
o(H_K(y)\sqrt{1-s}),
\quad s\uparrow1.
$$
Moreover, by Lemma \ref{eq:f-upper},
$
\sum_{m\ge0}f_Y^{(K)}(m;u,y)(1-s^m)
\le
C\,A_{K,u}\,H_K(y)\,\sqrt{1-s}
$ for each $y\in E_K$ and $m\ge1$. Since
$
\sum_{y\in E_K}H_K(y)\,Q(y,v)=B_{K,v}<\infty,
$
the dominated convergence theorem yields
\begin{align}
\sum_{y\in E_K}Q(y,v)\sum_{m\ge0}f_Y^{(K)}(m;u,y)(1-s^m)
&=
\sqrt{1-s}\sum_{y\in E_K}C_{K,u}(y)H_K(y)Q(y,v)
+
o(\sqrt{1-s})
\notag\\
&=
\Lambda_K(u,v)\,\sqrt{1-s}
+
o(\sqrt{1-s}).
\label{eq:first-main-term}
\end{align}

For the last term in \eqref{eq:K-split}, since
$
\sum_{m\ge0}f_Y^{(K)}(m;u,y)s^m\le \sum_{m\ge0}f_Y^{(K)}(m;u,y)\le C\,A_{K,u}\,H_K(y)
$ by Lemma \ref{eq:f-upper},
we obtain
\begin{align}\nonumber
0\le
(1-s^2)\sum_{y\in E_K}Q(y,v)\sum_{m\ge0}f_Y^{(K)}(m;u,y)s^m
&\le
C\,(1-s^2)\,A_{K,u}\sum_{y\in E_K}H_K(y)\,Q(y,v) \\
&\le
C\,(1-s)\,A_{K,u}\,B_{K,v}=o(\sqrt{1-s}). \label{eq:second-term-small}
\end{align}
Finally, combining \eqref{eq:K-split}, \eqref{eq:first-main-term}, and \eqref{eq:second-term-small}, we conclude that
$$
\sum_{n=1}^{\infty}\mathcal{K}_Y^{(K)}(n;u,v)\,(1-s^n)
=
\Lambda_K(u,v)\,\sqrt{1-s}
+
o(\sqrt{1-s}),
\quad s\uparrow1.
$$
This completes the proof.
\end{proof}

\subsection{Ruin probability}
\label{subsec:finite-state-renewal}

For $u,v \in W_K$ and $n \ge 1$, recall that
$$
\mathcal K_Y^{(K)}(n;u,v)
:=
\mathbb P_u^Y(\sigma_{W_K}^{+} = n,\ Y_n = v)
$$
are the first-return probabilities of the forward chain $Y$ to $W_K$. In this subsection, we convert the asymptotics of the first-return probabilities into the exact asymptotic behavior of the ruin probability.

By Lemma \ref{lem:recurrent}, the Markov chain $Y$ is irreducible and recurrent, and thus
$$
\mathbb P_u^Y(\sigma_{W_K}^{+}<\infty)=1.
$$
Equivalently, the matrix $P_K=(P_K(u,v))_{u,v\in W_K}$, with
$$
P_K(u,v) := 
\sum_{n \ge 1} \mathcal K_Y^{(K)}(n;u,v)=\mathbb P_u^Y(\sigma_{W_K}^{+}<\infty,\ Y_{\sigma_{W_K}^{+}}=v),
$$
is stochastic.

For each $n\ge 0$, define matrices $$\mathcal{K}_n:=(\mathcal{K}_Y^{(K)}(n,u,v))_{u,v\in W_K},\quad 
\mathcal{U}_n:=
(\mathbb P_u^Y(Y_n = v))_{u,v\in W_K}
$$
and their generating functions 
$$
\widehat{\mathcal K}(s):=\sum_{n\ge1}s^n\mathcal K_n,
\quad
\widehat{\mathcal{U}}(s):=\sum_{n\ge0}s^n \mathcal{U}_n,
\quad 0\le s<1.
$$
Let
\begin{align}\label{def.T}
    T_0 := 0,
\quad
T_{m+1} := \inf\{n > T_m : Y_n \in W_K\},
\quad m \ge 0,
\end{align}
be the successive return times of the chain $Y$ to the boundary layer $W_K$.
Note that $\zeta_n:=(Y_{T_n}, T_n)$ is a Markov renewal process with finite state space $W_K$. The following result follows  from the Markov renewal equation of $\zeta_n$. We however, present a direct proof for the sake of comprehensiveness.
\begin{lemma}[Matrix renewal decomposition]
\label{prop:matrix-renewal}
For $0<s<1$, we have
\begin{equation}\label{eq:gen-renewal}
\widehat{\mathcal{U}}(s)=I+\widehat{\mathcal K}(s)+\widehat{\mathcal K}(s)^2\cdots=\bigl(I-\widehat{\mathcal K}(s)\bigr)^{-1}.
\end{equation}
\end{lemma}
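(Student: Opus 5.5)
The identity $\widehat{\mathcal U}(s)=\sum_{m\ge0}\widehat{\mathcal K}(s)^m$ comes from decomposing each path at its successive returns $T_1<T_2<\cdots$ to $W_K$, as in \eqref{def.T}. Invertibility of $I-\widehat{\mathcal K}(s)$ follows because the Neumann series converges for $0<s<1$.

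Fix $u,v\in W_K$ and $n\ge0$. On the event $\{Y_n=v\}$, with $Y_0=u\in W_K$, the time $n$ is a visit to $W_K$, so $n=T_m$ for a unique $m\ge0$. Hence
\begin{equation*}
\mathbb P_u(Y_n=v)=\sum_{m\ge0}\mathbb P_u(T_m=n,\ Y_{T_m}=v).
\end{equation*}
For $m=0$ the only contribution is $\mathbf 1_{\{n=0,\,u=v\}}$, which gives the identity term. For $m\ge1$, I apply the strong Markov property at the stopping times $T_1,\dots,T_{m-1}$. This writes $\mathbb P_u(T_m=n, Y_{T_m}=v)$ as the sum over $n_1+\cdots+n_m=n$ with $n_i\ge1$, and over $w_1,\dots,w_{m-1}\in W_K$, of
\begin{equation*}
\mathcal K_Y^{(K)}(n_1;u,w_1)\,\mathcal K_Y^{(K)}(n_2;w_1,w_2)\cdots\mathcal K_Y^{(K)}(n_m;w_{m-1},v).
\end{equation*}
In matrix form this is the $(u,v)$ entry of the $m$-fold convolution $(\mathcal K^{*m})_n$.

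Next I multiply by $s^n$ and sum over $n$. All terms are nonnegative, so Tonelli allows me to interchange the sums over $n$ and $m$. The convolution becomes a product of generating functions, which yields $\widehat{\mathcal U}(s)=\sum_{m\ge0}\widehat{\mathcal K}(s)^m$ entrywise, possibly with value $+\infty$ a priori. Finiteness is immediate: the entries of $\widehat{\mathcal U}(s)$ are at most $\sum_n s^n=(1-s)^{-1}$.

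To identify the series with $(I-\widehat{\mathcal K}(s))^{-1}$, I bound the row sums of $\widehat{\mathcal K}(s)$:
\begin{equation*}
\sum_v\widehat{\mathcal K}(s)(u,v)=\mathbb E_u\big[s^{\sigma^+_{W_K}};\,\sigma^+_{W_K}<\infty\big]\le s<1,
\end{equation*}
since $\sigma^+_{W_K}\ge1$. Thus $\|\widehat{\mathcal K}(s)\|_{\infty\to\infty}\le s<1$, so the Neumann series converges in norm and equals $(I-\widehat{\mathcal K}(s))^{-1}$. Equivalently, multiply the series by $I-\widehat{\mathcal K}(s)$ and telescope; the remainder $\widehat{\mathcal K}(s)^{M+1}$ tends to $0$ because its norm is at most $s^{M+1}$.

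The main point needing care is the strong Markov decomposition at the return times, specifically the bookkeeping that each visit to $W_K$ at time $n$ corresponds to exactly one index $m$. The interchange of sums is harmless by positivity, and the invertibility step follows from the row-sum bound $\le s$ alone.
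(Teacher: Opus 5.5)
Your proof is correct and follows the paper's argument: decompose at the successive return times $T_r$ to $W_K$, apply the strong Markov property to express $\mathcal U_n(u,v)$ as a sum of convolutions of the first-return kernels $\mathcal K_Y^{(K)}$, and pass to generating functions. Your row-sum bound $\sum_v \widehat{\mathcal K}(s)(u,v)\le s<1$ is a useful addition, since it makes explicit the convergence of the Neumann series and the invertibility of $I-\widehat{\mathcal K}(s)$, which the paper leaves implicit.
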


\begin{proof}
Recall that the stopping times $(T_n)_{n\ge0}$ are defined by \eqref{def.T}.
Fix $u,v \in W_K$ and $n \ge 0$. If $Y_0 = u$ and $Y_n = v$, then either
$n = 0$ and $u = v$, or else there exists a unique $r\ge1$ such that
$
T_r=n.
$
By the strong Markov property at the return times $T_1,T_2,\dots,T_{r-1}$, the corresponding excursion increments are independent and each has law given by the first-return probability matrix $\mathcal K_Y^{(K)}$. Therefore,
\begin{align*}
&\mathbb P_u^Y\bigl(T_r=n,\ Y_n=v,\ T_1=n_1,\dots,T_r-T_{r-1}=n_r,Y_{T_1}=u_1,\dots,Y_{T_{r-1}}=u_{r-1}\bigr)\\
&\quad=
\mathcal K_Y^{(K)}(n_1;u,u_1)\,
\mathcal K_Y^{(K)}(n_2;u_1,u_2)\cdots
\mathcal K_Y^{(K)}(n_r;u_{r-1},v).
\end{align*}
Summing over all $r\ge1$, all compositions $n_1+\cdots+n_r=n$ with $n_i\ge1$, and all intermediate states $u_1,\dots,u_{r-1}\in W_K$, we obtain
$$\mathbf 1_{\{n=0,\ u=v\}}
+
\sum_{r=1}^{n}
\ \sum_{\substack{n_1,\dots,n_r\ge 1\\ n_1+\cdots+n_r=n}}
\ \sum_{u_1,\dots,u_{r-1}\in W_K}
\mathcal K_Y^{(K)}(n_1;u,u_1)\,
\mathcal K_Y^{(K)}(n_2;u_1,u_2)\cdots
\mathcal K_Y^{(K)}(n_r;u_{r-1},v),$$
where term $\mathbf 1_{\{n=0,\ u=v\}}$ accounts for the trivial case $n=0$. Taking the matrix generating function, we obtain the result of the lemma. 
\end{proof}

Since $P_K$ is a positive stochastic matrix on the finite set $W_K$, the
Perron--Frobenius eigenvalue $1$ is simple, with right eigenvector ${\bf 1}$
and left eigenvector ${\boldsymbol \pi}_K$. Set
$$
\gamma_K:={\boldsymbol \pi}_K\Lambda_K{\bf 1}.
$$ The next proposition gives the asymptotic order of the ruin probability.
\begin{proposition}
\label{thm:ruin-order}
For a fixed sufficiently
large $K$, we have 
$$
r_n=\mathbb P_0(Y_{n-1}=0)
=
\mathcal{U}_{n-1}(0,0)
\sim
\frac{{\boldsymbol \pi}_K(0)}{\sqrt{\pi}\,\gamma_K}\,n^{-1/2}.
$$
\end{proposition}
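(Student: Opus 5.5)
The plan is to combine the matrix renewal identity of Lemma~\ref{prop:matrix-renewal} with the square-root expansion of Proposition~\ref{prop:first-return-comparison}, and then pass from generating functions to coefficients by a Tauberian argument. Write $\widehat{\mathcal K}(s)=P_K-\bigl(P_K-\widehat{\mathcal K}(s)\bigr)$. Proposition~\ref{prop:first-return-comparison} says entrywise that
\begin{equation*}
P_K-\widehat{\mathcal K}(s)=\sum_{n\ge1}\mathcal K_n(1-s^n)=\Lambda_K\sqrt{1-s}+o(\sqrt{1-s}),\qquad s\uparrow1,
\end{equation*}
and since $W_K$ is finite this holds in matrix norm. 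Hence $I-\widehat{\mathcal K}(s)=(I-P_K)+\Lambda_K\sqrt{1-s}+o(\sqrt{1-s})$. This is a singular perturbation of $I-P_K$: the matrix $I-P_K$ has a simple zero eigenvalue, with right eigenvector $\mathbf 1$ and left eigenvector $\boldsymbol\pi_K$.

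The key step is to invert $I-\widehat{\mathcal K}(s)$ asymptotically. I would use the standard perturbation formula for a matrix $A+\epsilon B+o(\epsilon)$ with $A$ having a simple zero eigenvalue, left and right eigenvectors $\boldsymbol\pi_K$ and $\mathbf 1$, and $\boldsymbol\pi_K B\mathbf 1\neq0$:
\begin{equation*}
(A+\epsilon B+o(\epsilon))^{-1}=\frac{\mathbf 1\boldsymbol\pi_K}{\epsilon\,\boldsymbol\pi_K B\mathbf 1}+O(1).
\end{equation*}
Concretely, decompose $\mathbb R^{W_K}=\mathrm{span}(\mathbf 1)\oplus\ker\boldsymbol\pi_K$; on the second summand $I-P_K$ is invertible. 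A Schur-complement computation then gives the leading term. With $\epsilon=\sqrt{1-s}$ and $B=\Lambda_K$, this yields
\begin{equation*}
\widehat{\mathcal U}(s)(0,0)\sim\frac{\boldsymbol\pi_K(0)}{\gamma_K}(1-s)^{-1/2}.
\end{equation*}
This needs $\gamma_K>0$. That holds because $\Lambda_K(u,v)\ge0$, and $\Lambda_K\neq0$: otherwise the excursion generating function would be analytic at $s=1$, contradicting null recurrence (Lemma~\ref{lem:recurrent}), since the return times to $W_K$ would then have finite mean. Alternatively, one can verify $C_{K,u}>0$ directly from the Neumann series, since $\widetilde\psi\ge\sqrt2A_{K,u}/\varsigma_\beta-|L_{K,u}|$ and $|L_{K,u}|=O(e^{-cK}A_{K,u})$.

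To pass to coefficients, I would use the fact that $\mathcal U_n(0,0)=\mathbb P_0(Y_n=0)$ is nonnegative. Karamata's Tauberian theorem then gives
\begin{equation*}
\sum_{k\le n}\mathcal U_k(0,0)\sim\frac{\boldsymbol\pi_K(0)}{\gamma_K\Gamma(3/2)}n^{1/2}.
\end{equation*}
To get the local statement $\mathcal U_n(0,0)\sim\frac{\boldsymbol\pi_K(0)}{\gamma_K\Gamma(1/2)}n^{-1/2}$, with $\Gamma(1/2)=\sqrt\pi$, I need monotonicity or some regularity of $\mathcal U_n(0,0)$. This is the main obstacle, since $\mathcal U_n(0,0)$ need not be monotone.

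I would first try to obtain a ratio-type regularity from the renewal structure. The first-return kernel has the bound $\mathcal K_n\le C n^{-3/2}$ entrywise, from Lemma~\ref{eq:f-upper} and \eqref{lem:first-return-decomposition}, and $\mathcal K_1(0,0)=Q(0,0)>0$ gives aperiodicity. With these I would apply a matrix version of the strong renewal theorem for infinite-mean kernels with index $1/2$ (Garsia--Lamperti, Erickson). For index $\alpha=1/2>1/4$ this theorem holds under regular variation of the tail alone, which follows from the generating-function asymptotics. Alternatively, one can reduce to the scalar case: look at returns to the single state $0$, whose first-return generating function inherits a $\sqrt{1-s}$ expansion, and invoke Erickson's strong renewal theorem for $\alpha=1/2$. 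Finally, replacing $n$ by $n-1$ does not change the asymptotics, which gives $r_n\sim\frac{\boldsymbol\pi_K(0)}{\sqrt\pi\gamma_K}n^{-1/2}$.
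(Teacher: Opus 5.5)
\textbf{The gap is in your last step, the passage from generating functions to coefficients.} Everything before it is sound. Your Schur-complement inversion is equivalent to the paper's eigenvalue-perturbation argument and yields the same $\widehat{\mathcal U}(s)(0,0)\sim\frac{\boldsymbol\pi_K(0)}{\gamma_K}(1-s)^{-1/2}$. Your Neumann-series argument for $C_{K,u}>0$, hence $\gamma_K>0$, is a useful addition that the paper omits. Your first justification of $\gamma_K>0$ does not work: $\Lambda_K=0$ would only give $o(\sqrt{1-s})$, which does not imply $O(1-s)$ or a finite mean.

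The trouble is the strong renewal theorem. Under regular variation of the tail alone, the Garsia--Lamperti/Erickson theorem holds only for index $\alpha>1/2$. For $\alpha\le 1/2$, including $\alpha=1/2$, it can fail unless an extra local condition is imposed, for example Doney's $f_n=O(\bar F(n)/n)$; necessary and sufficient conditions are due to Caravenna and Doney. Your scalar reduction only supplies the $\sqrt{1-s}$ expansion of the first-return generating function to $0$, which is exactly regular variation, so it does not give $\mathcal U_n(0,0)\sim c\,n^{-1/2}$. The entrywise bound $\mathcal K_n\le Cn^{-3/2}$ is the right kind of extra input. To use it, though, you would have to transfer it to the scalar law $\P_0(\sigma_0^+=n)$, or prove a Markov-renewal version of Doney's theorem, and you do neither.

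\textbf{The detour is also unnecessary, because your claim that $\mathcal U_n(0,0)$ need not be monotone is false.} The sequence $r_n$ is non-increasing. The TSAWs on $\{0,\dots,n\}$ and on $\{0,\dots,n+1\}$ have the same law up to the hitting time of $n$, and reaching $n+1$ requires reaching $n$ first, so $r_{n+1}\le r_n$.

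Equivalently, $Y$ is stochastically monotone. Under $Q(z,\cdot)$ one has $Y_1=z+1+\eta_{z+1}$, and the increments of $\eta$ are at least $-1$, so $m\mapsto m+\eta_m$ is pathwise non-decreasing. Hence $Q(z',\cdot)\succeq Q(z,\cdot)$ for $z'\ge z$. Starting from the minimal state $0$ this gives $Y_{n+1}\succeq Y_n$, so $\P_0(Y_n=0)$ is non-increasing in $n$.

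With monotonicity, the monotone-density form of Karamata's Tauberian theorem (Corollary 1.7.3 in \cite{BGT1987}) turns $(1-s)^{-1/2}$ directly into $n^{-1/2}/\Gamma(1/2)$. This is how the paper concludes.
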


\begin{proof}
Recall from the matrix renewal decomposition in Lemma \ref{prop:matrix-renewal}, 
\begin{equation*}
\widehat{\mathcal{U}}(s)=\bigl(I-\widehat{\mathcal K}(s)\bigr)^{-1}.
\end{equation*}
We first identify the singular behavior of $\widehat{\mathcal K}(s)$ near
$s=1$. Note that $P_K=\widehat{\mathcal K}(1)$. By Proposition \ref{prop:first-return-comparison},
$$
P_K(u,v)-\widehat{\mathcal K}(s)(u,v)
=
\Lambda_K(u,v)\,\sqrt{1-s}
+
o(\sqrt{1-s}),
\quad s\uparrow1.
$$
Equivalently,
$$
\widehat{\mathcal K}(s)
=
P_K-\Lambda_K\,\sqrt{1-s}
+
o(\sqrt{1-s}),
\quad s\uparrow1.
$$
Let $\Pi_K:={\bf 1}\,{\boldsymbol \pi}_K$ be the rank-one spectral projection of the stochastic matrix $P_K$
corresponding to the eigenvalue $1$. Let $\lambda(s)$ denote the
eigenvalue of $\widehat{\mathcal K}(s)$ converging to $1$ as $s\uparrow1$. Then
\begin{align}
    \label{lambda.idn}
    \lambda(s)
=
1-{\boldsymbol \pi}_K\Lambda_K{\bf 1}\,\sqrt{1-s}
+
o(\sqrt{1-s})
=
1-\gamma_K\,\sqrt{1-s}
+
o(\sqrt{1-s}),
\quad s\uparrow1.
\end{align}
By continuity of the eigenvalues of a finite matrix, there exists $\delta>0$ such
that all eigenvalues of $\widehat{\mathcal K}(s)$ other than $\lambda(s)$ have
modulus at most $1-\delta$ for all $s$ sufficiently close to $1$. Let $\Pi(s)$ be
the spectral projection corresponding to $\lambda(s)$, and write
$$
\widehat{\mathcal K}(s)=\lambda(s)\Pi(s)+N(s),
$$
where
$
\Pi(s)N(s)=N(s)\Pi(s)=0$ and $\rho(N(s))\le 1-\delta.
$
Then
$$
I-\widehat{\mathcal K}(s)
=
(1-\lambda(s))\Pi(s)+(I-N(s))(I-\Pi(s)),
$$
and therefore
$$
\bigl(I-\widehat{\mathcal K}(s)\bigr)^{-1}
=
\frac{1}{1-\lambda(s)}\,\Pi(s)
+
(I-N(s))^{-1}(I-\Pi(s)).
$$
The second term is bounded as $s\uparrow1$ because $\rho(N(s))\le 1-\delta$.
Moreover, since $\widehat{\mathcal K}(s)\to P_K$, we have $\Pi(s)\to \Pi_K$ as
$s\uparrow1$. Hence
$$
\bigl(I-\widehat{\mathcal K}(s)\bigr)^{-1}
=
\frac{1}{1-\lambda(s)}\,\Pi_K+O(1),
\quad s\uparrow1.
$$
Using the expansion \eqref{lambda.idn}, we obtain
$$
\widehat{\mathcal{U}}(s)
=
\frac{1}{\gamma_K}\,(1-s)^{-1/2}\,\Pi_K+O(1),
\quad s\uparrow1.
$$
Notice that the coefficients of $(1-s)^{-1/2}=\sum_{n=0}^{\infty} a_n s^n$ satisfy
$$
a_n=\frac{\binom{2n}{n}}{4^n}\sim \frac{1}{\sqrt{\pi}}\,n^{-1/2},
\quad n\to\infty.
$$

On the other hand, using the fact that $Q(z,0)$ is decreasing in $z$, one can show by stochastic domination that $\mathcal{U}_{n}(0,0)=\mathbb P_0(Y_{n}=0)$ is {non-increasing in $n$}. Therefore, by the Karamata’s Tauberian theorem for nonnegative power
series (see Corollary 1.7.3 in \cite{BGT1987}), we obtain
$$
\mathcal{U}_n(0,0)
\sim
\frac{{\boldsymbol \pi}_K(0)}{\gamma_K}\cdot\frac{1}{\sqrt{\pi}}\,n^{-1/2}.
$$
Since, by Lemma~\ref{prop:auxiliary-transition},
$
r_n=\mathbb P_0(Y_{n-1}=0)=\mathcal{U}_{n-1}(0,0),
$
we conclude that
$$
r_n
\sim
\frac{{\boldsymbol \pi}_K(0)}{\sqrt{\pi}\,\gamma_K}\,n^{-1/2}.
$$
\end{proof}

\subsection{Interval estimates for the killed chain}\label{subsec:interval-killed}

For $j\ge 1$, let
$$
A_j:=\{1,\dots,j-1\}.
$$
In this subsection we prove an estimate for the probability of the joint event
that $Y_n$ takes values in $A_j$ and $\sigma_0^+>n$. This estimate will be used
in Section~\ref{sec:gen.ruin} to bound generalized ruin probabilities.

For each $a\in \Z_+$, let $$\sigma_a:=\inf\{n\ge 0:Y_n=a\},\quad \sigma_a^+:=\inf\{n\ge 1:Y_n=a\}, \quad\widetilde{\sigma}_a:=\inf\{n\ge 0:Y_n\ge a\}.$$
For $L\ge 2$, let
$$
T_L:
=\sigma_0\wedge \widetilde{\sigma}_L=
\inf\{n\ge 0:Y_n=0 \text{ or } Y_n\ge L\}.
$$
\begin{lemma}\label{lem:box-green-j2}
There exists a constant $C<\infty$ such that for all integers $L\ge 2$ and
$1\le j\le L/2$,
$$
\sup_{1\le y<L}
\E_y\!\left[\sum_{k=0}^{T_L-1}\mathbf 1_{\{Y_k\in A_j\}}\right]
\le C j^2.
$$
\end{lemma}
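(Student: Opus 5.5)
The plan is a Lyapunov (Dynkin) argument. I will construct a function $g_j:\mathbb Z_+\to[0,\infty)$ with three properties:
\begin{itemize}
\item $g_j(0)=0$ and $\sup_x g_j(x)\le Cj^2$;
\item $P_+g_j(x)-g_j(x)\le -c$ for all $x\in A_j$;
\item $P_+g_j(x)-g_j(x)\le 0$ for all $x\ge 1$ with $x<L$.
\end{itemize}
Here $P_+\phi(x)=\mathbb E_x[\phi(Y_1)\mathbf 1_{\{Y_1>0\}}]$ as in the proof of Lemma~\ref{lem:harmonic}. Then $g_j(Y_{n\wedge T_L})+c\sum_{k<n\wedge T_L}\mathbf 1_{\{Y_k\in A_j\}}$ is a nonnegative supermartingale. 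Since $T_L<\infty$ a.s. by Lemma~\ref{lem:recurrent}, optional stopping with monotone convergence gives
\[
\mathbb E_y\Big[\sum_{k<T_L}\mathbf 1_{\{Y_k\in A_j\}}\Big]\le g_j(y)/c\le Cj^2 .
\]

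To remove the first-order drift, which for small $x$ is not small since $m_1(x)$ is only $O(e^{-cx})$, I will build $g_j$ from the harmonic function $h$ of Lemma~\ref{lem:harmonic} rather than from $x$ itself. Since $|h(x)-x|\le C_0$, take $M$ large (depending only on $C_0$) and set $\phi(t):=2Mjt-t^2$ for $0\le t\le Mj$ and $\phi(t):=M^2j^2$ for $t\ge Mj$. This $\phi$ is concave, nondecreasing, $C^1$, and satisfies $\phi(0)=0$. Define $g_j:=\phi\circ h$. For $x$ with $h(x)\le Mj$, a second-order expansion plus the harmonicity $P_+h=h$ (with $h(0)=0=\phi(0)$) gives
\[
P_+g_j(x)-g_j(x)=-\,\mathbb E_x\big[(h(Y_1)-h(x))^2\big]+\mathbb E_x\big[\phi(h(Y_1))-\phi_{\rm quad}(h(Y_1))\big],
\]
where $\phi_{\rm quad}(t)=2Mjt-t^2$ and the last expectation is nonnegative. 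That correction term is only active when $h(Y_1)>Mj$.
\begin{itemize}
\item For $x\in A_j$ we have $h(x)\le j+C_0$, so this requires a jump of size at least $(M-1)j-2C_0$. By \eqref{lem:uniform-tails} and the polynomial size of $\phi$ it is bounded by $Cj^2e^{-c(M-1)j}$, which is negligible for $j\ge j_0$.
\item For $x$ with $h(x)\le Mj$ but $x\notin A_j$, concavity of $\phi$ alone gives $P_+g_j\le \phi(P_+h)=g_j$ by Jensen. So the third bullet holds everywhere, including where $h(x)\ge Mj$, where $\phi$ is constant and again Jensen applies.
\end{itemize}

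The main obstacle is the uniform lower bound $\inf_{x\ge1}\mathbb E_x[(h(Y_1)-h(x))^2]\ge c_1>0$ (with $Y_1=0$ counted as $h(0)=0$).
\begin{itemize}
\item For each fixed $x$ the quantity is positive: $Q(x,0)>0$ and $h(x)>0$. The latter holds because $h(x)\ge 0$, and $h(x)=0$ for some $x\ge1$ would force $h(Y_1)=0$ a.s., propagating to all states by irreducibility, which contradicts $h(z)=z+O(1)$.
\item For large $x$ I would use \eqref{lem:limiting-jump-law}: $\mu_x$ is exponentially close to the nondegenerate law $\nu$. Moreover $h(x+k)-h(x)=k+O(1)$ with a uniform $O(1)$, so I will instead use $\mathbb E(h(Y_1)-h(x))^2\ge \mathbb P_x(|Y_1-x|\ge 3C_0)\cdot C_0^2$, which is bounded below by $\nu(\{|k|\ge 3C_0\})/2>0$ for $x$ large.
\item The finitely many remaining $x$ are covered by the pointwise positivity.
\end{itemize}
Once this is in place, the expansion gives the drift bound $-c_1+o(1)\le -c_1/2$ on $A_j$ for $j\ge j_0$, and $\sup g_j\le M^2j^2$.

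Finally, the small values $1\le j<j_0$ reduce to the fact that $A_{j_0}$ is a fixed finite set. From any $x\in A_{j_0}$, $Q(x,0)\ge \min_{x<j_0}Q(x,0)=:p_0>0$, so the number of visits to $A_{j_0}$ before $\sigma_0$ is stochastically dominated by a geometric variable with mean $1/p_0$, uniformly in $L$ and $y$. This gives the bound $C\le Cj^2$ in that range as well.
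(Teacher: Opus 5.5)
Your argument is correct, but it takes a different route from the paper.

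The paper argues site by site. It writes the occupation time of $A_j$ as $\sum_{a=1}^{j-1}G_L(y,a)$ and bounds $G_L(y,a)\le G_L(a,a)=1/p_{a,L}$. It then proves the escape estimate $\P_a(\sigma_0<\sigma_a^+)\ge c/a$ by combining a downward jump of size $r$ (probability at least $\eta$) with optional stopping of $h$ at $\sigma_0\wedge\widetilde\sigma_a$. This shows that from below $a-r$ the chain hits $0$ before climbing back to $[a,\infty)$ with probability of order $1/a$. Summing $a/c$ over $a<j$ gives $Cj^2$.

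You instead use the single Lyapunov function $g_j=\phi\circ h$. Harmonicity of $h$ cancels the first-order drift exactly, so on $A_j$ the drift of $g_j$ is $-\E_x[(h(Y_1)-h(x))^2]$ up to an exponentially small truncation error. Jensen's inequality for the concave $\phi$ gives $P_+g_j\le g_j$ everywhere else. The whole proof then reduces to the uniform bound $\inf_{x\ge1}\E_x[(h(Y_1)-h(x))^2]>0$. You justify this correctly: $|h(z)-z|\le C_0$ uniformly, $\mu_x$ converges to the fully supported law $\nu$, and $h>0$ on $\{1,2,\dots\}$ because $Q(x,y)>0$ for all $y$.

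What your route buys is that it never uses $L$. The nonnegative supermartingale plus monotone convergence gives directly $\sup_{y\ge1}\E_y[\sum_{k<\sigma_0}\mathbf 1_{\{Y_k\in A_j\}}]\le Cj^2$. This needs neither the restriction $j\le L/2$ nor $T_L<\infty$, and it is exactly the form used in Lemma~\ref{lem:interval-killed}, making the limiting step there superfluous. Your separate treatment of $j<j_0$ can also be dropped. Since $c_1$ does not depend on $M$, you can take $M$ large enough that the truncation error is below $c_1/2$ for every $j\ge1$.

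The paper's route, in exchange, produces the per-site bounds $G_L(a,a)\le a/c$ and $p_a\ge c/a$. It reuses these in the proof of Lemma~\ref{lem:survival-x-over-sqrtn}.
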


\begin{proof}
For $1\le a<L$, define
$$
G_L(y,a):=
\E_y\!\left[\sum_{k=0}^{T_L-1}\mathbf 1_{\{Y_k=a\}}\right].
$$
Then
$$
\E_y\!\left[\sum_{k=0}^{T_L-1}\mathbf 1_{\{Y_k\in A_j\}}\right]
=
\sum_{a=1}^{j-1}G_L(y,a).
$$
By the strong Markov property at time $\sigma_a$, we note that
$
G_L(y,a)=\P_y(\sigma_a<T_L)\,G_L(a,a)\le G_L(a,a).
$
Hence, to complete the proof, it is sufficient to show that $$
G_L(a,a)\le C a
\quad\text{for all }1\le a<L/2.
$$

Let
$
p_{a,L}:=\P_a(T_L<\sigma_a^+).
$
The number of visits to $a$ before $T_L$ is geometric with success
parameter $p_{a,L}$, and therefore
$
G_L(a,a)=1/{p_{a,L}}.
$
Hence, it is enough to prove that
$$
p_{a,L}\ge \frac{c}{a}
\quad\text{for all }1\le a<L/2,
$$
with a constant $c>0$ independent of $L$. On the other hand,
since $p_{a,L}\ge \P_a(\sigma_0<\sigma_a^+)$, it suffices to show that
$$
p_a:=\P_a(\sigma_0<\sigma_a^+)\ge \frac{c}{a}
\quad\text{for all }a\ge 1.
$$

By Lemma~\ref{lem:step_distr}, the law of the increment $Y_1-x$ converges
exponentially fast to the centered law $\nu$. Since $\nu$ is non-degenerate,
there exist an integer $r\ge 1$, a constant $\eta>0$, and $a_0<\infty$ such that
$$
\P_x(Y_1\le x-r)\ge \eta
\quad\text{for all }x\ge a_0.
$$
Fix $a\ge a_0+r$, and let $\widetilde{\sigma}_a:=\inf\{n\ge 0:Y_n\ge a\}$. For any $z\le a-r$,
consider the stopping time $\tau:=\sigma_0\wedge \widetilde{\sigma}_a$. Since $h$ is non-negative
and harmonic for the chain killed at $0$, the process $h(Y_{n\wedge\tau})$ is a
non-negative martingale. Hence
$$
h(z)=\E_z[h(Y_\tau)]
\ge
\E_z[h(Y_{\widetilde{\sigma}_a});\widetilde{\sigma}_a<\sigma_0].
$$
By Lemma~\ref{lem:harmonic}, $h(x)=x+O(1)$ as $x\to\infty$. Therefore there exists
$C_1<\infty$ such that $|h(x)-x|\le C_1$ for all $x\ge 1$, and hence
$
h(Y_{\widetilde{\sigma}_a})\ge a-C_1$
on $\{\widetilde{\sigma}_a<\sigma_0\}.
$
It follows that
$
h(z)\ge (a-C_1)\,\P_z(\widetilde{\sigma}_a<\sigma_0),
$
and thus
$$
\P_z(\sigma_0<\widetilde{\sigma}_a)\ge 1-\frac{h(z)}{a-C_1}.
$$
Since $z\le a-r$ and $h(z)\le z+C_1\le a-r+C_1$, we obtain
$$
\P_z(\sigma_0<\widetilde{\sigma}_a)\ge \frac{r-2C_1}{a-C_1}.
$$
Choose $r>2C_1$. Then there exists $c_1>0$ such that
$\inf_{z\le a-r}\P_z(\sigma_0<\widetilde{\sigma}_a)\ge c_1/{a}$ for $a\ge a_0+r$. Therefore
$$
p_a
\ge
\P_a(Y_1\le a-r)\,
\inf_{z\le a-r}\P_z(\sigma_0<\widetilde{\sigma}_a)
\ge
\frac{\eta c_1}{a},
\quad \text{for }a\ge a_0+r.
$$
For the finitely many values $1\le a<a_0+r$, irreducibility and accessibility of $0$ from $a$ before returning to $a$ imply that $p_a>0$. Hence
$
c_0:=\min_{1\le a<a_0+r} a\,p_a>0.
$
Therefore,
$$
p_a\ge \frac{c}{a}
\quad\text{for all }a\ge 1
\quad\text{with }
c:=\min\{c_0,\eta c_1\}>0.
$$
It follows that for $1\le a<L/2$, 
$$
G_L(a,a)\le \frac{a}{c},
$$
and hence $G_L(y,a)\le a/c$. Summing over $a\in A_j$, we have 
$$
\sup_{1\le y<L}
\E_y\!\left[\sum_{k=0}^{T_L-1}\mathbf 1_{\{Y_k\in A_j\}}\right]
\le
\frac{1}{c}\sum_{a=1}^{j-1}a
\le C j^2.
$$
This proves the lemma.
\end{proof}

\begin{lemma}\label{lem:survival-x-over-sqrtn}
There exists a constant $C<\infty$ such that for all $x\ge 1$ and all $n\ge 1$,
$$
\P_x(\sigma_0>n)\le C\,\frac{x}{\sqrt{n}}.
$$
\end{lemma}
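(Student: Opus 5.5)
We prove $\P_x(\sigma_0>n)\le Cx/\sqrt n$ by splitting according to whether the chain reaches a high level $L\asymp\sqrt n$ before hitting $0$. Using the harmonic function $h$ of Lemma~\ref{lem:harmonic}, we show that reaching height $L$ before $0$ costs probability $O(x/L)$. On the complementary event, the chain must spend time $n$ inside $\{1,\dots,L-1\}$, which has probability $O(x/\sqrt n)$ by a Green function estimate. We may assume $x\le\sqrt n$, since otherwise the bound is trivial with $C\ge1$.

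\textbf{Step 1 (exit at the top).} Set $L:=\lceil\sqrt n\,\rceil+1$ and $T_L=\sigma_0\wedge\widetilde\sigma_L$ as in Subsection~\ref{subsec:interval-killed}. Since $h$ is harmonic for the chain killed at $0$, the process $h(Y_{k\wedge T_L})$ is a nonnegative martingale. Moreover, $T_L<\infty$ a.s. by recurrence (Lemma~\ref{lem:recurrent}). Optional stopping together with Fatou's lemma gives
\[
h(x)\ge \E_x\bigl[h(Y_{T_L});\widetilde\sigma_L<\sigma_0\bigr]\ge (L-C_1)\,\P_x(\widetilde\sigma_L<\sigma_0).
\]
Here $|h(y)-y|\le C_1$ for all $y\ge1$, exactly as in the proof of Lemma~\ref{lem:box-green-j2}. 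Since also $h(x)\le x+C_1\le (1+C_1)x$, we get $\P_x(\widetilde\sigma_L<\sigma_0)\le Cx/\sqrt n$ for $n$ large. Small $n$ are absorbed into the constant.

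\textbf{Step 2 (staying in the box).} On $\{\sigma_0>n,\ \sigma_0<\widetilde\sigma_L\}$ we have $T_L>n$. Markov's inequality then gives
\[
\P_x(T_L>n)\le \frac1n\E_x[T_L]=\frac1n\sum_{a=1}^{L-1}G_L(x,a).
\]
The same splitting of $\{\sigma_0>n\}$ works if $L$ is replaced by $2L$, so Lemma~\ref{lem:box-green-j2} applies in the regime $a<L/2$. For each level $a$ we use $G_L(x,a)=\P_x(\sigma_a<T_L)\,G_L(a,a)$ and bound the two factors separately.

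For the second factor, the proof of Lemma~\ref{lem:box-green-j2} gives $G_L(a,a)=1/p_{a,L}$. We also have $p_{a,L}\ge \P_a(\sigma_0<\sigma_a^+)\ge c/a$. This lower bound is too weak for large $a$, so we improve it by also counting exits at the top: by a symmetric argument, $\P_a(\widetilde\sigma_L<\sigma_a^+)\ge c/(L-a)$. For the top exit we use the subharmonic barrier $\psi_+$ and the submartingale built from $I(x)=x$ in Steps~2--4 of Lemma~\ref{lem:harmonic}, where $P_+I(x)-x=O(e^{-cx})$ makes $Y$ essentially a martingale away from $0$. Together these give $G_L(a,a)\le C\,a(L-a)/L$.

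For the first factor, we need $\P_x(\sigma_a<T_L)\le C x/a$ for $a\ge x$. This follows from optional stopping for $h$, as in Step 1, applied to the stopping time $\sigma_0\wedge\widetilde\sigma_a$. Since the chain can overshoot level $a$ without visiting it, we pass from $\sigma_a$ to $\widetilde\sigma_a$: the inequality $\P_x(\sigma_a<T_L)\le\P_x(\widetilde\sigma_a<\sigma_0)$ covers it.

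Combining the two factors gives $G_L(x,a)\le C x(L-a)/L\le Cx$ for $a\ge x$, and $G_L(x,a)\le G_L(a,a)\le Ca$ for $a<x$. Summing over $a<L$ yields
\[
\E_x[T_L]\le C(xL+x^2)\le CxL.
\]
Hence $\P_x(T_L>n)\le CxL/n\le Cx/\sqrt n$.

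\textbf{Main obstacle.} The delicate point is the Green function bound $G_L(a,a)\lesssim a(L-a)/L$. It requires a lower bound on escaping upward from $a$ to level $L$ before returning to $a$, and Lemma~\ref{lem:box-green-j2} does not provide this, since it only controls the escape down to $0$. I would derive the upward bound from a one-step downward/upward move, as in Lemma~\ref{lem:box-green-j2} (the increment law is close to the nondegenerate symmetric law $\nu$). This is followed by the gambler's-ruin estimate obtained from the near-martingale $Y$ with bounded overshoot, using \eqref{eq:overshoot-bound}. If that route fails, the fallback is to compare with $S$ via Lemma~\ref{prop:benchmark-first-return}, decomposing at returns to $W_K$ and using Proposition~\ref{thm:ruin-order}-type renewal bounds.
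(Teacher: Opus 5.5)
Your proposal is correct and follows essentially the same route as the paper: split at a level $\asymp\sqrt n$, bound the top exit via the harmonic profile $|h_N(x)-x|\le C_0$, and bound $\E_x[T_N]\le CxN$ through $G_N(x,a)=\P_x(\sigma_a<T_N)\,G_N(a,a)$. Here $G_N(a,a)$ is controlled by the downward escape bound $p_{a,N}\ge c/a$ and the upward escape bound $p_{a,N}\ge c'/(N-a+1)$, which the paper proves exactly as you anticipate (a one-step jump of size $r$, then gambler's ruin with $h_N$); Markov's inequality finishes. The differences are cosmetic: you use the limit $h$ in place of $h_N$, merge the regimes $a\le N/2$ and $a>N/2$ into the single bound $G_L(a,a)\lesssim a(L-a)/L$, and choose $L$ without forcing $x\le L/2$.
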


\begin{proof}
Recall that for $x\ge 0$,
$$
h_N(x):=N\,\P_x(\widetilde \sigma_N<\sigma_0).$$
From the proof of Lemma~\ref{lem:harmonic}, there exists $C_0<\infty$ such that
\begin{equation}\label{eq:HN-profile-proof}
|h_N(x)-x|\le C_0
\quad\text{for all }N\ge 2,\ 1\le x<N.
\end{equation}

We first prove the mean exit-time bound
\begin{equation}\label{eq:mean-exit-xN}
\E_x[T_N]\le C_1\,xN
\quad\text{for all }N\ge 2,\ 1\le x\le N/2.
\end{equation}
Notice that
$$
\E_x[T_N]=\sum_{a=1}^{N-1}G_N(x,a) \quad\text{with}\quad G_N(x,a):=
\E_x\!\left[\sum_{k=0}^{T_N-1}\mathbf 1_{\{Y_k=a\}}\right]\quad\text{for } 1\le a<N.
$$
We claim that there exists $C_2<\infty$ such that
\begin{align}\label{eq:GN-upper-low}
G_N(x,a)&\le C_2\min\{x,a\}
\quad\text{for all }1\le a\le N/2,\ 1\le x<N,\quad\text{and}\\
\label{eq:GN-upper-high}
G_N(x,a)&\le C_2\,\frac{x}{N}\,(N-a+1)
\quad\text{for all }N/2<a<N,\ 1\le x\le N/2.
\end{align}

We start with the case $a\le N/2$.
By the strong Markov property at time $\sigma_a$,
$$
G_N(x,a)=\P_x(\sigma_a<T_N)\,G_N(a,a).
$$
Let
$
p_{a,N}:=\P_a(T_N<\sigma_a^+).
$
Then the number of visits to $a$ before $T_N$ is geometric with success parameter
$p_{a,N}$, so
$$
G_N(a,a)=\frac{1}{p_{a,N}}.
$$
As in the proof of Lemma~\ref{lem:box-green-j2}, we have
$
p_{a,N}\ge \P_a(\sigma_0<\sigma_a^+)\ge c/a
$
for all $a\ge 1$, with a constant $c>0$ independent of $N$. Hence
$$
G_N(a,a)\le \frac{a}{c}.
$$
If $x\ge a$, this gives $G_N(x,a)\le G_N(a,a)\le  C a=C\min\{x,a\}$.
If $x<a$, then on the event $\{\sigma_a<T_N\}$ the chain must reach level $a$
before hitting $0$, and therefore
$$
\P_x(\sigma_a<T_N)\le \P_x(\widetilde \sigma_a<\sigma_0)=\frac{h_a(x)}{a}.
$$
By \eqref{eq:HN-profile-proof},
$h_a(x)/{a}\le (x+C_0)/a\le C{x}/{a}$
for each $1\le x<a.
$
Thus
$$
G_N(x,a)\le \P_x(\sigma_a<T_N)\,G_N(a,a)\le C\,\frac{x}{a}\cdot a=Cx.
$$
This proves \eqref{eq:GN-upper-low}.

We now consider the case $N/2<a<N$ and $1\le x\le N/2$.
Again,
\begin{align}\label{GN.inq}
    G_N(x,a)\le \P_x(\sigma_a<T_N)\,G_N(a,a).
\end{align}
As before,
$$
\P_x(\sigma_a<T_N)\le \P_x(\widetilde\sigma_a<\sigma_0)=\frac{h_a(x)}{a}\le C\,\frac{x}{a}\le C\,\frac{x}{N}.
$$
It remains to bound $G_N(a,a)$. 

We show that
\begin{align}\label{p.aN_bound}
    p_{a,N}\ge \frac{c'}{N-a+1}
\quad\text{for all }N/2<a<N.\end{align}
By Lemma~\ref{lem:step_distr}, the increment law of $Y_1-Y_0$ converges
exponentially fast to the centered non-degenerate law $\nu$. Hence there exist an
integer $r\ge 1$, a constant $\eta>0$, and $z_0<\infty$ such that
$$
\P_z(Y_1\ge z+r)\ge \eta
\quad\text{for all }z\ge z_0.
$$
We choose $r>2C_0+1$. First, consider the finitely many values of $N$ with $N\le 2z_0+2r$.
Since the state space $\{1,\dots,N-1\}$ is finite and $0$ and $[N,\infty)$ are both
accessible from every interior state, there exists a constant $C_3<\infty$ such that
$$
G_N(a,a)\le C_3(N-a+1)
\quad\text{for all }N\le 2z_0+2r,\ N/2<a<N.
$$
Thus it remains to treat the case $N>2z_0+2r$. Then $a>N/2$ implies $a\ge z_0+r$.
If $a\in[N-r,N)$, then
$$
p_{a,N}\ge \P_a(Y_1\ge N)\ge \eta\ge \eta\,\frac{1}{N-a+1}.
$$
Now assume $N/2<a<N-r$. Starting from $a$, on the event $\{Y_1\ge a+r\}$ the chain
jumps to some state $z\ge a+r$. If $z\ge N$, then $T_N<\widetilde \sigma_a$ already.
If $a+r\le z<N$, let
$$
\tau:=\widetilde \sigma_N\wedge \inf\{k\ge 0:Y_k\le a\}.
$$
Since $0\le h_N\le N$, the process $h_N(Y_{m\wedge\tau})$ is a bounded martingale, and
therefore
$$
h_N(z)=\E_z[h_N(Y_\tau)].
$$
On the event $\{\widetilde\sigma_N<\inf\{k\ge 0:Y_k\le a\}\}$, we have $Y_\tau\ge N$ and hence
$h_N(Y_\tau)=N$. Otherwise $Y_\tau\le a$, and by \eqref{eq:HN-profile-proof},
$h_N(Y_\tau)\le a+C_0$. Thus
$$
h_N(z)\le (a+C_0)+(N-a-C_0)\,
\P_z\!\left(\widetilde\sigma_N<\inf\{k\ge 0:Y_k\le a\}\right).
$$
Since $z\ge a+r$ and \eqref{eq:HN-profile-proof} gives
$h_N(z)\ge z-C_0\ge a+r-C_0$, it follows that
$$
\P_z\!\left(\widetilde \sigma_N<\inf\{k\ge 0:Y_k\le a\}\right)
\ge \frac{r-2C_0}{N-a-C_0}.
$$
Because $a<N-r$ and $r>2C_0+1$, the denominator is positive, and therefore
$$
\P_z\!\left(\widetilde \sigma_N<\inf\{k\ge 0:Y_k\le a\}\right)\ge \frac{c_3}{N-a+1}
$$
for some $c_3>0$ independent of $N$ and $a$.
Combining with $\P_a(Y_1\ge a+r)\ge \eta$, we obtain \eqref{p.aN_bound}.
Thus
$$
G_N(a,a)=\frac{1}{p_{a,N}}\le C\,(N-a+1).
$$
Together with \eqref{GN.inq}, this implies \eqref{eq:GN-upper-high}.

Combining \eqref{eq:GN-upper-low} and \eqref{eq:GN-upper-high}, we obtain that for $1\le x\le N/2$,
\begin{align*}
\E_x[T_N]
&=
\sum_{a=1}^{N-1}G_N(x,a)=
\sum_{a=1}^{\lfloor N/2\rfloor}G_N(x,a)
+
\sum_{a=\lfloor N/2\rfloor+1}^{N-1}G_N(x,a)\\
&\le
C_2\sum_{a=1}^{\lfloor N/2\rfloor}\min\{x,a\}
+
C_2\frac{x}{N}\sum_{a=\lfloor N/2\rfloor+1}^{N-1}(N-a+1)\le Cx N.
\end{align*}
This proves \eqref{eq:mean-exit-xN}.

We now prove the claim of the lemma. Fix $x\ge 1$ and $n\ge 1$.
If $x>\sqrt{n+1}$, then the claimed bound is immediate after increasing the constant.
Thus we may assume $x\le \sqrt{n+1}$.
Set
$
N:=2\lceil\sqrt{n+1}\rceil.
$
Then $x\le N/2$. Since
$
\{\sigma_0>n\}\subseteq \{\widetilde \sigma_N<\sigma_0\}\cup \{T_N>n\},
$
we have
$$
\P_x(\sigma_0>n)\le \P_x(\widetilde \sigma_N<\sigma_0)+\P_x(T_N>n).
$$
By definition of $h_N$ and \eqref{eq:HN-profile-proof},
$$
\P_x(\widetilde \sigma_N<\sigma_0)=\frac{h_N(x)}{N}\le \frac{x+C_0}{N}\le C_4\,\frac{x}{N}.
$$
Also, by Markov's inequality and \eqref{eq:mean-exit-xN},
$$
\P_x(T_N>n)\le \frac{\E_x[T_N]}{n}\le C_1\,\frac{xN}{n}.
$$
Since $N:=2\lceil\sqrt{n+1}\rceil$, the last two bounds imply
$$
\P_x(\sigma_0>n)\le C\,\frac{x}{\sqrt{n+1}}.
$$
This proves the lemma.
\end{proof}

Recall that
$\sigma_0^+:=\inf\{m\ge 1:Y_m=0\}$.

\begin{lemma}\label{lem:interval-killed}
There exists a constant $C<\infty$ such that for all integers $M\ge1$ and
all $j\ge1$,
$$
\sum_{m=M}^{2M}
\P_0\!\left(Y_m\in A_j,\sigma_0^+>m\right)
\le
C\,j^2\,(M+1)^{-1/2}
.
$$
\end{lemma}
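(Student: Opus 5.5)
The idea is to split the time window at roughly $M/2$. Up to that time, we only need the killed chain to survive, which costs a factor $M^{-1/2}$ by Lemma~\ref{lem:survival-x-over-sqrtn}. After that time, we bound the expected number of visits to $A_j$ before absorption at $0$ by $Cj^2$, using Lemma~\ref{lem:box-green-j2} with $L\to\infty$. If $j=1$ then $A_1=\emptyset$ and there is nothing to prove, so assume $j\ge 2$.

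\emph{Step 1: a uniform Green bound up to absorption.} For $y\ge 1$ set
\[
G_j(y):=\E_y\Big[\sum_{k=0}^{\sigma_0-1}\mathbf 1_{\{Y_k\in A_j\}}\Big].
\]
Since $Y$ is recurrent (Lemma~\ref{lem:recurrent}), $\sigma_0<\infty$ a.s. On $\{\sigma_0<\infty\}$ the path up to $\sigma_0$ is bounded, so $T_L=\sigma_0$ for all large $L$. In particular $T_L\uparrow\sigma_0$ as $L\to\infty$. For each fixed $y$ and all $L\ge\max(2j,y+1)$, Lemma~\ref{lem:box-green-j2} bounds the truncated sum by $Cj^2$. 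Monotone convergence then gives $\sup_{y\ge1}G_j(y)\le Cj^2$. Since $Y_k\in A_j$ forces $Y_k\ne0$, visits to $A_j$ strictly before $\sigma_0$ are exactly the visits counted here.

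\emph{Step 2: Markov property at time $k:=\lfloor M/2\rfloor$.} For $M\ge 2$ we have $1\le k<M$. Every $m\in[M,2M]$ satisfies $m\ge k$, and on $\{\sigma_0^+>m\}$ the chain has not been killed by time $k$. Applying the Markov property at time $k$ (note $Y_k\ge1$ on $\{\sigma_0^+>k\}$), the left-hand side is at most
\[
\E_0\Big[\mathbf 1_{\{\sigma_0^+>k\}}\,G_j(Y_k)\Big]\le Cj^2\,\P_0(\sigma_0^+>k).
\]
For $M=1$ the sum has two terms, each at most $\P_0(Y_m\in A_j)\le 1\le j^2$, so the claim holds trivially after adjusting $C$.

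\emph{Step 3: survival bound.} Condition on the first step. We have $\P_0(Y_1=0)=Q(0,0)$, and $\P_0(\sigma_0^+>k)\le\sum_{x\ge1}Q(0,x)\,\P_x(\sigma_0>k-1)$. For $k\ge 2$, Lemma~\ref{lem:survival-x-over-sqrtn} gives
\[
\P_0(\sigma_0^+>k)\le C(k-1)^{-1/2}\,\E_0[Y_1].
\]
The expectation $\E_0[Y_1]$ is finite by the Gaussian tail \eqref{mu.tail}. For $k=1$ we use the trivial bound $1$. Since $k\asymp M$, combining with Step 2 yields $C j^2 (M+1)^{-1/2}$.

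The only delicate point is Step 1. Lemma~\ref{lem:box-green-j2} is stated for the chain stopped at $T_L$, with the restriction $j\le L/2$. We must therefore check that the bound is uniform in $L$ and that the passage $L\to\infty$ is legitimate. This rests on recurrence, which makes $\sigma_0$ finite a.s., and on monotone convergence. Everything else is a direct combination of earlier lemmas.
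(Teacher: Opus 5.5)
Your proof is correct and follows the paper's argument essentially step for step. It uses the uniform Green bound up to $\sigma_0$, obtained from Lemma~\ref{lem:box-green-j2} by letting $L\to\infty$ via recurrence and monotone convergence, then the Markov property at time $\lfloor M/2\rfloor$, and finally the survival estimate $\P_0(\sigma_0^+>k)\le C(k+1)^{-1/2}$ from a first-step decomposition combined with Lemma~\ref{lem:survival-x-over-sqrtn} and the finite first moment of $Y_1$; your treatment of the edge cases $j=1$, $M=1$ and $k=1$ also matches.
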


\begin{proof}
We first prove that there exists a positive constant $C_1<\infty$ such that
\begin{equation}\label{eq:half-line-green-Aj}
\sup_{y\ge1}
\E_y\!\left[
\sum_{k=0}^{\sigma_0-1}\mathbf 1_{\{Y_k\in A_j\}}
\right]
\le C_1j^2
\quad\text{for all } j\ge1.
\end{equation}
The case $j=1$ is trivial. Let $j\ge2$ and fix
$y\ge1$. Choose $L$ sufficiently large so that $L>y$ and $j\le L/2$. By
Lemma~\ref{lem:box-green-j2},
$$
\E_y\!\left[
\sum_{k=0}^{T_L-1}\mathbf 1_{\{Y_k\in A_j\}}
\right]
\le C_1j^2.
$$
Since the Markov chain $(Y_k)_{k\ge 0}$ is recurrent by Lemma~\ref{lem:recurrent}, we have $\sigma_0<\infty$ a.s. under
$\P_y$. Hence $T_L\uparrow\sigma_0$ a.s. as $L\to\infty$. Letting $L\to\infty$
and using monotone convergence, we get
$$
\E_y\!\left[
\sum_{k=0}^{\sigma_0-1}\mathbf 1_{\{Y_k\in A_j\}}
\right]
\le C_1 j^2.
$$
Taking the supremum over $y\ge1$, we obtain \eqref{eq:half-line-green-Aj}.

The result of the lemma is trivial when $M=1$. We assume that $M\ge2$ and set $h:=\lfloor M/2\rfloor$.
For $m\in\{M,\dots,2M\}$, by the Markov property at time $h$, 
\begin{align*}
\P_0(Y_m\in A_j,\sigma_0^+>m)
&=
\E_0\!\left[
\mathbf 1_{\{\sigma_0^+>h\}}
\P_{Y_h}(Y_{m-h}\in A_j,\sigma_0>m-h)
\right].
\end{align*}
On the event $\{\sigma_0^+>h\}$, we have $Y_h\ge1$. Therefore, summing over
$m=M,\dots,2M$ and using \eqref{eq:half-line-green-Aj}, we get
\begin{align}\label{est1}
\sum_{m=M}^{2M}
\P_0(Y_m\in A_j,\sigma_0^+>m)
&\le
\P_0(\sigma_0^+>h)
\sup_{y\ge1}
\sum_{s\ge0}
\P_y(Y_s\in A_j,\sigma_0>s)\le
C_1 j^2\,\P_0(\sigma_0^+>h).
\end{align}

It remains to bound $\P_0(\sigma_0^+>h)$. If $h\le1$, this probability is at
most $1$. If $h\ge2$, then by the Markov property at time $1$,
$$
\P_0(\sigma_0^+>h)
=
\sum_{x\ge1}\P_0(Y_1=x)\P_x(\sigma_0>h-1).
$$
By Lemma~\ref{lem:survival-x-over-sqrtn},
$$
\P_x(\sigma_0>h-1)\le C_2\frac{x}{\sqrt h}.
$$
Since $Y_1$ has finite first moment under $\P_0$ by Lemma~\ref{lem:step_distr},
we obtain
\begin{align}\label{est2}
   \P_0(\sigma_0^+>h)\le C_3 (h+1)^{-1/2}. 
\end{align}
Combining \eqref{est1} and \eqref{est2}, we get
$$
\sum_{m=M}^{2M}
\P_0(Y_m\in A_j,\sigma_0^+>m)
\le
Cj^2(M+1)^{-1/2}.
$$
This completes the proof.
\end{proof}

\subsection{Generalized ruin probability}\label{sec:gen.ruin}
This subsection, we extend the one-dimensional ruin probability estimate to the probability of hitting the endpoint before the $j$-th return to $0$.

\begin{proposition}
\label{prop:gen-ruin-upper}
For $n\ge 1$ and $j\ge 1$, let $\tau_0^{(j)}$ denote the time of the $j$-th
return to $0$ for the TSAW $\widetilde \X=(\widetilde{X}_k)_{k\ge 0}$ on $\{0,1,\dots,n\}$, namely
$$
\tau_0^{(1)}:=\tau_0^+,
\quad
\tau_0^{(j+1)}:=\inf\{k>\tau_0^{(j)}: \widetilde X_k=0\}\quad
\text{for } j\ge 1,
$$
and define the generalized ruin probability
$$
r_n^{(j)}:=\P(\tau_n<\tau_0^{(j)}).
$$
Then there exists a constant $C<\infty$ such that for all $n\ge 1$ and all
$j\ge 1$,
$$
r_n^{(j)}\le C\Bigl(1\wedge \frac{j}{\sqrt n}\Bigr).
$$
\end{proposition}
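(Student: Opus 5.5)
The plan is to translate the event $\{\tau_n<\tau_0^{(j)}\}$ into a statement about the backward chain of Lemma~\ref{prop:auxiliary-transition}. By that lemma, $B(1,n)$ has the law of $Y_{n-1}$ under $\P_0$. The walk jumps deterministically from $0$ to $1$, so before $\tau_n$ every return to $0$ is a backward crossing $1\to0$. Hence the number of returns to $0$ before $\tau_n$ is exactly $B(1,n)$, and
\[
r_n^{(j)}=\P\bigl(B(1,n)\le j-1\bigr)=\P_0\bigl(Y_{n-1}\in\{0,1,\dots,j-1\}\bigr)=\P_0(Y_{n-1}=0)+\P_0(Y_{n-1}\in A_j).
\]
The first term equals $r_n$, which is $O(n^{-1/2})$ by Proposition~\ref{thm:ruin-order}. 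For $j\ge\sqrt n$ the bound is trivial, so it remains to show $\P_0(Y_{n-1}\in A_j)\le Cj/\sqrt n$ for $j\le \sqrt n$.

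For the second term, decompose at the last visit to $0$ before time $m:=n-1$:
\[
\P_0(Y_m\in A_j)=\sum_{k=0}^{m}\P_0(Y_k=0)\,\P_0\bigl(Y_{m-k}\in A_j,\ \sigma_0^+>m-k\bigr).
\]
Write $u_k:=\P_0(Y_k=0)$ and $g_\ell:=\P_0(Y_\ell\in A_j,\sigma_0^+>\ell)$. Proposition~\ref{thm:ruin-order} gives $u_k\le C(k+1)^{-1/2}$.

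Lemma~\ref{lem:interval-killed} gives the dyadic block bound $\sum_{\ell=M}^{2M}g_\ell\le Cj^2(M+1)^{-1/2}$. This alone, inserted into the convolution, yields only $j^2$ times a logarithm, which is too weak. So I will need a sharper pointwise bound on $g_\ell$, combined with the trivial bound $g_\ell\le \P_0(\sigma_0^+>\ell)\le C(\ell+1)^{-1/2}$ from \eqref{est2}. The target is
\[
g_\ell\le C\min\Bigl\{\ell^{-1/2},\ \frac{j}{\ell}\Bigr\}\quad(\text{or }j^2\ell^{-3/2}).
\]
With this, split the convolution at $k\le m/2$ and $k>m/2$:
\begin{itemize}
\item For $k\le m/2$ we have $\ell=m-k\ge m/2$, so $g_\ell\lesssim j^2 m^{-3/2}$, and $\sum_{k\le m/2}u_k\lesssim\sqrt m$. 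This part contributes $j^2/m\le j/\sqrt m$ when $j\le\sqrt m$.
\item For $k>m/2$ we have $u_k\lesssim m^{-1/2}$, so this part is at most $m^{-1/2}\sum_{\ell\le m/2} g_\ell$. It therefore suffices to show $\sum_{\ell\le L}g_\ell\le Cj$ for $L\le m$. Using $g_\ell\le\min(\ell^{-1/2},j^2\ell^{-3/2})$ and cutting at $\ell=j^2$ gives $\sum_\ell g_\ell\lesssim j+j^2\cdot j^{-1}=O(j)$, as needed.
\end{itemize}

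The main obstacle is the pointwise bound $g_\ell\lesssim j^2\ell^{-3/2}$. It is a local-limit-type estimate for the killed chain landing in a window of width $j$. My approach is to split the path at time $\ell/2$ and use time reversal or a Markov bound on the second half:
\[
g_\ell\le \P_0(\sigma_0^+>\ell/2)\cdot\sup_{y\ge1}\P_y\bigl(Y_{\ell/2}\in A_j,\ \sigma_0>\ell/2\bigr).
\]
The first factor is $O(\ell^{-1/2})$ by \eqref{est2}. For the second factor I would aim for $O(j^2/\ell)$. First try a monotonicity argument: show $\ell\mapsto g_\ell$ is essentially non-increasing over dyadic blocks, comparing with the random walk $S$ through the Duhamel estimates of Lemma~\ref{eq:f-upper} (which already give $n^{-3/2}$ decay of excursion kernels, weighted by $H_K$). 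Then Lemma~\ref{lem:interval-killed} upgrades to $g_\ell\lesssim j^2\ell^{-3/2}$.

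If that fails, a fallback avoids pointwise bounds entirely: apply summation by parts using only the dyadic sums of Lemma~\ref{lem:interval-killed}, together with the monotonicity of $u_k$ noted in the proof of Proposition~\ref{thm:ruin-order}. Monotone $u_k$ lets the block sums control the convolution via $\sum_{\text{blocks}}u_{m-2M}\,j^2M^{-1/2}$. This handles $j=O(1)$ cleanly, and for general $j$ it gives $j^2/\sqrt n$. In that case I would accept that and check whether the application only needs $j$ bounded.
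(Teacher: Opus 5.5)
Your reduction $r_n^{(j)}=\P_0(Y_{n-1}\le j-1)=r_n+\P_0(Y_{n-1}\in A_j)$ is correct, and so is the last-zero decomposition $\P_0(Y_m\in A_j)=\sum_k u_k\,g_{m-k}$. This is exactly how the paper begins.

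\textbf{The gap.} Your main route then rests on the pointwise estimate $g_\ell\le Cj^2\ell^{-3/2}$, and you do not prove it.
\begin{itemize}
\item Nothing in the paper gives monotonicity of $\ell\mapsto g_\ell$.
\item The Duhamel bounds of Lemma~\ref{eq:f-upper} control excursions of $Y$ that avoid $W_K$ for one fixed large $K$. They do not directly control the chain killed at $0$ and landing in a window $A_j$ whose width grows up to $\sqrt n$. Bridging the two would need a further renewal argument over visits to $W_K\setminus\{0\}$, with pointwise $n^{-3/2}$ control, and that argument is not in your proposal.
\end{itemize}
Your fallback gives only $j^2/\sqrt n$, which is not the statement. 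Bounded $j$ is also not enough downstream. In Lemma~\ref{lem:qj-upper} the bound is used as $r^{(L_i)}_{m_i-1}\le C(1\wedge L_i/\sqrt{m_i})$, where $L_i$ is of the order of $N_e$ and hence unbounded. It is the linear dependence on $j$, paired with $\sum_j j\,\vartheta_e(j)\le C\sqrt{|e|}$ and $\sum_j j^2\vartheta_e(j)\le C|e|$, that produces the product form matching the marginals in Proposition~\ref{prop:per}.

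\textbf{The missing observation.} Your premise that the dyadic bound of Lemma~\ref{lem:interval-killed} is too weak is false. It suffices once you combine it with the trivial bound $g_\ell\le C(\ell+1)^{-1/2}$ from \eqref{est2} below the scale $\ell\approx j^2$.
\begin{itemize}
\item On each dyadic block, $\sum_{\ell=M}^{2M}g_\ell\le C\min\{\sqrt M,\;j^2M^{-1/2}\}$. Summing over $M=2^i$ gives $\sum_{\ell\ge1}g_\ell\le Cj$, which is all your second bullet needs.
\item For your first bullet ($k\le m/2$, so $\ell=m-k\ge m/2$), split at $k=j^2$.
 \begin{itemize}
 \item For $k<j^2$, use $g_{m-k}\le Cm^{-1/2}$ and $\sum_{k<j^2}u_k\le Cj$.
 \item For $k\ge j^2$, use $u_k\le C/j$ and one application of Lemma~\ref{lem:interval-killed} with $M=\lfloor m/2\rfloor$, which gives $\sum_{\lfloor m/2\rfloor\le\ell\le m-1}g_\ell\le Cj^2m^{-1/2}$.
 \end{itemize}
 Both pieces are at most $Cj/\sqrt m$.
\end{itemize}
So no local-limit estimate is needed. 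This is essentially the paper's proof: it splits the convolution into the ranges $m<j^2$, $j^2\le m\le n/2$ and $m>n/2$. It uses only Lemma~\ref{lem:interval-killed}, Proposition~\ref{thm:ruin-order} and the survival bound $\P_0(\sigma_0^+>m)\le C(m+1)^{-1/2}$.
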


\begin{proof}
When $j^2>n/2$, the result of the lemma is trivial. Thus, in the rest of the proof, we assume that
\begin{equation}\label{eq:j2-small}
1\le j^2\le n/2. 
\end{equation}
Recall that
$
B(1,n)
$
is the number of backward jumps of the TSAW $(\widetilde{X}_k)_{k\ge 0}$ from $1$ to $0$ before the first hit of $n$.
Since each jump from $1$ to $0$ creates exactly one further visit to $0$ after
time $0$, the event $\{\tau_n<\tau_0^{(j)}\}$ is equivalent to the event that
there are at most $j-1$ such backward jumps before $\tau_n$. 
On the other hand, by Lemma~\ref{prop:auxiliary-transition},
$
(B(n,n),B(n-1,n),\dots,B(1,n))
\stackrel d=
(Y_0,Y_1,\dots,Y_{n-1}),
$
and in particular,
$
B(1,n)\stackrel d=Y_{n-1}$ under
$\P_0$.
Therefore,
\begin{equation}\label{eq:gen-ruin-Y}
r_n^{(j)}=\P\bigl(B(1,n)\le j-1\bigr)=\P_0(Y_{n-1}\le j-1)=\sum_{a=0}^{j-1}\P_0(Y_{n-1}=a).
\end{equation}

By Proposition~\ref{thm:ruin-order}, there exists a constant $C_0<\infty$ such
that
\begin{equation}\label{eq:r_n-upper}
r_n=\P_0(Y_{n-1}=0)\le C_0 n^{-1/2}
\quad\text{for all }n\ge 1.
\end{equation}

We next prove that there exists a constant $C_1<\infty$ such that, for all $m\ge1$,
\begin{equation}\label{eq:sigma0-plus-upper-direct}
\P_0(\sigma_0^+>m)\le C_1(m+1)^{-1/2}.
\end{equation}
The case $m=1$ is trivial. For $m\ge2$, by the Markov property at time $1$,
$$
\P_0(\sigma_0^+>m)
=
\sum_{x\ge1}\P_0(Y_1=x)\P_x(\sigma_0>m-1).
$$
Note that, by Lemma~\ref{lem:survival-x-over-sqrtn}, we have
$
\P_x(\sigma_0>m-1)\le C{x}/{\sqrt m}
$ for some positive constant $C<\infty$. Since $Y_1$ has finite first moment under $\P_0$ by Lemma~\ref{lem:step_distr},
we thus obtain
$$
\P_0(\sigma_0^+>m)
\le
C m^{-1/2}\sum_{x\ge1}x\,\P_0(Y_1=x)
\le C_1 m^{-1/2}.
$$
This verifies \eqref{eq:sigma0-plus-upper-direct}.

By the strong Markov property, for $a\ge 1$,
\begin{align}
\P_0(Y_{n-1}=a)
&=
\sum_{\ell=0}^{n-2}\P_0(Y_\ell=0)\,
\P_0(Y_{n-\ell-1}=a,\sigma_0^+>n-\ell-1).
\label{eq:last-zero-decomp-correct}
\end{align}
Since $\P_0(Y_\ell=0)=r_{\ell+1}$, summing
\eqref{eq:last-zero-decomp-correct} over $1\le a\le j-1$, we have
\begin{align}
r_n^{(j)}-r_n
&=
\sum_{a=1}^{j-1}\P_0(Y_{n-1}=a) \notag\\
&=
\sum_{\ell=0}^{n-2}r_{\ell+1}
\P_0(Y_{n-\ell-1}\in A_j,\sigma_0^+>n-\ell-1) \notag\\
&=
\sum_{m=1}^{n-1} r_{n-m}\,
\P_0(Y_m\in A_j,\sigma_0^+>m).
\label{eq:sum-over-m}
\end{align}

For $m\ge1$, let
$$
a_m:=\P_0(Y_m\in A_j,\sigma_0^+>m).
$$
We split the sum in \eqref{eq:sum-over-m} into the ranges $m<j^2$ and
$m\ge j^2$.

\smallskip
\noindent
\textbf{Range 1: $m<j^2$.} 
By \eqref{eq:sigma0-plus-upper-direct},
$
a_m\le \P_0(\sigma_0^+>m)\le C_1(m+1)^{-1/2}.
$
Using also \eqref{eq:r_n-upper} and \eqref{eq:j2-small}, we have
\begin{align*}
I_1
&:=
\sum_{m=1}^{j^2-1} r_{n-m}a_m
\le
C_2\sum_{m=1}^{j^2-1}(n-m)^{-1/2}(m+1)^{-1/2} \le
C_3 n^{-1/2}\sum_{m=1}^{j^2-1}(m+1)^{-1/2}
\le
C_4\frac{j}{\sqrt n}.
\end{align*}

\smallskip
\noindent
\textbf{Range 2: $j^2\le m\le n-1$.}
By \eqref{eq:r_n-upper},
$$
I_2:=
\sum_{m=j^2}^{n-1} r_{n-m}a_m \le
C_0\sum_{m=j^2}^{n-1}\frac{a_m}{\sqrt{n-m}}.
$$
We split this sum further. First consider $j^2\le m\le n/2$. Since $(n-m)^{-1/2}\le C_5n^{-1/2}$ on this
range, by Lemma~\ref{lem:interval-killed}, we have
\begin{align}\label{I2.bound1}
\sum_{j^2\le m\le n/2}\frac{a_m}{\sqrt{n-m}}
&\le
C_5 n^{-1/2}
\sum_{j^2\le m\le n/2}a_m \le
C_6 n^{-1/2}
\sum_{\substack{k\ge0\, :\, 2^k j^2\le n/2}}
j^2(2^k j^2+1)^{-1/2}
\le
C_7\frac{j}{\sqrt n}.
\end{align}
It remains to consider $n/2<m\le n-1$. Note that 
$$
\sum_{j^2\le m\le n/2}\frac{a_m}{\sqrt{n-m}}=\sum_{1\le k<n/2}\frac{a_{n-k}}{\sqrt k}.
$$
We split the latter sum into $k<j^2$ and $k\ge j^2$. If $k<j^2$, then by
\eqref{eq:sigma0-plus-upper-direct} and \eqref{eq:j2-small}, we notice that
$$
a_{n-k}\le C_1(n-k+1)^{-1/2}\le C_8 n^{-1/2}.
$$
Therefore
$$
\sum_{1\le k<j^2}\frac{a_{n-k}}{\sqrt k}
\le
C_8 n^{-1/2}\sum_{1\le k<j^2}k^{-1/2}
\le
C_9\frac{j}{\sqrt n}.
$$
Finally, consider $j^2\le k<n/2$. Notice that
\begin{align*}
\sum_{j^2\le k<n/2}\frac{a_{n-k}}{\sqrt k}
&\le
\sum_{\substack{l\ge0\, :\, 2^l j^2<n/2}}
(2^l j^2)^{-1/2}
\sum_{2^l j^2\le k<2^{l+1}j^2}a_{n-k}.
\end{align*}
For each $l$ in the last sum, the indices $n-k$ lie in the interval
$[\lfloor n/2\rfloor,n-1]$. Hence, using Lemma~\ref{lem:interval-killed} with
$M=\lfloor n/2\rfloor$, we have
$$
\sum_{2^l j^2\le k<2^{l+1}j^2}a_{n-k}
\le
\sum_{m=\lfloor n/2\rfloor}^{n-1}a_m
\le
C_{10} j^2(n+1)^{-1/2}.
$$
Consequently,
\begin{align}\label{I2.bound2}
\sum_{j^2\le m\le n/2}\frac{a_m}{\sqrt{n-m}}=\sum_{j^2\le k<n/2}\frac{a_{n-k}}{\sqrt k}
&\le
C_{10} j^2(n+1)^{-1/2}
\sum_{\substack{l\ge0\, :\, 2^l j^2<n/2}}
(2^l j^2)^{-1/2}
\le
C_{11}\frac{j}{\sqrt n}.
\end{align}
Combining \eqref{I2.bound1} and \eqref{I2.bound2}, we get
$$
I_2\le C_{12}\frac{j}{\sqrt n}.
$$
Combining \eqref{eq:sum-over-m}, the bounds on $I_1$ and $I_2$, and
\eqref{eq:r_n-upper}, we obtain
$$
r_n^{(j)}
\le
r_n+I_1+I_2
\le
C_0 n^{-1/2}+C_{13}\frac{j}{\sqrt n}
\le
C_{14}\frac{j}{\sqrt n}.
$$
This completes the proof.
\end{proof}

\subsection{Markovian structure of forward steps}\label{sec:forward-chain}

For each $x\in\{1,\ldots,n\}$, define
$$
F(x,n):=\sum_{k=0}^{\tau_0^+-1}\mathbf{1}_{\{\widetilde X_k=x-1,\ \widetilde X_{k+1}=x\}},
$$
the number of forward crossings from $x-1$ to $x$ before the first return of
$\widetilde \X$ to $0$. Clearly,
$
F(1,n)=1$
 a.s.
and
$$
\{F(n,n)\ge 1\}=\{\tau_n<\tau_0^+\}.
$$
Hence
\begin{equation}\label{eq:forward-ruin}
\P(F(n,n)\ge 1)=r_n.
\end{equation}

Similarly as Lemma \ref{prop:auxiliary-transition} for the backward local-time sequence $(B(n-x,n))_{0\le x\le n-1}$, the next lemma show that the forward local-time sequence $(F(x,n))_{1\le x\le n}$ is also Markovian.
\begin{lemma}\label{lem:forward-chain}
For every $n\ge1$,
$$
(F(1,n),F(2,n),\dots,F(n,n))
\stackrel d=
(Z_1,Z_2,\dots,Z_n),
$$
where $(Z_k)_{k\ge1}$ is a Markov chain on $\mathbb Z_+$ with $Z_1=1$, which is absorbed at $0$, and its transition probabilities given by
$$
\P(Z_{k+1}=y\mid Z_k=z)=P^{\,z}(-1,y-z-1),
\quad z\ge1,\, y\ge0.
$$
In particular,
$$
\P(Z_n\ge 1)=r_n.
$$
\end{lemma}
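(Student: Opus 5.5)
We follow the Ray--Knight style argument of Lemma~\ref{prop:auxiliary-transition}, now reading the excursion from $0$ to its first return. The aim is to show that, for every $x\in\{1,\dots,n-1\}$ and $z\ge1$,
\[
\P\bigl(F(x+1,n)=y\,\big|\,F(x,n)=z,\ F(1,n),\dots,F(x-1,n)\bigr)=P^{\,z}(-1,\,y-z-1).
\]
Conditional on $F(x,n)=0$, the walk never reaches $x$, so $F(x+1,n)=0$; this gives absorption at $0$.

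Fix $x$ and condition on $F(x,n)=z\ge1$. Before $\tau_0^+$ the walk enters $x$ exactly $z$ times from $x-1$. It must also leave $x$ towards $x-1$ exactly $z$ times, since it ends at $0$. Note that for $x=n$ every exit is forced. We look at the successive visits to $x$ and call an exit $x\to x+1$ a ``success'' and an exit $x\to x-1$ a ``failure''. There are exactly $z$ failures. For $1\le j\le z$, let $\ell_j$ be the number of successes strictly between the $(j-1)$-st and $j$-th failure. Then $F(x+1,n)=\sum_{j=1}^z\ell_j$.

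Each excursion above $x$ returns to $x$ before $\tau_0^+$, because the path is finite; this is where we use that $\widetilde\X$ is reflected at $n$. Hence during the $j$-th stage, after $k$ successes in that stage, the edge $\{x,x+1\}$ has been crossed $2\bigl(\sum_{i<j}\ell_i+k\bigr)$ times. The edge $\{x-1,x\}$ has been crossed $2(j-1)+1$ times: $j$ forward entries and $j-1$ backward exits. The success probability is therefore
\[
\frac{e^{-2\beta(\sum_{i<j}\ell_i+k)}}{e^{-2\beta(\sum_{i<j}\ell_i+k)}+e^{-\beta(2j-1)}}=p\bigl(u_{j-1}+k-1\bigr),
\qquad u_j:=\sum_{i\le j}\ell_i-j .
\]
This follows from $2(u+k)-(2j-1)=2\bigl(\sum\ell_i+k-j\bigr)+1$ after reindexing. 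The shift by $-1$ relative to the backward case is what produces the starting point $-1$.

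With this, the conditional law of $\ell_j$ given the past is $q(v+\ell_j)\prod_{r=v}^{v+\ell_j-1}p(r)$, where $v=u_{j-1}-1$. Setting $\eta_j:=u_j-1$ with $\eta_0=-1$, this is exactly the one-step transition $P(\eta_{j-1},\eta_j)$ of the chain $\eta$, with $\eta_j-\eta_{j-1}=\ell_j-1$. Summing over compositions $\ell_1+\dots+\ell_z=y$ gives $P^{\,z}(-1,\,y-z-1)$.

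Conditioning on the earlier values $F(1,n),\dots,F(x-1,n)$ does not change this computation. The reason is that the local dynamics at $x$ depend only on the crossing counts of the two adjacent edges. Equivalently, in the Rubin construction the exponential clocks at $x$ are independent of what happens below $x$ once the number of entries is fixed. This gives the Markov property.

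The main obstacle is careful bookkeeping. We must verify the exact index shift, namely that the initial state is $-1$ rather than $0$ and that the step length is $z$ rather than $z+1$. We must also justify rigorously that the stage structure is independent of the trajectory below $x$; I would do this via the Rubin construction, as in Lemma~\ref{prop:auxiliary-transition}. Finally, $\P(Z_n\ge1)=r_n$ follows from \eqref{eq:forward-ruin} and the distributional identity.
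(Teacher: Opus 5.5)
Your proposal is correct and follows essentially the same stage decomposition as the paper: your $u_j$ is the paper's shifted by one, so $\eta_j=u_j-1$ with $\eta_0=-1$ is exactly the paper's sequence started from $u_0=-1$. Two points you add fill in what the paper leaves implicit: you justify the Markov property (conditioning on the whole past, not just $F(x,n)$) via independence of the Rubin clocks at distinct vertices, and you treat absorption at $z=0$ separately.
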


\begin{proof}
It is sufficient to show that for every $x\in\{1,\dots,n-1\}$ and every $y,z\ge0$,
\begin{equation}\label{eq:forward-kernel}
\P(F(x+1,n)=y\mid F(x,n)=z)=P^{\,z}(-1,y-z-1).
\end{equation}

Fix $x\in\{1,\dots,n-1\}$ and condition on the event $\{F(x,n)=z\}$.
Then the edge $\{x-1,x\}$ is crossed exactly $z$ times from left to right before
$\tau_0^+$. These $z$ forward crossings split the trajectory into $z$ successive
stages. During the $j$-th stage, the walk starts at $x$ immediately after the
$j$-th jump from $x-1$ to $x$, makes some number of jumps from $x$ to $x+1$, and
eventually leaves the stage by a jump from $x$ to $x-1$. For $1\le j\le z$, let $\ell_j(x,n)$ be the number of jumps from $x$ to $x+1$
during the $j$-th stage. Then
$$
F(x+1,n)=\sum_{j=1}^{z}\ell_j(x,n).
$$
Fix integers $s_1,\dots,s_z\ge 0$ such that
$
s_1+\cdots+s_z=y,
$
and define
$$
u_0:=-1,
\quad
u_j:=\sum_{i=1}^j s_i-j-1,
\quad 1\le j\le z.
$$
Thus $u_j=u_{j-1}+s_j-1$. At the beginning of the $j$-th stage, the edge $\{x-1,x\}$ has been crossed $2j-1$ times, while the edge $\{x,x+1\}$ has been crossed $2\sum_{i=1}^{j-1}s_i$ times. Thus the probability of exactly $s_j$ jumps from $x$ to $x+1$ during this stage and then one jump from $x$ to $x-1$ is $P(u_{j-1},u_j)$. Hence,
\begin{align*}
&\mathbb P\bigl(\ell_j(x,n)=s_j \,\big|\, \ell_1(x,n)=s_1,\dots,\ell_{j-1}(x,n)=s_{j-1},\,F(x,n)=z\bigr)
=
P(u_{j-1},u_j).
\end{align*}
Therefore, 
\begin{align*}
&\P\bigl(\ell_1(x,n)=s_1,\dots,\ell_z(x,n)=s_z\mid F(x,n)=z\bigr)=
P(u_0,u_1)\,P(u_1,u_2)\cdots P(u_{z-1},u_z).
\end{align*}
Similarly as in the proof of Lemma~\ref{prop:auxiliary-transition}, summing over all sequences $(s_1,\dots,s_z)$ such that
$s_1+\cdots+s_z=y$, we obtain
$$
\P(F(x+1,n)=y\mid F(x,n)=z)=P^{\,z}(-1,y-z-1),
$$
which is exactly \eqref{eq:forward-kernel}. The identity $\P(Z_n\ge1)=r_n$ follows from \eqref{eq:forward-ruin}.
This proves the lemma.
\end{proof}

We next record moment bounds for the forward local-time chain $(Z_n)_{n\ge 1}$.
\begin{lemma}\label{lem:forward-moments}
There exists a constant $C<\infty$ such that for all $n\ge1$,
$$
\E[Z_n]\le C
\quad\text{and}\quad
\E[Z_n^2]\le C\sqrt n.
$$
\end{lemma}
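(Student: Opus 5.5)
The plan is to compute the one-step conditional moments of the forward chain $Z$ from Lemma~\ref{lem:expconv}, then obtain the first bound from a supermartingale argument and the second from a telescoping sum controlled by the ruin probabilities. The second moment is easy; the first moment is the real issue.

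\textbf{Increment moments.} By Lemma~\ref{lem:forward-chain}, given $Z_k=z\ge1$ we have $Z_{k+1}-z-1\stackrel d=\eta_z$ with $\eta_0=-1$. So the increment $D:=Z_{k+1}-Z_k$ has the law of $\eta_z+1$ under $\eta_0=-1$. Lemma~\ref{lem:expconv} applies with starting point $-1$ (see the remark after it). Arguing as in Lemma~\ref{lem:step_distr}, this gives, uniformly in $z\ge1$:
\begin{itemize}
\item[(i)] $|\E[D\mid Z_k=z]|\le Ce^{-cz}$;
\item[(ii)] $|\E[D^2\mid Z_k=z]-\varsigma^2_\beta|\le Ce^{-cz}$;
\item[(iii)] uniformly exponential tails of $D$, and convergence in total variation of the law of $D$ to the centered law $\nu$.
\end{itemize}
At $z=0$ the chain is absorbed, so $D=0$ there.

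\textbf{Second moment.} For $z\ge1$,
\[
\E[Z_{k+1}^2-Z_k^2\mid Z_k=z]=2z\,\E[D\mid z]+\E[D^2\mid z]\le 2Cze^{-cz}+\varsigma_\beta^2+C\le C',
\]
and the increment vanishes at $z=0$. Hence $\E[Z_{k+1}^2]-\E[Z_k^2]\le C'\P(Z_k\ge1)=C'r_k$. Summing, $\E[Z_n^2]\le 1+C'\sum_{k=1}^{n-1}r_k$. Proposition~\ref{thm:ruin-order} gives $r_k\le Ck^{-1/2}$, so $\E[Z_n^2]\le C\sqrt n$.

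\textbf{First moment.} Summing (i) naively gives only $\E Z_n\le C\sqrt n$, because the error $Ce^{-cz}$ is paid at each step on $\{Z_k\ge1\}$. Instead I will build a superharmonic correction exactly as for $\psi_-$ in Step~2 of the proof of Lemma~\ref{lem:harmonic}.
\begin{itemize}
\item By (ii)--(iii) and a Taylor expansion, for small $\gamma>0$ and some $K_0$, $\E[e^{-\gamma D}\mid Z_k=z]\ge1+\eps$ for all $z\ge K_0$.
\item Set $g(z):=z-Ae^{-\gamma z}$ for $z\ge K_0$ and $g(0):=-A$. Choose $A$ large so that $A\eps e^{-\gamma z}\ge|\E[D\mid z]|$ for $z\ge K_0$. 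Then $\E[g(Z_{k+1})\mid Z_k=z]\le g(z)$ for $z\ge K_0$. This step needs care: the jump to $0$ contributes $g(0)=-A\le z-Ae^{-\gamma(z-(z))}$, i.e.\ it is handled by noting $e^{-\gamma\cdot 0}$ versus the exponential weight, the same way the indicator $\mathbf 1_{\{Y_1>0\}}$ was handled in Lemma~\ref{lem:harmonic}.
\item For $1\le z<K_0$, define $g(z):=\E_z[g(Z_{T})]$, where $T$ is the exit time from $\{1,\dots,K_0-1\}$. This is finite by the exponential tails of $D$.
\end{itemize}
Then $g(Z_k)+A\ge0$ is a supermartingale, so $\E[g(Z_n)]\le g(1)$. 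Since $g(z)\ge z-A'$ for all $z$ (with $A'$ absorbing $\max_{z<K_0}|g(z)-z|$), we get $\E Z_n\le g(1)+A'\le C$.

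The main obstacle is checking the supermartingale inequality near the absorbing state and in the boundary layer $\{1,\dots,K_0-1\}$. I expect this to go through verbatim from Lemma~\ref{lem:harmonic}, since absorption at $0$ only helps.
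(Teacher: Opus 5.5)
Your second-moment argument is the paper's: your telescoping bound $\E[Z_{k+1}^2]-\E[Z_k^2]\le C'r_k$ is the supermartingale $Z^2_{k\wedge\upsilon_0}-C_2(k\wedge\upsilon_0)$ used there. For the first moment, however, you missed that the bound follows in one line from what you have already proved, and this is exactly how the paper does it. Since $Z_n=Z_n\mathbf 1_{\{Z_n\ge1\}}$ and $\P(Z_n\ge1)=r_n\le Cn^{-1/2}$, Cauchy--Schwarz gives
\[
\E[Z_n]\le\bigl(\E[Z_n^2]\bigr)^{1/2}\,\P(Z_n\ge1)^{1/2}\le (C\sqrt n)^{1/2}\,(Cn^{-1/2})^{1/2}\le C .
\]

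The superharmonic construction you use instead has a real gap, at the step you flag yourself. For $z\ge K_0$, the drift identity $\E_z[g(Z_1)]-g(z)=\E_z[D]-Ae^{-\gamma z}\bigl(\E_z[e^{-\gamma D}]-1\bigr)$ needs $g(y)=y-Ae^{-\gamma y}$ at every landing point $y$.

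\begin{itemize}
\item At $y=0$ this holds, since $g(0)=-A=0-Ae^{0}$. So absorption is harmless and no indicator trick is needed.
\item On $\{1,\dots,K_0-1\}$ it fails, because there you replaced $g$ by its harmonic extension. The resulting correction $\sum_{1\le y<K_0}P(z,y)\bigl(g(y)-y+Ae^{-\gamma y}\bigr)$ contains the term $A\sum_{1\le y<K_0}P(z,y)\bigl(e^{-\gamma y}-\E_y[e^{-\gamma Z_T}]\bigr)$.
\item That term is linear in $A$ and of uncontrolled sign. Nothing in your argument makes it dominated by the gain $A\eps e^{-\gamma z}$; for $z$ near $K_0$, $\P_z(Z_1<K_0)$ is of order one. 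Since the term scales with $A$, enlarging $A$ does not help.
\item Appealing to Lemma~\ref{lem:harmonic} ``verbatim'' does not settle this. Step~2 there uses the same identity for $\psi_\pm$, with the same harmonically extended values on the layer.
\end{itemize}

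A version that does work: keep $g(z)=z-Ae^{-\gamma z}$ on all of $\Z_+$, and accept a drift bounded by some $C_*$ on the finite layer. From each $z$ in the layer the chain is absorbed in one step with probability at least $\delta:=\min_{1\le z<K_0}P^{z}(-1,-z-1)>0$. Hence the expected number of visits to the layer is at most $1/\delta$, so $\E[g(Z_n)]\le g(1)+C_*/\delta$ and $\E[Z_n]\le g(1)+C_*/\delta+A$. Given your second-moment bound, though, the Cauchy--Schwarz line above makes this whole detour unnecessary.
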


\begin{proof}
Let
$
\upsilon_0:=\inf\{k\ge 1:Z_k=0\}
$
be the absorbing time of $Z$. 
Then for $n\ge 1$, we have
$\{\upsilon_0>n\}=\{Z_n\ge 1\}$. 
By Lemma~\ref{lem:forward-chain},
$
\P(\upsilon_0>n)=\P(Z_n\ge 1)=r_n.
$
Hence, by Proposition~\ref{thm:ruin-order},
\begin{equation}\label{eq:forward-survival}
\P(\upsilon_0>n)\le C_0\,n^{-1/2}
\quad\text{for } n\ge 1.
\end{equation}

We first prove the second-moment bound. For $z\ge 1$, using the transition probabilities
of $Z$, we have
\begin{align*}
\E[(Z_{k+1}-Z_k)^q\mid Z_k=z]
&=
\sum_{j\in\mathbb Z} j^q\,P^{\,z}(-1,j-1) \quad\text{for } q\in\{1,2\}.
\end{align*}
By the same argument as in Lemma~\ref{lem:step_distr}, there exist constants $C_1<\infty$ and $c_1>0$ such that
\begin{equation}\label{eq:forward-step-estimates}
\left|\E[Z_{k+1}-Z_k\mid Z_k=z]\right|\le C_1e^{-c_1 z},
\quad
\left|\E[(Z_{k+1}-Z_k)^2\mid Z_k=z]-\varsigma_\beta^2\right|\le C_1e^{-c_1 z}
\quad \text{for }z\ge 1.
\end{equation}
Therefore,
\begin{align*}
\E[Z_{k+1}^2-Z_k^2\mid Z_k=z]
&=
2z\,\E[Z_{k+1}-Z_k\mid Z_k=z]
+\E[(Z_{k+1}-Z_k)^2\mid Z_k=z]
\end{align*}
is uniformly bounded above in $z\ge 1$. Hence there exists $C_2<\infty$ such that
$$
\E[Z_{k+1}^2\mid Z_k=z]\le z^2+C_2
\quad\text{for all }z\ge 1.
$$
Since $0$ is absorbing, the same inequality also holds for $z=0$. Therefore
$Z_{k\wedge\upsilon_0}^2-C_2(k\wedge\upsilon_0)$ with $k\ge 1, $
is a supermartingale. Thus, for every $n\ge 1$,
$
\E[Z_{n\wedge\upsilon_0}^2-C_2(n\wedge\upsilon_0)]
\le
\E[Z_1^2-C_2].
$
Since $Z_{n\wedge\upsilon_0}=Z_n$ and $Z_1=1$, this yields
$$
\E[Z_n^2]\le 1-C_2+C_2\,\E[n\wedge\upsilon_0].
$$
Using \eqref{eq:forward-survival}, we obtain
$$
\E[n\wedge\upsilon_0]
=
\sum_{k=0}^{n-1}\P(\upsilon_0>k)
\le
1+\sum_{k=1}^{n-1}C_0\,k^{-1/2}
\le
C_3\sqrt n.
$$
Therefore,
$
\E[Z_n^2]\le C_4\sqrt n.
$
Finally, using the Cauchy--Schwarz inequqality and \eqref{eq:forward-survival}, we obtain
\begin{align*}
\E[Z_n]
&=
\E[Z_n;\upsilon_0>n]\le
\bigl(\E[Z_n^2]\bigr)^{1/2}\,\P(\upsilon_0>n)^{1/2}\le
(C_4\sqrt n)^{1/2}(C_0n^{-1/2})^{1/2}
\le C_5.
\end{align*}
This completes the proof.
\end{proof}

\section{Proof of Theorem \ref{thm:main}}
\label{sec:proof-main}

\subsection{Quasi-independent percolation}\label{sec:percolation}
Let $\Tcal=(V,E)$ be an infinite locally finite tree. For each edge $e\in E$, we assign a Bernoulli random variable $\xi_e$ with parameter $p_e\in [0,1]$. The Bernoulli field $(\xi_e)_{e\in E}$ is not necessarily independent nor identically distributed. We say that $e$ is \textbf{open} if $\xi_e=1$, and \textbf{closed} otherwise. Assume that $(\xi_e)_{e\in E}$ is governed by a probability measure $\mathbf{Q}$. We call $\mathbf Q$ a \textbf{bond percolation} on $\Tcal$. 
After removing all closed edges from $E$, we obtain connected components consisting of open edges, which we call \textbf{clusters}. 

If two vertices $x$ and $y$ belong to the same cluster, we write $x\leftrightarrow y$. If the cluster containing $x$ has infinitely many vertices, we write $x\leftrightarrow \infty$. For two vertices $x,y$, we denote by $x\wedge y$ their nearest common ancestor.
\begin{definition}\label{def:quasi}
    A bond percolation $\mathbf{Q}$ is \textbf{quasi-independent} if there exists a  constant $M\in (0,\infty)$ such that \begin{align}
\label{def:weak-QI}
   & \mathbf{Q}(\rho\leftrightarrow x, \rho\leftrightarrow y \mid \rho\leftrightarrow x\wedge y )\le M\cdot \mathbf{Q}(\rho\leftrightarrow x \mid \rho\leftrightarrow x\wedge y )\mathbf{Q}(\rho\leftrightarrow y \mid \rho\leftrightarrow x\wedge y )
\end{align}
for each $x, y\in V$. 
\end{definition}

For each edge $e\in E$, we denote its two endpoints by $e^-$ and $e^+$ where $|e^+|=|e^-|+1$. Let 
\begin{align}\label{adt.c} \text{ $c(e) := 1$ for $|e| = 1$}\quad \text{and}\quad
    c(e) :=\frac{\mathbf{Q}(\rho\leftrightarrow e^+)}{\mathbf{Q}(e \text{ is closed} \mid \rho\leftrightarrow e^-)}.
\end{align}
for $|e|>1$. We call $(c(e))_{e\in E}$ the \textbf{adapted conductances} of the percolation $\mathbf{Q}$. We will use the following result.
\begin{proposition}[Theorem 5.19 in \cite{LP2016}]\label{lyons1989}
    Let $\mathbf{Q}$ be a quasi-independent percolation process taking place on an infinite locally finite tree $\Tcal = (V,E)$.
    \begin{itemize}
        \item [(i)] If \(\inf_{\pi \in \Pi} \sum_{e \in \pi} \mathbf{Q}(\rho \leftrightarrow e^+) = 0 \text{ then } \mathbf{Q}(\rho \leftrightarrow \infty) = 0\);
        \item [(ii)] If there exists a non-zero flow $\theta$ such that $\sum_{e\in E} \frac{\theta(e)^2}{c(e)}<\infty$ then  $\mathbf{Q}(\rho \leftrightarrow \infty) > 0.$
    \end{itemize}
\end{proposition}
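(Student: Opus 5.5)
Part (i) does not need quasi-independence; part (ii) is the second moment method with Lyons' telescoping identity for the adapted conductances.

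\emph{Part (i).} Suppose the open cluster of $\rho$ is infinite. Since $\Tcal$ is locally finite, K\"onig's lemma gives an infinite self-avoiding open path from $\rho$. This path crosses every cutset $\pi\in\Pi$. Hence for every $\pi$ there is some $e\in\pi$ with $\rho\leftrightarrow e^+$, and a union bound gives
\[
\mathbf Q(\rho\leftrightarrow\infty)\le \sum_{e\in\pi}\mathbf Q(\rho\leftrightarrow e^+).
\]
Taking the infimum over $\pi\in\Pi$ proves (i).

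\emph{Part (ii).} Normalize $\theta$ to unit strength, so $\sum_{|e|=1}\theta(e)=1$, and write $\theta(v)=\theta(e)$ when $e^+=v$. We may take $\theta\ge 0$, since replacing $\theta$ by the positive part is standard on a tree. For each level $n$ let $\pi_n=\{e:|e|=n\}$ and set
\[
X_n:=\sum_{e\in\pi_n}\theta(e)\,\frac{\mathbf 1_{\{\rho\leftrightarrow e^+\}}}{\mathbf Q(\rho\leftrightarrow e^+)} .
\]
Then $\mathbf E X_n=1$ by the flow property. By Cauchy--Schwarz, $\mathbf Q(X_n>0)\ge 1/\mathbf E[X_n^2]$. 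Since $\{X_n>0\}=\{\rho \text{ connected to level } n\}$ decreases to $\{\rho\leftrightarrow\infty\}$ (again by local finiteness), it suffices to bound $\mathbf E[X_n^2]$ uniformly in $n$.

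\emph{Second moment via quasi-independence.} Expand $\mathbf E[X_n^2]$ over pairs $x,y$ at level $n$ and group the pairs by their meet $z=x\wedge y$. Conditioning on $\rho\leftrightarrow z$, the inequality \eqref{def:weak-QI} gives
\[
\mathbf Q(\rho\leftrightarrow x,\rho\leftrightarrow y)\le M\,\frac{\mathbf Q(\rho\leftrightarrow x)\,\mathbf Q(\rho\leftrightarrow y)}{\mathbf Q(\rho\leftrightarrow z)}.
\]
This uses $\{\rho\leftrightarrow x\}\subset\{\rho\leftrightarrow z\}$; the diagonal $x=y$ gives the same form with $z=x$. Therefore
\[
\mathbf E[X_n^2]\le M\sum_z \frac{1}{\mathbf Q(\rho\leftrightarrow z)}\sum_{x\wedge y=z}\theta(x)\theta(y).
\]

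\emph{Telescoping identity.} The key step is the identity, for $|e|>1$,
\[
\frac1{\mathbf Q(\rho\leftrightarrow e^+)}-\frac1{\mathbf Q(\rho\leftrightarrow e^-)}
=\frac{\mathbf Q(\rho\leftrightarrow e^-)-\mathbf Q(\rho\leftrightarrow e^+)}{\mathbf Q(\rho\leftrightarrow e^+)\mathbf Q(\rho\leftrightarrow e^-)}
=\frac{\mathbf Q(e\text{ closed}\mid\rho\leftrightarrow e^-)}{\mathbf Q(\rho\leftrightarrow e^+)}=\frac1{c(e)}.
\]
Summing along $\mathcal P_z$ gives $1/\mathbf Q(\rho\leftrightarrow z)\le C\bigl(1+\sum_{e\le z}1/c(e)\bigr)$. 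The first-level edges contribute a bounded factor, handled by $c(e)=1$ and finitely many first-level edges.

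\emph{Exchanging sums.} Insert this bound and swap the order of summation. For a fixed edge $e$, the pairs $(x,y)$ whose meet lies at or below $e^+$ have total weight $\sum\theta(x)\theta(y)\le\theta(e)^2$. Hence
\[
\mathbf E[X_n^2]\le CM\Bigl(1+\sum_{e}\frac{\theta(e)^2}{c(e)}\Bigr)<\infty
\]
uniformly in $n$. It follows that $\mathbf Q(\rho\leftrightarrow\infty)>0$.

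The main obstacle is this bookkeeping: the telescoping identity for $1/\mathbf Q(\rho\leftrightarrow\cdot)$ and the exchange of sums that converts the pair sum into the energy. The care needed is with the diagonal terms and the root-level convention $c(e)=1$; everything else is routine.
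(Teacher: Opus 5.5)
Your proof is correct and follows essentially the same route as the source: the paper does not prove this proposition but quotes Lyons--Peres (Theorem 5.19), whose proof is your union bound over a cutset for (i) and, for (ii), the second moment of $X_n=\sum_{|x|=n}\theta(x)\mathbf 1_{\{\rho\leftrightarrow x\}}/\mathbf Q(\rho\leftrightarrow x)$ combined with quasi-independence and the telescoping identity $1/\mathbf Q(\rho\leftrightarrow e^+)-1/\mathbf Q(\rho\leftrightarrow e^-)=1/c(e)$. Two small points to tidy: flows on trees are nonnegative by convention here, and the positive part of a signed flow is not a flow, so drop that remark; also $\{X_n>0\}$ is only contained in the event that $\rho$ is joined to level $n$ rather than equal to it, but that inclusion is the direction you need.
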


\subsection{Ruin percolation}\label{sec:ruin-per}
Let $\tau_v=\inf\{n\ge0: X_n=v\}$ and $\tau_v^+=\inf\{n>\tau_v: X_n=v\}$ be respectively the first hitting time of vertex $v$ and the first return time to vertex $v$. 
Define
$$
\mathcal{C} ( \rho)=\left\{\{v^{-1},v\} \in E \colon \tau_v  < \tau_{\rho}^+ \right\}. 
$$ 

Let $\tau_u(v)$ and $\tau_{u}^{+}(v)$ be respectively the first hitting times and the return time to vertex $u$ associated with $\X^{(v)}$.
Let 
$$\mathcal{C}_{\rm CP} ( \rho)=\left\{\{v^{-1},v\} \in E \colon \tau_v(v)  < \tau_{\rho}^+(v) \right\}. $$
We say an edge $e \in E$ is \textbf{open} if $e \in{\mathcal{C}}_{\mathrm{CP}} ( \rho)$, and \textbf{closed} otherwise. We define a correlated percolation by removing all closed edges, and we refer to this model as the \textbf{ruin percolation}.

 We will use the following result:
\begin{lemma}[Lemma 3.3 in \cite{LN2026}, Lemma 7.1 in \cite{CKS2020}] \label{per2.lem} We have
\begin{align*}
    {\P}(\tau^+_{\rho} = \infty) = {\P} (|\mathcal{C}(\rho)| = \infty) = {\P}(|\mathcal{C}_{\rm CP}(\rho)| = \infty).
\end{align*} 
Consequently, the process $\X$ \begin{itemize}
    \item is a.s. recurrent if $ \P(|\mathcal{C}_{\rm CP}(\rho)| < \infty)=1$, 
\item is a.s. transient if $\P(|\mathcal{C}_{\rm CP}(\rho)| = \infty)>0$.
\end{itemize}
\end{lemma}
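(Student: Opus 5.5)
The statement to prove is Lemma~\ref{per2.lem}: the three events $\{\tau_\rho^+=\infty\}$, $\{|\mathcal C(\rho)|=\infty\}$ and $\{|\mathcal C_{\rm CP}(\rho)|=\infty\}$ have equal probability. The plan has three steps. First, show that $\{\tau^+_\rho=\infty\}$ and $\{|\mathcal C(\rho)|=\infty\}$ coincide almost surely. Second, couple $\mathcal C(\rho)$ with $\mathcal C_{\rm CP}(\rho)$ through the restriction principle, Lemma~\ref{lem:rest}. Third, read off the recurrence/transience consequences from a zero--one dichotomy.

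\textbf{Step 1: $\{\tau^+_\rho=\infty\}=\{|\mathcal C(\rho)|=\infty\}$ a.s.} If $\tau_\rho^+<\infty$, then before $\tau_\rho^+$ the walk has made finitely many steps. So only finitely many $v$ satisfy $\tau_v<\tau_\rho^+$, and $\mathcal C(\rho)$ is finite.

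Conversely, suppose $\tau^+_\rho=\infty$. The walk never returns to $\rho$, so it makes infinitely many steps away from $\rho$. Infinitely many distinct vertices must then be visited, for otherwise $\X$ would stay forever in a finite set. In that case some edge would be crossed infinitely often, and its weight $e^{-\beta n}\to 0$. The standard argument is that an edge adjacent to a visited vertex which is crossed only finitely often keeps a weight bounded below. The walk then escapes through such edges by a conditional Borel--Cantelli argument, unless the whole finite set is $\rho$-free, which is impossible because every finite connected set containing the start point contains $\rho$.

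Hence infinitely many $v$ are hit before $\tau^+_\rho=\infty$, and $|\mathcal C(\rho)|=\infty$.

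\textbf{Step 2: coupling $\mathcal C(\rho)$ and $\mathcal C_{\rm CP}(\rho)$.} Fix $v$. The event $\{\tau_v<\tau_\rho^+\}$ depends only on the order in which $\X$ visits the vertices of $\mathcal P_v$, that is, only on the restriction of $\X$ to $\mathcal P_v$. By Lemma~\ref{lem:rest}, $\X^{(v)}$ coincides with that restriction up to the killing time $K_{\mathcal P_v}$. Therefore, on the event $\{K_{\mathcal P_v}\ge \tau_v(v)\wedge\tau_\rho^+(v)\}$, the edge $\{v^{-1},v\}$ lies in $\mathcal C(\rho)$ if and only if it lies in $\mathcal C_{\rm CP}(\rho)$. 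In particular $\mathcal C(\rho)\subseteq\mathcal C_{\rm CP}(\rho)$ always holds.

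For the reverse inclusion, observe that if $e=\{v^{-1},v\}\in\mathcal C_{\rm CP}(\rho)\setminus\mathcal C(\rho)$, then the restriction was killed before deciding the event. This means $\X$ left $\mathcal P_v$ forever at some vertex $u<v$ without returning to $\rho$.

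\textbf{Step 3: concluding the equalities and the dichotomy.} On $\{\tau_\rho^+=\infty\}$ we have both $|\mathcal C|=\infty$ and $|\mathcal C_{\rm CP}|\ge|\mathcal C|$. On $\{\tau_\rho^+<\infty\}$ we need $|\mathcal C_{\rm CP}(\rho)|<\infty$ a.s. as well, and this is the main obstacle: extension processes on paths through vertices that $\X$ never reaches may still ruin.

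The first approach I would try is the one from \cite{CKS2020,LN2026}. On $\{\tau_\rho^+<\infty\}$, every path $\mathcal P_v$ is visited at $\rho$ at time $\tau^+_\rho$. For $v$ outside the finite set visited before $\tau_\rho^+$, the restriction to $\mathcal P_v$ reaches $\rho$ before reaching $v$, and this is decided before killing. So $e\notin\mathcal C_{\rm CP}$ for all but finitely many $e$. This gives $\P(\tau_\rho^+<\infty,|\mathcal C_{\rm CP}|=\infty)=0$, and all three probabilities are equal.

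Finally, for the consequences: if $\P(|\mathcal C_{\rm CP}(\rho)|<\infty)=1$, then $\tau_\rho^+<\infty$ a.s. Applying the same argument at each return, via the strong Markov property of the pair (position, local times) together with a zero--one argument on the tree, gives infinitely many returns to $\rho$. Local finiteness then propagates recurrence to every vertex. If instead $\P(|\mathcal C_{\rm CP}(\rho)|=\infty)>0$, then the walk escapes with positive probability. A standard zero--one argument, as cited in those references, upgrades this to a.s. transience.
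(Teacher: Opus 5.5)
\textbf{The gap is at the root, in Steps 2 and 3.} The restriction of $\X$ to $\mathcal P_v$ records only changes of position inside $\mathcal P_v$. So an excursion of $\X$ from $\rho$ into a child $u\notin\mathcal P_v$ and back is invisible to it, and $\tau_\rho^+$ for $\X$ need not correspond to $\tau_\rho^+(v)$ for $\X^{(v)}$. Hence $\{\tau_v<\tau_\rho^+\}$ is not a function of the restriction.

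Only the inclusion $\mathcal C(\rho)\subseteq\mathcal C_{\rm CP}(\rho)$ survives. Edges in $\mathcal C_{\rm CP}(\rho)\setminus\mathcal C(\rho)$ need not come from killing: if $\X$ goes $\rho\to u\to\rho$ and then straight down $\mathcal P_v$ to $v$, then $\{v^{-1},v\}\in\mathcal C_{\rm CP}(\rho)\setminus\mathcal C(\rho)$ and nothing has been killed. For the same reason, the key claim of Step 3 fails for every $v$ outside the subtree of $X_1$: at time $\tau_\rho^+$ the restriction to $\mathcal P_v$ has not moved, so nothing is decided.

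This cannot be patched, because the conclusion is false when $\deg(\rho)\ge2$. For a child $u$ of $\rho$, let $E_u$ be the edges of the subtree rooted at $u$ together with $\{\rho,u\}$. Let $\rho$ have a leaf child $a$ and a child $b$ carrying a binary tree. Then $\{X_1=a\}$ has probability $1/2$ and forces $\tau_\rho^+=2$. The event $\{|\mathcal C_{\rm CP}(\rho)\cap E_b|=\infty\}$ depends only on clocks $\xi(x,\cdot,\cdot)$ with $x$ in the subtree of $b$, since the extensions step deterministically at $\rho$. It is therefore independent of $X_1$, and it has probability $p>0$ by Case 2 of the paper's argument applied to that subtree. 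Hence $\P(\tau_\rho^+<\infty,\ |\mathcal C_{\rm CP}(\rho)|=\infty)\ge p/2>0$, and the lemma's second equality, taken literally, needs $\rho$ to have a single child.

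What you can prove, and what the criterion actually uses, is the version conditioned on the first step. If $X_1=u$, then for $v$ below $u$ the restriction to $\mathcal P_v$ does register the return at time $\tau_\rho^+$. So your Step 3 reasoning is valid there and gives $\{X_1=u,\ \tau_\rho^+<\infty\}\subseteq\{|\mathcal C_{\rm CP}(\rho)\cap E_u|<\infty\}$. Together with Step 1, the inclusion $\mathcal C\subseteq\mathcal C_{\rm CP}$, and the independence of $\{|\mathcal C_{\rm CP}(\rho)\cap E_u|=\infty\}$ from $X_1$, this yields
\[
\P(\tau_\rho^+=\infty)=\frac{1}{\deg(\rho)}\sum_{u\sim\rho}\P\bigl(|\mathcal C_{\rm CP}(\rho)\cap E_u|=\infty\bigr).
\]
This is positive if and only if $\P(|\mathcal C_{\rm CP}(\rho)|=\infty)>0$. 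In the example above it equals $p/2<p=\P(|\mathcal C_{\rm CP}(\rho)|=\infty)$.

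Two smaller points.
\begin{itemize}
\item In Step 1, the right reason is that a finitely crossed edge at $x$ keeps weight at least $e^{-\beta m}$, while the total weight at $x$ is at most $\deg(x)$. By conditional Borel--Cantelli, the set of vertices visited infinitely often is then closed under adjacency, hence empty or all of $V$. Your remark about finite connected sets containing $\rho$ proves nothing.
\item Your last paragraph does not establish the two consequences. The hypothesis on $\mathcal C_{\rm CP}(\rho)$ concerns only first excursions in the fresh environment. So ``the same argument at each return'' would require controlling clusters defined by hitting $v$ before later returns of $\X^{(v)}$, which are generalized ruin events. Likewise, upgrading $\P(\tau_\rho^+=\infty)>0$ to a.s.\ transience needs a zero--one law that you only name.
\end{itemize}
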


In this section we aim to prove that the ruin percolation is quasi-independent.
 
 \begin{proposition}
\label{prop:per}
There exists a constant $M<\infty$ such that for every pair of edges
$e_1,e_2\in E$ with $e=e_1\wedge e_2$ being the last common edge of $\mathcal{P}_{e_1}$ and $\mathcal{P}_{e_2}$, 
\begin{equation}\label{eq:edge-qi-final}
\P\bigl(e_1,e_2\in\mathcal C_{\rm CP}(\rho)\mid e\in\mathcal C_{\rm CP}(\rho)\bigr)
\le
M\,
\P\bigl(e_1\in\mathcal C_{\rm CP}(\rho)\mid e\in\mathcal C_{\rm CP}(\rho)\bigr)
\P\bigl(e_2\in\mathcal C_{\rm CP}(\rho)\mid e\in\mathcal C_{\rm CP}(\rho)\bigr).
\end{equation}
Equivalently, the ruin percolation is quasi-independent.
\end{proposition}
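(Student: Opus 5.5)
We reduce everything to one-dimensional chains along geodesics via the extension processes and the forward local-time chain of Lemma~\ref{lem:forward-chain}. Let $e=\{w^{-1},w\}$ with $|e|=k$, $e_1=\{v_1^{-1},v_1\}$, $e_2=\{v_2^{-1},v_2\}$, with $|v_i|=k+n_i$. The event $e\in\mathcal C_{\rm CP}(\rho)$ depends only on the clocks $\xi$ attached to oriented edges of $\mathcal P_w$. The event $e_i\in\mathcal C_{\rm CP}(\rho)$ depends on clocks on $\mathcal P_{v_i}$. By the Rubin construction, these clocks split into the shared part on $\mathcal P_w$ and disjoint parts on the segments $\mathcal P_{v_1}\setminus\mathcal P_w$ and $\mathcal P_{v_2}\setminus\mathcal P_w$, which are mutually independent.

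\textbf{Step 1: conditioning on the crossing number.} Run $\X^{(v_i)}$ until $\tau_\rho^+(v_i)$, and let $F_i$ be the number of forward crossings of $e$ by $\X^{(v_i)}$ before $\tau^+_\rho(v_i)$. By the coupling, $F_1=F_2=:F$ on the common segment. Indeed, both extensions move identically on $\mathcal P_w$ until their first entry past $w$. Each excursion beyond $w$ returns to $w$ by recurrence on a finite path, and the law of the local times on $\mathcal P_w$ does not depend on which extension is used. Note $\{F\ge1\}=\{e\in\mathcal C_{\rm CP}(\rho)\}$. To make this precise, I would use Lemma~\ref{lem:forward-chain}: the law of $F$ is that of $Z_k$, and given $F=z$, the forward crossing sequence along the segment from $w$ to $v_i$ is the chain $Z$ started from $Z_k=z$. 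The $z$ successive excursions beyond $w$ use independent clocks on the two disjoint branches. Hence, conditionally on $F=z$, the events $\{e_1\text{ open}\}$ and $\{e_2\text{ open}\}$ are independent, each with probability $g_{n_i}(z):=\P(Z_{k+n_i}\ge1\mid Z_k=z)$. This requires checking that the excursion structure at $w$ (where the weight of $e$ is shared) yields the same $P^z(-1,\cdot)$ transitions on each branch; this is the delicate bookkeeping step.

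\textbf{Step 2: reduction to a moment inequality.} We obtain
\[
\P(e_1,e_2\text{ open}\mid e\text{ open})=\E\bigl[g_{n_1}(F)g_{n_2}(F)\mid F\ge1\bigr],\quad \P(e_i\text{ open}\mid e\text{ open})=\E[g_{n_i}(F)\mid F\ge1].
\]
By the time-homogeneity of $Z$, $g_n(1)=r_{n+1}\asymp n^{-1/2}$ from Proposition~\ref{thm:ruin-order}, and $g_n$ is nondecreasing in $z$ by monotone coupling. A union bound over the $z$ excursions, or the generalized ruin bound Proposition~\ref{prop:gen-ruin-upper} after identifying $g_n(z)$ with hitting $n$ before the $z$-th return on a path, gives $g_n(z)\le C\min(1,z/\sqrt n)$. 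Hence
\[
\E[g_{n_1}g_{n_2}(F)\mid F\ge1]\le C\,\E\Big[\min\Big(1,\tfrac{F}{\sqrt{n_1}}\Big)\min\Big(1,\tfrac{F}{\sqrt{n_2}}\Big)\,\Big|\,F\ge1\Big],
\]
while $\E[g_{n_i}(F)\mid F\ge1]\ge g_{n_i}(1)\ge c\,n_i^{-1/2}$.

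\textbf{Step 3: the main obstacle.} Step 2 reduces the claim to
\[
\E\bigl[\min(\sqrt{n_1},F)\min(\sqrt{n_2},F)\mid F\ge1\bigr]\le M'\min\bigl(\sqrt{n_1},\E[\min(\sqrt{n_1},F)\mid F\ge 1]\bigr)\cdots.
\]
Since $\P(F\ge1)=r_k\asymp k^{-1/2}$, Lemma~\ref{lem:forward-moments} gives $\E[F\mid F\ge1]\asymp\sqrt k$ and $\E[F^2\mid F\ge1]\le Ck$. In the regime $n_1,n_2\ge k$, the left side is $O(k)$. For the right side, we need the lower bound $g_n(z)\ge c\min(1,z/\sqrt n)$. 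This would follow by applying the harmonic function of Lemma~\ref{lem:harmonic} to the chain $Z$ (same asymptotic-zero-drift structure, $h(z)=z+O(1)$), together with an optional stopping estimate analogous to Lemma~\ref{lem:survival-x-over-sqrtn}. Then the right side is $\gtrsim \E[\min(\sqrt{n_i},F)]^2\gtrsim k$, using Paley--Zygmund with the first two moments. When some $n_i<k$, the corresponding min is $\sqrt{n_i}$ on most of the mass, and the bound again reduces to the comparison between the first and second conditional moments of $F$. I expect the hardest parts to be the matching lower bound on $g_n(z)$, and a two-sided control of $\E[F\mid F\ge1]$ that is uniform in $k$, including small $k$.
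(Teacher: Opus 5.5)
Your architecture is the paper's: condition on the crossing number $F=N_e$ of $e$, bound each branch by the generalized ruin estimate, and average using $\E[F\mid F\ge1]\le C\sqrt k$ and $\E[F^2\mid F\ge1]\le Ck$. However, Step~1 contains a false claim on which your Step~2 rests.

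\textbf{The conditional independence claim is false.} Conditionally on $F=z$, the events $\{e_1\text{ open}\}$ and $\{e_2\text{ open}\}$ are \emph{not} independent. Both extension processes $\X^{(v_1)}$ and $\X^{(v_2)}$ decide at $w$ between $w^{-1}$ and their own child $u_i$ using the \emph{same} clocks $\xi(w,w^{-1},\cdot)$. Given $F=z$, the number of entries into branch $i$ is $L_i=\#\{\ell:T_i(\ell)<T_0(z-1)\}$, where $T_i(\ell)=\sum_{q\le\ell}e^{2\beta q}\xi(w,u_i,q)$ and $T_0(z-1)=\sum_{q=0}^{z-1}e^{\beta(2q+1)}\xi(w,w^{-1},q)$ is common to both branches. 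So $L_1,L_2$, and hence the two events, are conditionally independent only given $T_0(z-1)$. Since both are increasing in $T_0(z-1)$, they are positively correlated given $F=z$. Consequently $\E[g_{n_1}(F)g_{n_2}(F)\mid F\ge1]$ is a \emph{lower} bound for the joint probability, not its value, and the marginal bound $g_n(z)\le C(1\wedge z/\sqrt n)$ cannot by itself control the joint term. The point you flagged as delicate, that each branch marginally sees the kernel $P^z(-1,\cdot)$, is actually fine; the dependence is what breaks.

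\textbf{The missing ingredient} is Lemma~\ref{lem:qj-upper}. Condition also on the clocks at $w$, so the branches become independent and
\[
\P\bigl(e_1,e_2\in\mathcal C_{\rm CP}(\rho)\mid N_e=j\bigr)=\E\bigl[r^{(L_1)}_{n_1}\,r^{(L_2)}_{n_2}\mid N_e=j\bigr].
\]
Then dominate $L_i$ by a variable independent of $N_e$ with the law of $j+1+\eta_j$ under $\eta_0=-1$. This yields the \emph{second}-moment bound $\E[L_i^2\mid N_e=j]\le Cj^2$. Finally, combine Proposition~\ref{prop:gen-ruin-upper} with Cauchy--Schwarz, $\E[L_1L_2\mid N_e=j]\le Cj^2$, to get $C(1\wedge j/\sqrt{n_1+1})(1\wedge j/\sqrt{n_2+1})$. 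Your proposal has no control of $L_i$ beyond its effect on the marginals, so this step is genuinely absent. Separately, a union bound over the $z$ excursions is not available for the TSAW, because the excursions are not identically distributed; it is Proposition~\ref{prop:gen-ruin-upper} that gives $1\wedge z/\sqrt n$.

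\textbf{Step~3 is unfinished and aims at an unnecessary target.} The displayed reduction is not a well-formed inequality. More importantly, you do not need a lower bound $g_n(z)\ge c(1\wedge z/\sqrt n)$, Paley--Zygmund, or a lower bound on $\E[F\mid F\ge1]$. The last of these does not follow from Lemma~\ref{lem:forward-moments}, which gives only upper bounds. Since $\{e_i\text{ open}\}\subseteq\{e\text{ open}\}$, one has exactly $\P(e_i\text{ open}\mid e\text{ open})=r_{k+n_i}/r_k$. The two-sided bound $r_n\asymp n^{-1/2}$ from Proposition~\ref{thm:ruin-order} then gives
\[
\P(e_i\text{ open}\mid e\text{ open})\ge c\sqrt{k/(k+n_i)}\ge c'\bigl(1\wedge\sqrt{k/n_i}\bigr).
\]
On the other side, once Step~1 is corrected, split $\sum_j\vartheta_e(j)(1\wedge j/\sqrt{n_1})(1\wedge j/\sqrt{n_2})$ into three cases: $n_1,n_2\le k$; $n_1\le k<n_2$; and $k<n_1\le n_2$. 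Using only the upper bounds $\sum_j j\,\vartheta_e(j)\le C\sqrt k$ and $\sum_j j^2\vartheta_e(j)\le Ck$ from Proposition~\ref{prop:mu-upper}, this gives $C\prod_i(1\wedge\sqrt{k/n_i})$, which closes the argument.
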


Fix $e_1, e_2\in E$ and $e=e_1\wedge e_2$.
Let $\widehat{\X}=(\widehat X_k)_{k\ge 0}$ be the extension process of $\X$ on the subtree $\mathcal{P}_{e_1}\cup \mathcal{P}_{e_2}$. We construct $\widehat{\X}$ using the same exponential random variables similarly as in Section \ref{sec:strong.constr}. 
Let
$$
\widehat\tau_\rho^+:=\inf\{k\ge1:\widehat X_k=\rho\}.
$$
be the first return time to $\rho$. 
For $e=\{e^{-},e^{+}\}\in E$ with $|e^{+}|=|e^{-}|+1$, let 
$$
N_e:=
\sum_{k=0}^{\widehat \tau_\rho^+-1}\mathbf{1}_{\{\widehat X_k=e^{-},\, \widehat X_{k+1}=e^{+}\}}.
$$
be the number of down-crossings from $e^-$ to $e^+$ by the first return time to $\rho$. 

\begin{lemma}
\label{lem:percolation-localtime}
For every $e\in E$,
$$
\bigl\{e\in \mathcal C_{\rm CP}(\rho)\bigr\}
=
\{N_e\ge 1\}.
$$
\end{lemma}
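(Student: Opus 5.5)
The plan is to compare both events along the single geodesic $\mathcal P_{e^+}$ and to use the coupling through the common exponential clocks $\boldsymbol\xi$. Write $v=e^+$. By definition, $e\in\mathcal C_{\rm CP}(\rho)$ means $\tau_v(v)<\tau_\rho^+(v)$ for the extension process $\X^{(v)}$ on $\mathcal P_v$. Likewise, $N_e\ge1$ means that $\widehat\X$ jumps from $e^-$ to $v$ before $\widehat\tau_\rho^+$, i.e.\ $\widehat\X$ hits $v$ before returning to $\rho$. Since $\X^{(v)}$ and $\widehat\X$ start at $\rho$ and always step away from $\rho$ at $\rho$, we need only show that the two processes agree up to the first time either of them hits $v$ or returns to $\rho$.

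\textbf{Step 1.} Let $\widehat\X$ be restricted to $\mathcal P_v$, as in Lemma~\ref{lem:rest} but now inside the subtree $\mathcal P_{e_1}\cup\mathcal P_{e_2}$ instead of $\mathcal T$. We argue that this restriction coincides with $\X^{(v)}$ up to its killing time. The reason is the same as in the restriction principle. At a vertex $x\in\mathcal P_v\setminus\{v\}$, the decision of $\widehat\X$ among the neighbours $x^{-1}$ and the child $x_j\in\mathcal P_v$ depends only on the clocks $\xi(x,x^{-1},\cdot)$ and $\xi(x,x_j,\cdot)$ and on the crossing counts of those two edges. The competing clocks on the other child (if $x$ is the branching vertex $e_1\wedge e_2$'s endpoint, or below $v$) only cause excursions off $\mathcal P_v$. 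Such excursions return to $x$ without changing the counts on $\mathcal P_v$. Rubin's construction is memoryless in this sense: the accumulated sums $T_n(x,y)$ for $y\in\mathcal P_v$ are not altered by excursions.

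\textbf{Step 2.} Before $\widehat\X$ first hits $v$, it is confined to $\mathcal P_v\setminus\{v\}$ together with no other branches. Indeed, every vertex of $\mathcal P_{e_1}\cup\mathcal P_{e_2}$ off $\mathcal P_v$ lies below $v$, because $e=e_1\wedge e_2$ is the last common edge. Hence no excursions off $\mathcal P_v$ occur before $\tau_v$. Consequently $\widehat X_k=X^{(v)}_k$ for all $k\le \min(\widehat\tau_v,\widehat\tau_\rho^+)$. The exceptional case $v=e_1$ or $v=e_2$ works the same way.

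\textbf{Step 3.} It follows that $\widehat\tau_v<\widehat\tau^+_\rho$ if and only if $\tau_v(v)<\tau^+_\rho(v)$. Moreover, the first entrance to $v$ happens through the edge $(e^-,v)$, so $\{N_e\ge1\}=\{\widehat\tau_v<\widehat\tau_\rho^+\}$. Together these give the claimed identity almost surely.

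The main obstacle is Step 2, the careful verification that the clocks used by $\widehat\X$ and $\X^{(v)}$ are literally the same random variables indexed by the same crossing numbers. This needs the indexing $\xi(x,y,k)$ with $k$ the crossing count of the oriented edge, together with the deterministic reflection rule at $v$, which is irrelevant before $\tau_v$.
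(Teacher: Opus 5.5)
Your proof is correct for the edge the lemma is actually applied to, $e=e_1\wedge e_2$, and in fact for every $e\le e_1\wedge e_2$. Your decisive step differs from the paper's.

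The paper uses only the content of your Step~1. On the finite subtree $\mathcal P_{e_1}\cup\mathcal P_{e_2}$, the process $\widehat\X$ visits $\mathcal P_e$ infinitely often. So by the restriction principle (Lemma~\ref{lem:rest}) the restriction of $\widehat\X$ to $\mathcal P_{e^+}$ is exactly $\X^{(e^+)}$, and both events are read off from this restricted path.

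Your Step~2 works differently. Every vertex of the subtree off $\mathcal P_{e^+}$ is a descendant of $e^+$, so no excursion off the path can occur before $\widehat\tau_{e^+}$. An induction on $k$ then gives the literal identity $\widehat X_k=X^{(e^+)}_k$ for $k\le\widehat\tau_{e^+}\wedge\widehat\tau_\rho^+$: at each $x\in\mathcal P_{e^+}\setminus\{e^+\}$ both processes use the same two clocks with the same crossing counts. This is more elementary and gives a pathwise identity with matching time indices. It makes Step~1 and the restriction principle unnecessary.

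What Step~2 does not cover is an edge lying strictly inside one of the two branches below $e_1\wedge e_2$, for instance $e=e_1$ when $e_1$ and $e_2$ branch. There $\widehat\X$ can wander into the other branch before reaching $e^+$, so step-by-step coincidence fails. For such edges you need your Step~1, which is the paper's argument, plus the observation that passing to the restriction preserves the order of the first hit of $e^+$ and the first return to $\rho$.

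So your remark that the exceptional case ``works the same way'' holds only for the degenerate configuration $e_1\wedge e_2\in\{e_1,e_2\}$, where the subtree is a single path. Also, for $e$ outside $\mathcal P_{e_1}\cup\mathcal P_{e_2}$ one has $N_e\equiv 0$, so ``every $e\in E$'' in the statement must be read as ``every edge of the subtree''.
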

\begin{proof}
By definition,
$e=\{e^-, e^+\}\in \mathcal C_{\rm CP}(\rho)$ if and only if
$\tau_{e^+}{(e^+)}<\tau_\rho^+{(e^+)}$, that is, the extension process $\X^{(e^+)}$ hits
$e^+$ before returning to $\rho$. As the subtree $\mathcal{P}_{e_1}\cup \mathcal{P}_{e_2}$ is finite, the extension process $\widehat \X$ visits $\mathcal{P}_e$ infinitely many times. By the restriction principle in
Lemma~\ref{lem:rest}, the extension process $\X^{(e^+)}$ coincides with the restriction of $\widehat \X$ to
$\mathcal P_e$. Hence the event $e\in \mathcal C_{\rm CP}(\rho)$ is exactly the event
that the first excursion of $\widehat \X$ from $\rho$ crosses the edge
$\{e^-,e^+\}$ downward at least once, that is, $N_e\ge 1$.
\end{proof}

\begin{proposition}
\label{prop:mu-upper}
For $j\ge 1$, let
$$
\vartheta_e(j):=
\P\bigl(N_e=j\mid N_e\ge 1\bigr).
$$
Then 
there exists a constant $C\in (0,\infty)$  such that
$$\sum_{j\ge1} j\,\vartheta_e(j)\le C\sqrt{|e|}
\quad\text{and}\quad
\sum_{j\ge1} j^2\,\vartheta_e(j)\le C|e|.$$
\end{proposition}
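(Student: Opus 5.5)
We reduce the statement to the forward local-time chain of Section~\ref{sec:forward-chain}. By Lemma~\ref{lem:percolation-localtime} and the restriction principle (Lemma~\ref{lem:rest}), the restriction of $\widehat{\X}$ to $\mathcal P_e$ up to its first return to $\rho$ has the law of the one-dimensional TSAW $\widetilde{\X}$ on $\{0,1,\dots,|e|\}$ up to $\tau_0^+$. Moreover, $N_e$ is exactly the number of forward crossings of the last edge of this path before $\tau_0^+$, i.e.\ $N_e\stackrel d= F(|e|,|e|)$. Excursions of $\widehat{\X}$ into the branch beyond $e^+$ (or into the other branch) are invisible to the restriction, so they do not change these counts. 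Lemma~\ref{lem:forward-chain} then gives $N_e\stackrel d=Z_n$ with $n=|e|$, and $\P(N_e\ge1)=\P(Z_n\ge1)=r_n$.

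With this identification we have
\[
\sum_{j\ge1}j\,\vartheta_e(j)=\frac{\E[Z_n]}{r_n},\qquad \sum_{j\ge1}j^2\,\vartheta_e(j)=\frac{\E[Z_n^2]}{r_n}.
\]
Lemma~\ref{lem:forward-moments} bounds the numerators by $C$ and $C\sqrt n$ respectively. For the denominator we need a matching lower bound $r_n\ge c\,n^{-1/2}$ for all $n\ge1$. Proposition~\ref{thm:ruin-order} gives $r_n\sim c_K n^{-1/2}$ with $c_K=\boldsymbol\pi_K(0)/(\sqrt\pi\,\gamma_K)$, which is positive since $\boldsymbol\pi_K(0)>0$ by irreducibility of $P_K$ and $\gamma_K<\infty$ by Proposition~\ref{prop:first-return-comparison}. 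Hence $r_n\ge c n^{-1/2}$ for $n\ge n_0$. For the finitely many $n<n_0$ we use $r_n>0$, since the path $0\to1\to\cdots\to n$ has positive probability. Dividing then yields $\E[N_e\mid N_e\ge1]\le C\sqrt{|e|}$ and $\E[N_e^2\mid N_e\ge1]\le C|e|$.

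The main point needing care is the identification $N_e\stackrel d=F(|e|,|e|)$. We must check that the relevant portion of $\widehat{\X}$, namely the restriction to $\mathcal P_e$ before the first return to $\rho$, is indeed a TSAW on $\mathcal P_e$ in which $e^+$ acts as a reflecting endpoint. This holds because the extension/restriction construction uses, at each vertex of $\mathcal P_e$, only the clocks of the two path edges. Excursions of $\widehat{\X}$ below $e^+$ always return to $e^+$, since the subtree is finite, and each one is followed by a move back to $e^-$, matching the deterministic jump $n\to n-1$. Once this is accepted, the rest is the one-line computation above.
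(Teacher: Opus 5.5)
Your proposal is correct and follows essentially the same route as the paper. You identify $N_e\stackrel d=F(n,n)\stackrel d=Z_n$ with $\P(N_e\ge1)=r_n$, then divide the moment bounds of Lemma~\ref{lem:forward-moments} by the lower bound $r_n\ge c\,n^{-1/2}$ from Proposition~\ref{thm:ruin-order}. Your extra remarks (positivity and finiteness of the limiting constant, and $r_n>0$ for small $n$, giving the lower bound uniformly in $n\ge1$) make explicit a step the paper states without comment.
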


\begin{proof}
Let $n:=|e|$. We first estimate
$
\P(N_e\ge 1).
$
Consider the restriction of $\widehat{\X}$ to the path $\mathcal P_e$. By the
restriction principle, this restriction process has the same law as the
one-dimensional TSAW on $\{0,1,\dots,n\}$ up to its first return to $0$, after
identifying $\rho$ with $0$ and $e^+$ with $n$. In particular, the crossings from $e^-$ to $e^+$ by $\widehat{\X}$ before the first return
to $\rho$ are exactly the forward crossings of the last edge $\{n-1,n\}$ by the
restriction process before the first return to $0$. Therefore
$$
\P(N_e\ge 1)=r_n.
$$
By Proposition~\ref{thm:ruin-order}, there exists a constant $C_1>0$ such that
\begin{equation}\label{eq:mu-denom}
\P(N_e\ge 1)\ge C_1\,n^{-1/2}.
\end{equation}

We now estimate the moments of $N_e$. Let
$$
F(n,n):=
\sum_{k=0}^{\tau_0^+-1}
\mathbf 1_{\{\widetilde X_k=n-1,\ \widetilde X_{k+1}=n\}}
$$
be the number of forward crossings of the last edge by the one-dimensional TSAW
on $\{0,1,\dots,n\}$ before the first return to $0$. By the restriction
principle, we note that
\begin{equation}\label{eq:Ne-F-law}
N_e\stackrel d=F(n,n).
\end{equation}
By Lemma~\ref{lem:forward-chain}, $F(n,n)$ has the same distribution as $Z_n$.
Consequently, for every integer $q\ge 1$,
\begin{equation}\label{eq:mu-num-moment}
\sum_{j\ge 1} j^q\,\P(N_e=j)
=
\E[Z_n^q].
\end{equation}
By Lemma~\ref{lem:forward-moments}, there exists a constant $C_2<\infty$ such
that
\begin{equation}\label{eq:fY-moment-used}
\E[ Z_n]\le C_2,\quad
\E[ Z_n^2]\le C_2\sqrt n.
\end{equation}
Combining \eqref{eq:mu-denom}, \eqref{eq:mu-num-moment}, and
\eqref{eq:fY-moment-used}, we obtain
\begin{align*}
\sum_{j\ge1} j\,\vartheta_e(j)
&=
\frac{\sum_{j\ge1} j\,\P(N_e=j)}{\P(N_e\ge1)}
\le
C\,\sqrt n,
\\
\sum_{j\ge1} j^2\,\vartheta_e(j)
&=
\frac{\sum_{j\ge1} j^2\,\P(N_e=j)}{\P(N_e\ge1)}
\le
C\,n.
\end{align*}
This completes the proof.
\end{proof}

\begin{lemma}\label{lem:qj-upper}
Let $e_1,e_2\in E$ with $e=e_1\wedge e_2$, and assume that $e_1$ and $e_2$
lie strictly below $e$ in two distinct descendant subtrees. 
Then, there exists a constant $C<\infty$ such that for all $j\ge1$,
$$
\P\bigl(e_1,e_2\in\mathcal C_{\rm CP}(\rho)\mid N_e=j\bigr)
\le
C
\left(1\wedge \frac{j}{\sqrt{|e_1|-|e|+1}}\right)
\left(1\wedge \frac{j}{\sqrt{|e_2|-|e|+1}}\right).
$$
\end{lemma}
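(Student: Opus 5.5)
The plan is to condition on the full trajectory of $\widehat{\X}$ restricted to $\mathcal P_e$ up to $\widehat\tau_\rho^+$, on the event $\{N_e=j\}$. We then show that the two branches below $e^+$ evolve, given this information, as conditionally independent one-dimensional TSAWs, each started at $e^+$ and allowed $j$ excursions from $e^+$.

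Write $v_i$ for the child of $e^+$ on $\mathcal P_{e_i}$. In Rubin's construction of Section~\ref{sec:strong.constr}, the moves out of vertices in the branch through $v_1$ use only the clocks $\xi(x,y,\cdot)$ with $x$ in that branch, and similarly for the branch through $v_2$. The choice made at $e^+$ among $e^-$, $v_1$ and $v_2$ depends on the clocks on the edges out of $e^+$ and on the current crossing counts of the three incident edges. Once the walk enters the branch through $v_i$, the excursion it performs there until it returns to $e^+$ is a deterministic function of the branch-$i$ clocks and of how many earlier excursions into that branch have occurred. The excursions into branch $i$ are therefore successive excursions of a single TSAW on the segment from $e^+$ down to $e_i^+$ (reflected at $e^+$), driven only by the branch-$i$ clocks.

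During the first excursion from $\rho$, the walk arrives at $e^+$ exactly $j$ times from $e^-$ and returns to $e^-$ at most $j$ times. The number of visits to $e^+$ that start an excursion into branch $i$ is some random number $J_i$. The essential point is that the number of excursions of the branch-$i$ segment walk from $e^+$ that occur before $\widehat\tau_\rho^+$ is at most the number of visits of $\widehat{\X}$ to $e^+$ during the first excursion. This number is not bounded by $j$ alone, because the walk bounces between $v_1$, $v_2$ and $e^+$.

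To handle this, I would reorganize using a \emph{single-branch} local time. Consider the restriction of $\widehat{\X}$ to $\mathcal P_{e_i}$, which by Lemma~\ref{lem:rest} is a one-dimensional TSAW. On $\{N_e=j\}$, its first excursion from $\rho$ contains exactly $j$ forward crossings of $e$. Reading this path from $e^+$ downwards, the portion below $e^+$ consists of at most $j$ excursions of the one-dimensional TSAW from the vertex $e^+$, seen as the left end of the segment from $e^+$ down to $e_i^+$ (of length $n_i:=|e_i|-|e|$). The clocks relevant to the portion below $v_i$ belong to branch $i$ only. The event $e_i\in\mathcal C_{\rm CP}(\rho)$ then becomes the event that this segment walk hits its far endpoint before its $j$-th return to the start. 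By Proposition~\ref{prop:gen-ruin-upper}, this has probability at most $C(1\wedge j/\sqrt{n_i})$, and $1\wedge j/\sqrt{n_i}\le 2(1\wedge j/\sqrt{n_i+1})$.

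It remains to obtain the product, and this step is the main obstacle. The obstacle is that the weight at $e^+$ couples the branches: the law of how many of the $j$ visits go to each branch depends on both sets of crossing counts. My approach would be to dominate by allowing each branch its full budget of $j$ excursions. The event $\{e_1,e_2\in\mathcal C_{\rm CP}\}\cap\{N_e=j\}$ is contained in $\{R_1(j)\}\cap\{R_2(j)\}$, where $R_i(j)$ is the event that the branch-$i$ segment TSAW, built from the branch-$i$ clocks only, hits $e_i^+$ within its first $j$ excursions.

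The step to justify most carefully is the claim that the segment walk visits $e^+$ at most $j$ times. Bouncing $v_1\to e^+\to v_2$ creates extra visits to $e^+$. I would instead count returns to the parent side: in the restriction to $\mathcal P_{e_i}$, the branch-2 detours are erased, so the visits to $e^+$ are counted with the correct weights. The events $R_1(j)$ and $R_2(j)$ are measurable with respect to disjoint families of clocks, and hence independent of each other. The remaining difficulty is that $\{N_e=j\}$ is itself correlated with these clocks. I would remove this correlation by working with the upper branch clocks instead of conditioning on them, bounding $\P(R_1(j)\cap R_2(j)\cap\{N_e=j\})\le \P(N_e=j)\,\P(R_1(j))\,\P(R_2(j))$ only after first conditioning on the clocks above $e^+$ and the clocks at $e^+$. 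If this independence fails because $N_e$ depends on the lower clocks through the weights at $e^+$, I would fall back on monotonicity in $j$ together with a union over the values of $J_1$ and $J_2$, both of which are at most $j$.
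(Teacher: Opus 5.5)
Your skeleton is the right one: segment walks in the two branches driven by disjoint clock families, followed by Proposition~\ref{prop:gen-ruin-upper}. But it rests on a false claim, namely that on $\{N_e=j\}$ branch $i$ receives at most $j$ excursions from $e^+$ before $\widehat\tau_\rho^+$. That claim is what gives you $\{e_1,e_2\in\mathcal C_{\rm CP}(\rho)\}\cap\{N_e=j\}\subset R_1(j)\cap R_2(j)$. Erasing the other branch does not repair it. In the restriction to $\mathcal P_{e_i}$, the walk at $e^+$ may go down to $v_i$, come back to $e^+$, and go down to $v_i$ again, several times, before it leaves towards $e^-$.

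For example, take $j=1$ and $n_i\ge 2$. With positive probability branch $i$ is entered twice, $e_i^+$ is reached only during the second excursion, and $e_{3-i}^+$ is also reached. So the containment fails. Your fallback via $J_1,J_2\le j$ uses the same false bound. In fact, given $N_e=j$, the number $L_i$ of crossings $e^+\to v_i$ before $\widehat\tau_\rho^+$ is distributed as $j+1+\eta_j$ with $\eta_0=-1$ (the forward kernel of Lemma~\ref{lem:forward-chain}). It is of order $j$ but unbounded.

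The missing idea is to carry this random budget through the argument. In Rubin's construction, the moves out of $e^+$ are obtained by merging the clock sequences $T_0(\ell)=\sum_{q\le\ell}\xi(e^+,e^-,q)/w(2q+1)$ and $T_i(\ell)=\sum_{q\le\ell}\xi(e^+,v_i,q)/w(2q)$. Hence on $\{N_e=j\}$ one has $L_i=\overline L_i:=\#\{\ell\ge0: T_i(\ell)<T_0(j-1)\}$, a function of the clocks at $e^+$ alone.

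The correlation you were worried about is absent. By Lemma~\ref{lem:rest}, $N_e$ is determined by $\X^{(e^+)}$, which jumps deterministically from $e^+$ to $e^-$. It therefore uses only clocks at vertices of $\mathcal P_e$ other than $e^+$. So $N_e$, the clocks at $e^+$, and the two branch families are mutually independent.

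Conditioning on $\{N_e=j\}$ and on the clocks at $e^+$ gives $\P(e_1,e_2\in\mathcal C_{\rm CP}(\rho)\mid N_e=j)=\E[r^{(L_1)}_{n_1}r^{(L_2)}_{n_2}\mid N_e=j]$. Proposition~\ref{prop:gen-ruin-upper} then bounds each factor by $C(1\wedge L_i/\sqrt{n_i+1})$. To conclude you need three further ingredients:
\begin{itemize}
\item the moment bounds $\E[\overline L_i]\le Cj$ and $\E[\overline L_i^2]\le Cj^2$, from Lemma~\ref{lem:expconv} with initial state $-1$;
\item a case split according to whether $n_i+1\le j^2$;
\item Cauchy--Schwarz for $\E[L_1L_2\mid N_e=j]$ when both $n_i+1>j^2$, because $L_1$ and $L_2$ are dependent through the common clock $T_0(j-1)$.
\end{itemize}
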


\begin{proof}
For $i\in \{1,2\}$, set
$
m_i:=|e_i|-|e|+1
$ and write the unique path from $e^+$ to
$e_i^+$ as 
$(u_{i,1},u_{i,2},\dots,u_{i,m_i})$ with $u_{i,1}=e^+$ and $u_{i,m_i}=e_i^+$. Let $L_i$ be the number of crossings from $e^+$ to $u_{i,2}$ by $\widehat{\X}$ before time $\widehat\tau_\rho^+$. 

Fix $j\ge 1$. We now work on the event $\{N_e=j\}$, i.e. the number of
crossings from $e^+$ to $e^-$ by $\widehat{\X}$ before $\widehat\tau_\rho^+$
is exactly $j$. We first estimate the conditional moments of $L_i$ on this
event. By the restriction principle, the restriction of $\widehat{\X}$ to
$\mathcal P_e$ coincides with the extension process $\X^{(e^+)}$ up to the
first return to $\rho$. In particular, $N_e$ is also equal to the number of
crossings by $\X^{(e^+)}$ from $e^+$ to $e^-$ before its first return to
$\rho$. Since $e^+$ is the endpoint of the path $\mathcal P_e$, right after
each visit to $e^+$, the extension process $\X^{(e^+)}$ jumps from $e^+$ to
$e^-$ deterministically. Therefore, by the strong construction of
$\X^{(e^+)}$, $N_e$ is independent of the exponential variables
$
\bigl\{\xi(e^+,y,\ell): y\sim e^+,\ \ell\ge0\bigr\}
$
and of the exponential variables with first coordinate in
$
\{u_{1,2},\dots,u_{1,m_1}\}\cup\{u_{2,2},\dots,u_{2,m_2}\}.
$
Fix $i\in\{1,2\}$. For $\ell\ge0$, define
$$
T_0(\ell):=
\sum_{q=0}^{\ell}
\frac{\xi(e^+,e^-,q)}{w(2q+1)}\quad\text{and}\quad
T_i(\ell):=
\sum_{q=0}^{\ell}
\frac{\xi(e^+,u_{i,2},q)}{w(2q)}.
$$
Let
$$
\overline L_i:=
\sum_{\ell\ge0}\mathbf 1_{\{T_i(\ell)<T_0(j-1)\}},
$$
which is equal to the number of crossings from $e^+$ to
$u_{i,2}$ before the $j$-th crossing from $e^+$ to $e^-$ when only the two
oriented edges $(e^+,e^-)$ and $(e^+,u_{i,2})$ are kept, with the same
exponential clocks on these two oriented edges. Since the crossing order
from $e^+$ is obtained by ordering the clock times corresponding to all
oriented edges with tail $e^+$, the number of crossings from $e^+$ to
$u_{i,2}$ before the $j$-th crossing by $\widehat{\X}$ from $e^+$ to $e^-$ is
at most $\overline L_i$. Hence, on $\{N_e=j\}$, we have
$$
L_i\le \overline L_i.
$$
Moreover, $\overline L_i$ is independent of $\{N_e=j\}$.

By the same argument as in Lemma~\ref{lem:forward-chain}, we have
$$
\P(\overline L_i=y)=P^j(-1,y-j-1)
\quad\text{for each } y\ge0.
$$
Equivalently, if $(\eta_n)$ is the Markov chain with transition probability matrix $P$ which starts from $-1$, then
$\overline L_i$ has the same distribution as $j+1+\eta_j$. By
Lemma~\ref{lem:expconv} and the remark following it, applied with initial
state $-1$, there exists a constant $C_1<\infty$ such that, for all $j\ge1$,
$$
\E[\overline L_i]\le C_1 j\quad\text{and} \quad\E[\overline L_i^2]\le C_1 j^2.
$$
Consequently,
\begin{equation}\label{eq:Li-moments}
\E[L_i\mid N_e=j]\le C_1j,\quad
\E[L_i^2\mid N_e=j]\le C_1j^2\quad\text{for } i=1,2.
\end{equation}

By definition of $\mathcal C_{\rm CP}(\rho)$ and by the restriction principle,
for $i\in\{1,2\}$, the event $\{e_i\in\mathcal C_{\rm CP}(\rho)\}$ is the event
that the extension process $\X^{(e_i^+)}$ reaches $e_i^+$ before its first
return to $\rho$, which is the same as the event that the restriction of
$\widehat{\X}$ to $\mathcal P_{e_i^+}$ reaches $e_i^+$ before
$\widehat\tau_\rho^+$.

We now condition on the event $\{N_e=j\}$ and on the exponential variables
$\{\xi(e^+,y,\ell):y\sim e^+,\ell\ge0\}$. Under this conditioning, the values
of $L_1$ and $L_2$ are determined. Moreover, the exponential variables with
first coordinate in
$\{u_{1,2},\dots,u_{1,m_1}\}$ and the exponential variables with first
coordinate in
$\{u_{2,2},\dots,u_{2,m_2}\}$ are independent. For fixed $i\in\{1,2\}$, under the conditioning above, the restriction of
$\widehat{\X}$ to the path $(e^+,u_{i,2},\dots,u_{i,m_i})$ has exactly $L_i$
crossings from $e^+$ to $u_{i,2}$ before time $\widehat\tau_\rho^+$. Hence the
event that the restriction of $\widehat{\X}$ to $\mathcal P_{e_i^+}$ reaches
$e_i^+$ before $\widehat\tau_\rho^+$ is the same as the event that the
restriction of $\widehat{\X}$ to $(e^+,u_{i,2},\dots,u_{i,m_i})$ reaches
$e_i^+$ before its $L_i$-th return to $e^+$. By the restriction principle,
after identifying $e^+$ with $0$ and $e_i^+$ with $m_i-1$, the conditional
probability of this event is equal to $r_{m_i-1}^{(L_i)}$, where we use the
convention that $r_m^{(0)}:=0$.
Therefore
\begin{align}
&\P\bigl(e_1,e_2\in\mathcal C_{\rm CP}(\rho)\mid N_e=j\bigr) =
\E\left[
r_{m_1-1}^{(L_1)}r_{m_2-1}^{(L_2)}
\,\middle|\, N_e=j
\right].
\label{eq:qj-ruin-bound}
\end{align}
By Proposition~\ref{prop:gen-ruin-upper}, we have, for all $\ell\ge0$,
$$
r_{m_i-1}^{(\ell)}
\le
C_2\left(1\wedge \frac{\ell}{\sqrt{m_i}}\right),
\quad i\in \{1,2\}.
$$
It thus follows from \eqref{eq:qj-ruin-bound} that
\begin{align}
&\P\bigl(e_1,e_2\in\mathcal C_{\rm CP}(\rho)\mid N_e=j\bigr) \le
C_2\,
\E\left[
\left(1\wedge \frac{L_1}{\sqrt{m_1}}\right)
\left(1\wedge \frac{L_2}{\sqrt{m_2}}\right)
\,\middle|\, N_e=j
\right].
\label{eq:qj-L-bound}
\end{align}

It remains to estimate the last expectation. Without loss of generality assume
$m_1\le m_2$.

If $m_1\le j^2$ and $m_2\le j^2$, then the expectation in
\eqref{eq:qj-L-bound} is at most $1$, while
$$
\left(1\wedge \frac{j}{\sqrt{m_1}}\right)
\left(1\wedge \frac{j}{\sqrt{m_2}}\right)=1.
$$

If $m_1\le j^2<m_2$, then
$$
\left(1\wedge \frac{L_1}{\sqrt{m_1}}\right)
\left(1\wedge \frac{L_2}{\sqrt{m_2}}\right)
\le
\frac{L_2}{\sqrt{m_2}}.
$$
Using \eqref{eq:Li-moments}, we get
$$
\E\left[
\left(1\wedge \frac{L_1}{\sqrt{m_1}}\right)
\left(1\wedge \frac{L_2}{\sqrt{m_2}}\right)
\,\middle|\, N_e=j
\right]
\le
C_1\frac{j}{\sqrt{m_2}}= C_1\left(1\wedge \frac{j}{\sqrt{m_1}}\right)
\left(1\wedge \frac{j}{\sqrt{m_2}}\right).
$$

Finally, suppose that $j^2<m_1\le m_2$. Then
$$
\left(1\wedge \frac{L_1}{\sqrt{m_1}}\right)
\left(1\wedge \frac{L_2}{\sqrt{m_2}}\right)
\le
\frac{L_1L_2}{\sqrt{m_1m_2}}.
$$
By H\"older's inequality and \eqref{eq:Li-moments},
$$
\E[L_1L_2\mid N_e=j]
\le
\Bigl(\E[L_1^2\mid N_e=j]\E[L_2^2\mid N_e=j]\Bigr)^{1/2}
\le
C_1 j^2.
$$
Thus
$$
\E\left[
\left(1\wedge \frac{L_1}{\sqrt{m_1}}\right)
\left(1\wedge \frac{L_2}{\sqrt{m_2}}\right)
\,\middle|\, N_e=j
\right]
\le
C_1\frac{j^2}{\sqrt{m_1m_2}}=C_1\left(1\wedge \frac{j}{\sqrt{m_1}}\right)
\left(1\wedge \frac{j}{\sqrt{m_2}}\right)
$$

Combining the three cases with \eqref{eq:qj-L-bound}, we obtain the desired
estimate.
\end{proof}

\begin{proof}[Proof of Proposition \ref{prop:per}]
Fix two edges $e_1,e_2\in E$, and let
$e:=e_1\wedge e_2$ be the last common edge of $\mathcal P_{e_1}$ and $\mathcal P_{e_2}$. If
$e=e_1$ or $e=e_2$, then \eqref{eq:edge-qi-final} is immediate, since for
example when $e=e_1$,
$$
\P\bigl(e_1,e_2\in \mathcal C_{\rm CP}(\rho)\mid e\in \mathcal C_{\rm CP}(\rho)\bigr)
=
\P\bigl(e_2\in \mathcal C_{\rm CP}(\rho)\mid e\in \mathcal C_{\rm CP}(\rho)\bigr),
$$
while
$
\P\bigl(e_1\in \mathcal C_{\rm CP}(\rho)\mid e\in \mathcal C_{\rm CP}(\rho)\bigr)=1.
$
Thus it remains to consider the case where both $e_1$ and $e_2$ lie strictly
below $e$ in two distinct descendant subtrees of $e^+$. Set
$$
n:=|e|,
\quad
m_i:=|e_i|-|e|+1\in\N
\quad\text{for } i\in \{1,2\}.
$$
By Lemma~\ref{lem:percolation-localtime},
$
\{e\in\mathcal C_{\rm CP}(\rho)\}=\{N_e\ge1\}.
$
Therefore, using the law of total probability,
\begin{align}
&\P\bigl(e_1,e_2\in \mathcal C_{\rm CP}(\rho)\mid e\in \mathcal C_{\rm CP}(\rho)\bigr)  =
\sum_{j\ge1}\vartheta_e(j)\,
\P\bigl(e_1,e_2\in \mathcal C_{\rm CP}(\rho)\mid N_e=j\bigr).
\label{eq:branch-mixture}
\end{align}

By Lemma~\ref{lem:qj-upper}, there exists a constant $C_1<\infty$ such that
\begin{equation}\label{eq:qj-upper-per}
\P\bigl(e_1,e_2\in \mathcal C_{\rm CP}(\rho)\mid N_e=j\bigr)
\le
C_1
\Bigl(1\wedge \frac{j}{\sqrt{m_1}}\Bigr)
\Bigl(1\wedge \frac{j}{\sqrt{m_2}}\Bigr)
\quad\text{for all } j\ge1.
\end{equation}
Also, by Proposition~\ref{prop:mu-upper}, there exists a constant $C_2<\infty$ such that
\begin{equation}\label{eq:mu-upper-per}
\sum_{j\ge1} j\,\vartheta_e(j)\le C_2\sqrt{n}
\quad\text{and}\quad
\sum_{j\ge1} j^2\,\vartheta_e(j)\le C_2n.
\end{equation}
Substituting \eqref{eq:qj-upper-per} into \eqref{eq:branch-mixture}, we obtain
\begin{align}
&\P\bigl(e_1,e_2\in \mathcal C_{\rm CP}(\rho)\mid e\in \mathcal C_{\rm CP}(\rho)\bigr)\le
C_1
\sum_{j\ge 1}
\vartheta_e(j)
\Bigl(1\wedge \frac{j}{\sqrt{m_1}}\Bigr)
\Bigl(1\wedge \frac{j}{\sqrt{m_2}}\Bigr).
\label{eq:sum-to-bound}
\end{align}

We now estimate the sum on the right-hand side. Without loss of generality,
assume that $m_1\le m_2$.

\smallskip
\textit{Case 1: $m_1\le n$ and $m_2\le n$.}
Since
$
\Bigl(1\wedge \frac{j}{\sqrt{m_1}}\Bigr)
\Bigl(1\wedge \frac{j}{\sqrt{m_2}}\Bigr)\le 1
$, the sum is bounded by
$
C_1\sum_{j\ge 1}
\vartheta_e(j)=C_1.
$
Therefore,
\begin{equation}\label{eq:sum-case1}
\P(e_1,e_2\in\mathcal C_{\rm CP}(\rho)\mid e\in\mathcal C_{\rm CP}(\rho))
\le C_1
\prod_{i=1}^2\Bigl(1\wedge \sqrt{\frac{n}{m_i}}\Bigr).
\end{equation}

\smallskip
\textit{Case 2: $m_1\le n< m_2$.}
Using the fact that
$
\Bigl(1\wedge \frac{j}{\sqrt{m_1}}\Bigr)
\Bigl(1\wedge \frac{j}{\sqrt{m_2}}\Bigr)\le \frac{j}{\sqrt{m_2}}
$
and \eqref{eq:mu-upper-per}, we obtain
\begin{equation}\label{eq:sum-case2}
\P(e_1,e_2\in\mathcal C_{\rm CP}(\rho)\mid e\in\mathcal C_{\rm CP}(\rho))
\le
\frac{C_1}{\sqrt{m_2}}
\sum_{j\ge1} j\,\vartheta_e(j)\le C_3 \sqrt{\frac{n}{m_2}}=
C_3
\prod_{i=1}^2\Bigl(1\wedge \sqrt{\frac{n}{m_i}}\Bigr).
\end{equation}

\smallskip
\textit{Case 3: $n< m_1\le m_2$.}
Using the fact that
$
\Bigl(1\wedge \frac{j}{\sqrt{m_1}}\Bigr)
\Bigl(1\wedge \frac{j}{\sqrt{m_2}}\Bigr)\le \frac{j^2}{\sqrt{m_1m_2}}
$
and \eqref{eq:mu-upper-per}, we obtain
\begin{equation}\label{eq:sum-case3}
\P(e_1,e_2\in\mathcal C_{\rm CP}(\rho)\mid e\in\mathcal C_{\rm CP}(\rho))
\le \frac{C_1}{\sqrt{m_1m_2}} \sum_{j\ge1} j^2\,\vartheta_e(j) \le C_4\,\frac{n}{\sqrt{m_1m_2}}
=
C_4
\prod_{i=1}^2\Bigl(1\wedge \sqrt{\frac{n}{m_i}}\Bigr).
\end{equation}

Combining \eqref{eq:sum-case1}, \eqref{eq:sum-case2}, and
\eqref{eq:sum-case3}, we conclude from \eqref{eq:sum-to-bound} that
\begin{equation}\label{eq:joint-upper-product-form}
\P\bigl(e_1,e_2\in \mathcal C_{\rm CP}(\rho)\mid e\in \mathcal C_{\rm CP}(\rho)\bigr)
\le
C_5
\prod_{i=1}^2\Bigl(1\wedge \sqrt{\frac{n}{m_i}}\Bigr).
\end{equation}

It remains to compare the right-hand side with the product of $\P(e_i\in \mathcal C_{\rm CP}(\rho)\mid e\in \mathcal C_{\rm CP}(\rho))$ for $i\in \{1,2\}$. Since $e_i$ is below $e$, we have
$
\{e_i\in \mathcal C_{\rm CP}(\rho)\}\subseteq \{e\in \mathcal C_{\rm CP}(\rho)\}
$
for $i\in \{1,2\}$,
and hence
\begin{equation}\label{eq:cond-marginal-ratio}
\P\bigl(e_i\in \mathcal C_{\rm CP}(\rho)\mid e\in \mathcal C_{\rm CP}(\rho)\bigr)
=
\frac{\P(e_i\in \mathcal C_{\rm CP}(\rho))}{\P(e\in \mathcal C_{\rm CP}(\rho))}.
\end{equation}
By Proposition~\ref{thm:ruin-order}, there
exist constants $c_1>0$ and $C_6<\infty$ such that for all $f\in E$,
$$
c_1\,|f|^{-1/2}\le \P(f\in \mathcal C_{\rm CP}(\rho))=r_{|f|}\le C_6\,|f|^{-1/2}.
$$
Since $|e_i|=n+m_i-1$, applying this to \eqref{eq:cond-marginal-ratio}, we obtain
\begin{align}
\P\bigl(e_i\in \mathcal C_{\rm CP}(\rho)\mid e\in \mathcal C_{\rm CP}(\rho)\bigr)
&\ge
c_2\,\sqrt{\frac{n}{n+m_i-1}}
\quad \text{for }i\in \{1,2\}.
\label{eq:marginal-lower}
\end{align}
Finally, for every $m\ge 1$ and $n\ge 1$, we note that
$1\wedge \sqrt{\frac{n}{m}}
\le
\sqrt{2}\,\sqrt{\frac{n}{n+m-1}}$. Combining this inequality with \eqref{eq:joint-upper-product-form} and
\eqref{eq:marginal-lower}, we get
\begin{align*}
\P\bigl(e_1,e_2\in \mathcal C_{\rm CP}(\rho)\mid e\in \mathcal C_{\rm CP}(\rho)\bigr)
&\le
C_7
\prod_{i=1}^2 \sqrt{\frac{n}{n+m_i-1}}\\
&\le
M\,
\P\bigl(e_1\in \mathcal C_{\rm CP}(\rho)\mid e\in \mathcal C_{\rm CP}(\rho)\bigr)
\P\bigl(e_2\in \mathcal C_{\rm CP}(\rho)\mid e\in \mathcal C_{\rm CP}(\rho)\bigr),
\end{align*}
for some constant $M<\infty$ independent of $e_1, e_2$. This proves
\eqref{eq:edge-qi-final}.
\end{proof}

We now combine the asymptotic behavior of ruin probabilities with the quasi-independent percolation criterion to prove the main theorem.

\begin{proof}[Proof of Theorem~\ref{thm:main}]

By Lemma~\ref{per2.lem},
\[
\P(\tau_\rho^+=\infty)
=
\P\bigl(|\mathcal C_{\rm CP}(\rho)|=\infty\bigr),
\]
where $\mathcal C_{\rm CP}(\rho)$ is the cluster of the root in the ruin
percolation. Consequently, the TSAW is transient if and only if the cluster
$\mathcal C_{\rm CP}(\rho)$ is infinite with positive probability, and recurrent
if and only if it is almost surely finite.

By Proposition~\ref{prop:per}, the ruin percolation is quasi‑independent, so the
criteria of Proposition~\ref{lyons1989} apply. Fix an edge $e=\{e^{-},e^{+}\}\in E$ with $|e|\ge 1$. By definition,
\[
\mathbf Q(\rho\leftrightarrow e^{+})
=
\P\bigl(\tau_{e^{+}}(e^{+})<\tau_\rho^+(e^{+})\bigr).
\]
Since $\X^{(e^{+})}$ has the same law as TSAW on $\{0,1,2,\cdots,|e|\}$, it follows that
$$\mathbf Q(\rho\leftrightarrow e^{+})=r_{|e|},$$
where we recall that $r_n$ is the ruin probability that the TSAW on $\{0, 1,2,\cdots, n\}$  hits $n$ before returning to $0$. 
By Proposition~\ref{thm:ruin-order}, there exists a constant $c_*\in (0,\infty)$ such that $r_n\sim c_*\,n^{-1/2}$ as $n\to\infty$. Hence,
\begin{equation}\label{eq:Q-asymptotic}
\mathbf Q(\rho\leftrightarrow e^{+})=\P\bigl(\tau_{e^{+}}<\tau_\rho^+\bigr)
\sim c_* |e|^{-1/2}.
\end{equation}

Recall the definition of the adapted conductances:
\[
c(e)
=
\begin{cases}
1, & |e|=1,\\[1ex]
\dfrac{\mathbf Q(\rho\leftrightarrow e^{+})}
      {\mathbf Q(e\ \text{is closed}\mid \rho\leftrightarrow e^{-})},
& |e|>1.
\end{cases}
\]
Since
\[
\mathbf Q(e\ \text{is closed}\mid \rho\leftrightarrow e^{-})
=
1-
\frac{\mathbf Q(\rho\leftrightarrow e^{+})}
     {\mathbf Q(\rho\leftrightarrow e^{-})},
\]
and by \eqref{eq:Q-asymptotic},
\[
\frac{\mathbf Q(\rho\leftrightarrow e^{+})}
     {\mathbf Q(\rho\leftrightarrow e^{-})}
=
1 - \frac{1}{2|e|}+o(|e|^{-1}),
\]
we obtain
\[
\mathbf Q(e\text{ is closed}\mid \rho\leftrightarrow e^-)
=
\frac{1}{2|e|}+o(|e|^{-1}).
\]
Combining this with \eqref{eq:Q-asymptotic}, we get
\begin{equation}\label{eq:c-asymptotic}
c(e)\sim 2c_*\,|e|^{1/2}.
\end{equation}

We distinguish the following two cases:

\medskip

\textbf{Case 1: when ${\rm br}_r(\Tcal)<\tfrac12$.}
Assume ${\rm br}_r(\Tcal)<\frac12$.
Choose $\gamma$ such that $
{\rm br}_r(\Tcal)<\gamma<\tfrac12.$ By the definition of the branching‑ruin number, 
\[
\inf_{\pi\in\Pi}\sum_{e\in\pi}|e|^{-\gamma}=0.
\]
Since $|e|^{-1/2}\le |e|^{-\gamma}$ for $\gamma<\tfrac12$, and using
\eqref{eq:Q-asymptotic}, we have
\[
\inf_{\pi\in\Pi}\sum_{e\in\pi}\mathbf Q(\rho\leftrightarrow e^{+})
=
0.
\]
By Proposition~\ref{lyons1989}(i), $\mathbf Q(\rho\leftrightarrow \infty)=0$ and the cluster $\mathcal{C}_{\rm CP}(\rho)$ is thus almost surely finite. Lemma~\ref{per2.lem} therefore
implies that the TSAW is a.s. recurrent.

\medskip

\textbf{Case 2: when ${\rm br}_r(\Tcal)>\tfrac12$.}
Assume ${\rm br}_r(\Tcal)>\frac12$, and fix $\gamma$ such that $
\tfrac12<\gamma<{\rm br}_r(\Tcal)$.
By the max-flow min-cut Theorem, there exists a non-zero flow $(\theta(e))_{e\in E}$ such that
$
\theta(e)\le |e|^{-\gamma}.
$
By \eqref{eq:c-asymptotic}, we also have $
c(e)\sim 2c_*\,|e|^{1/2}$. Hence 
$$\sup_{v\in V}\sum_{e\in \mathcal{P}_v}\frac{\theta(e)}{c(e)}\le C \sum_{n=1}^{\infty}\frac{1}{n^{\gamma+1/2}}<\infty.$$
Hence
\[
\sum_{e\in E}\frac{\theta(e)^2}{c(e)}\le \int_{\partial \Tcal} V_{\theta}(\xi)\rmd \mathfrak{m}_{\theta}(\xi) < \infty.
\]
where $V_{\theta}(\xi):=\sum_{e\in \xi} \frac{\theta(e)}{c(e)}$ for each $\xi\in \partial \Tcal$ and $\mathfrak{m}_\theta$ is the harmonic measure induced by flow $\theta$ on $\partial \Tcal$ (see e.g., Proposition 16.1 in \cite{LP2016}).
Therefore, by
Proposition~\ref{lyons1989}(ii), we get
$\mathbf Q(\rho\leftrightarrow \infty)>0$, and the cluster $\mathcal{C}_{\rm CP}(\rho)$ is thus infinite with positive probability.
Lemma~\ref{per2.lem} therefore
implies that the TSAW is a.s. transient.
\end{proof}

\section*{Acknowledgment} 
The author would like to thank Andrea Collevecchio for mentioning this problem. Tuan-Minh Nguyen was partially supported by the Australian Research Council under grant ARC DP230102209.

\bibliography{refs}
\bibliographystyle{plain}
\end{document}